\pgfplotsset{compat=1.18}
\tikzstyle{black}=[fill=black, draw=black, shape=circle]
\tikzstyle{red}=[-, draw=red]
\tikzstyle{orange}=[-, draw={rgb,255: red,255; green,137; blue,2}]
\tikzstyle{yellow}=[-, draw={rgb,255: red,207; green,190; blue,0}]
\tikzstyle{green}=[-, draw={rgb,255: red,25; green,255; blue,140}]
\tikzstyle{cyan}=[-, draw=cyan]
\tikzstyle{blue}=[-, draw={rgb,255: red,0; green,64; blue,255}]
\tikzstyle{purple}=[-, draw={rgb,255: red,128; green,0; blue,255}]
\tikzstyle{magenta}=[-, draw={rgb,255: red,255; green,0; blue,191}]
\newtheorem{theorem}{Theorem}[section]
\newtheorem{lemma}[theorem]{Lemma}
\newtheorem{corollary}[theorem]{Corollary}
\newtheorem{proposition}[theorem]{Proposition}
\newtheorem{conjecture}[theorem]{Conjecture}
\theoremstyle{definition}
\renewcommand{\left}{\mleft}
\renewcommand{\right}{\mright}
\renewcommand{\P}{\mathbb{P}}
\newcommand{\E}{\mathbb{E}}
\newcommand{\N}{\mathbb{N}}
\newcommand{\R}{\mathbb{R}}
\newcommand{\G}{\mathbb{G}}
\newcommand{\cG}{\mathcal{G}}
\newcommand{\cB}{\mathcal{B}}
\renewcommand{\c}{\mathsf{c}}
\newcommand{\Var}{\operatorname{Var}}
\newcommand{\Cov}{\operatorname{Cov}}
\newcommand{\TV}{\mathrm{TV}}
\renewcommand{\d}{\mathbf{d}}
\newcommand{\ind}[1]{\mathbf{1}_{\{#1\}}}
\newcommand{\fr}{\frac}
\newcommand{\eps}{\varepsilon}
\newcommand{\Exp}[1]{\exp\left(#1\right)}
\newcommand{\Econd}[2]{\E\left[ #1 \,\middle|\, #2 \right]}
\newcommand{\Pcond}[2]{\P\left[ #1 \,\middle|\, #2 \right]}
\newcommand\ind*[1]{\mathbf{1}_{#1}}
\newcommand{\sse}{\subseteq}
\newcommand{\allball}{B_\eta^\square(p^*)}
\newcommand{\ball}{B_\eta^\square}
\newcommand{\allhalfball}{B_{\eta/2}^\square(p^*)}
\newcommand{\halfball}{B_{\eta/2}^\square}
\newcommand{\dtv}{\d_\TV}
\renewcommand{\dh}{\d_\mathrm{H}}
\newcommand{\dl}{\d_\Lambda}
\newcommand{\dlb}{\d_{\bar{\Lambda}}}
\newcommand{\db}{\d_\square}
\newcommand{\dto}{\overset{d}{\longrightarrow}}
\newcommand{\edgeset}{\binom{[n]}{2}}
\let\temp\phi
\let\phi\varphi
\let\varphi\temp
\title{Concentration via metastable mixing, with applications to the supercritical exponential random graph model}
\author{Vilas Winstein \\ University of California, Berkeley \\ \texttt{vilas@berkeley.edu}}
\begin{document}

\maketitle

\begin{abstract}
Folklore belief holds that metastable wells in low-temperature statistical mechanics models exhibit high-temperature behavior.
We make this rigorous in the exponential random graph model (ERGM) through the lens of concentration of measure.
We make use of the supercritical (low-temperature) \emph{metastable mixing} which was recently proven
by Bresler, Nagaraj, and Nichani \cite{bresler2024metastable} and obtain a novel concentration inequality for Lipschitz
observables of the ERGM in a large metastable well, answering a question posed \cite{bresler2024metastable}.
To achieve this, we prove a new connectivity property for metastable mixing in the ERGM
and introduce a new general result yielding concentration inequalities, which extends a result of Chatterjee
\cite{chatterjee2005concentration}.
We also use a result of Barbour, Brightwell, and Luczak \cite{barbour2022long} to cover all cases of interest.
Our work extends a result of Ganguly and Nam \cite{ganguly2024sub} from the subcritical (high-temperature) regime to metastable
wells, and we also extend applications of this concentration, namely a central limit theorem for small
subcollections of edges and a bound on the Wasserstein distance between the ERGM and the Erd\H{o}s--R\'enyi random graph.
Finally, to supplement the mathematical content of the article, we present a simulation study of metastable wells
in the supercritical ERGM.
\end{abstract}

\setcounter{tocdepth}{2}
\tableofcontents

\section{Introduction}
\label{sec:intro}

Exponential random graph models (ERGMs) are exponential families of random graph models.
In the present article, we focus on the case where the sufficient statistics are the counts of certain small subgraphs.
In other words, for us the ERGM is a Gibbs measure generalizing the Erd\H{o}s--R\'enyi model, where the probability of a graph
is tilted according to the counts of particular subgraphs.
For instance, the presence of triangles in the graph
may be encouraged or discouraged, depending on the parameters of the model.

More precisely, given a fixed sequence of finite graphs $G_i = (V_i, E_i)$ for $0 \leq i \leq K$ (where we always take
$G_0$ to be a single edge), and a vector of parameters $\bm\beta = (\beta_0, \dotsc, \beta_K) \in \R^{1+K}$, the ERGM
is a probability distribution $\tilde{\mu}$ on the space of simple graphs $x$ on $n$ vertices given by $\tilde{\mu}(x) \propto \Exp{H(x)}$,
where the \emph{Hamiltonian} $H$ is defined as
\begin{equation}
    H(x) = \sum_{i=0}^K \fr{\beta_i}{n^{|V_i|-2}} N_{G_i}(x).
\end{equation}
Here $N_G(x)$ denotes the number of \emph{labeled} subgraphs of $x$ isomorphic to $G$.
The normalization by $n^{|V_i|-2}$ ensures that all terms in the Hamiltonian are of the same order, namely $n^2$, which
is the exponential order of the total number of simple graphs on $n$ vertices.
When $K = 0$, so that the only graph in the ERGM specification is a single edge,
one recovers the Erd\H{o}s--R\'enyi model $\cG(n,p)$ with $p = \fr{e^{2 \beta_0}}{1 + e^{2 \beta_0}}$.

The ERGM has been used as a model of complex networks with various clustering properties, and has found applications in fields like
sociology and biology \cite{frank1986markov,holland1981exponential,wasserman1994social}.
Early nonrigorous analysis of the ERGM was also contributed by statistical physicists
\cite{burda2004network,park2004solution,park2005solution}.
From a mathematical perspective, the ERGM stands out as one of the simplest natural models which incorporates higher-order
(i.e.\ not just pairwise) interactions, since the presence of a subgraph isomorphic to $G = (V,E)$ depends on $|E|$ different edges.
As such, the ERGM has attracted the attention of probabilists, statisticians, and computer scientists who have investigated the model
from a variety of perspectives including
large deviations and phase transitions \cite{chatterjee2013estimating,radin2013phaseComplex,radin2013phaseERGM},
distributional limit theorems \cite{mukherjee2023statistics,fang2024normal},
mixing times and sampling \cite{bhamidi2008mixing,demuse2019mixing,reinert2019approximating,bresler2024metastable},
concentration of measure \cite{sambale2020logarithmic,ganguly2024sub}, and stochastic localization
\cite{eldan2018decomposition,eldan2018exponential,mikulincer2024stochastic}.
Moreover, the ERGM is useful as a tool for studying random graphs \emph{constrained} to have a certain abnormal
number of triangles or other subgraphs \cite{radin2014asymptotics,lubetzky2015replica}.
Our definition of the ERGM gives dense graphs, but some recent work has also studied
modifications which yield sparse graphs instead \cite{yin2017asymptotics,cook2024typical}.
This list of citations is meant only to display the breadth of the subject and is not comprehensive; many important
works have been omitted.

As with many other Gibbs measures, the macroscopic behavior of the ERGM can be explained via a large deviations principle
which characterizes the system to first order.
Large deviations principles, however, leave open the question of the microscopic behavior, i.e.\ the fluctuations of
various observables of the system at finite size, in the same way that the standard law of large numbers
\begin{equation}
    \sum_{i=1}^n Y_i = n \cdot \E[Y_1] + o(n)
\end{equation}
for i.i.d.\ square-integrable random variables $(Y_i)_{i=1}^\infty$
gives no indication that the error term $o(n)$ is typically of order $\sqrt{n}$.
Since we do not have direct control over the correlations,
it is often a delicate matter to extract fluctuation information from a Gibbs measure, and there are many cases where
this is still open.

Broadly speaking, the present article is about the order of fluctuations of certain observables of the ERGM
in the supercritical (low-temperature) regime of parameters.
This regime, which will be defined in the next subsection, has
been largely unexplored as compared with the subcritical (high-temperature) regime.

\subsection{Macroscopic and microscopic behavior}
\label{sec:intro_macroscopic}

In \cite{chatterjee2013estimating}, a large deviations principle for the ERGM was demonstrated, using the language of \emph{graphons},
which are limit points of finite graphs in a certain metric space which has been completed with respect to the \emph{cut distance}.
These notions will be defined precisely in Section \ref{sec:review_ldp} below.
The large deviations principle of \cite{chatterjee2013estimating} says that a sample from the ERGM is typically close (in the cut distance)
to the set of maximizers of a certain functional $\mathcal{F}$ on the space of graphons.

In the present work, we focus on the \emph{ferromagnetic} regime where $\beta_i \geq 0$ for all $i \geq 1$ (note that we still allow
$\beta_0$ to be negative).
In this regime, the large deviations principle of \cite{chatterjee2013estimating} simplifies, and it can be shown that the maximizers
of the functional $\mathcal{F}$ are all \emph{constant graphons} $W_p$ for $p \in [0,1]$, which are the limit points of Erd\H{o}s--R\'enyi
random graphs $\cG(n,p)$.
When restricted to constant graphons, the functional $\mathcal{F}$ can be written as
\begin{equation}
\label{eq:Ldef}
    \mathcal{F}(W_p) =
    L_{\bm\beta}(p) \coloneqq \sum_{i=0}^K \beta_i p^{|E_i|} - I(p),
\end{equation}
where $I(p) = \tfrac{1}{2} p \log p + \tfrac{1}{2} (1-p) \log(1-p)$.
In essence, then,  the large deviations principle states that a sample from the ferromagnetic ERGM
is typically close in cut distance to a sample from $\cG(n,p^*)$ for some $p^* \in M_{\bm\beta}$,
the set of global maximizers of $L_{\bm\beta}$ (see Figure \ref{fig:ergm_pstar}).

\begin{figure}
\begin{center}
    \includegraphics[align=c,scale=0.15]{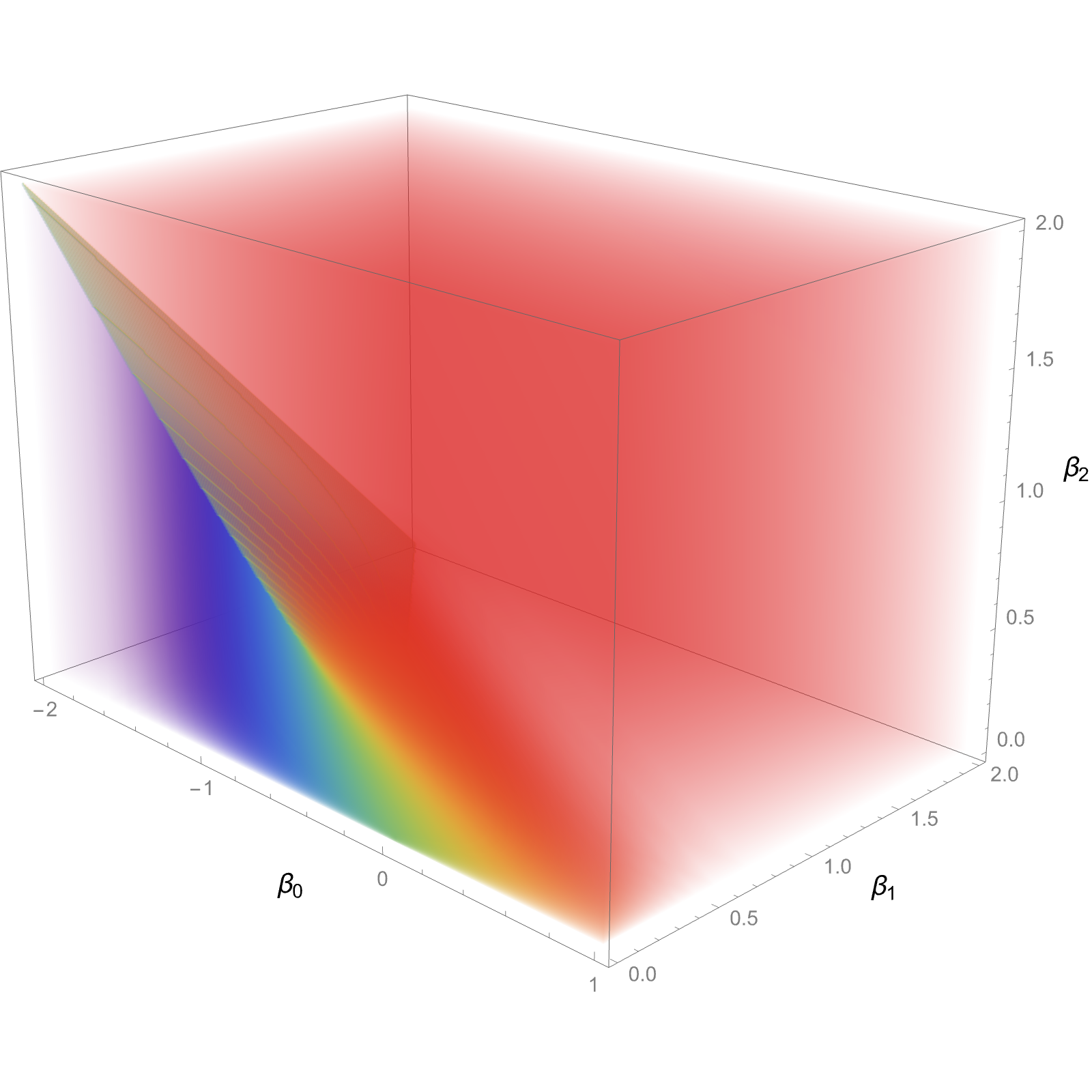}
    \hspace{5mm}
    \includegraphics[align=c,scale=0.15]{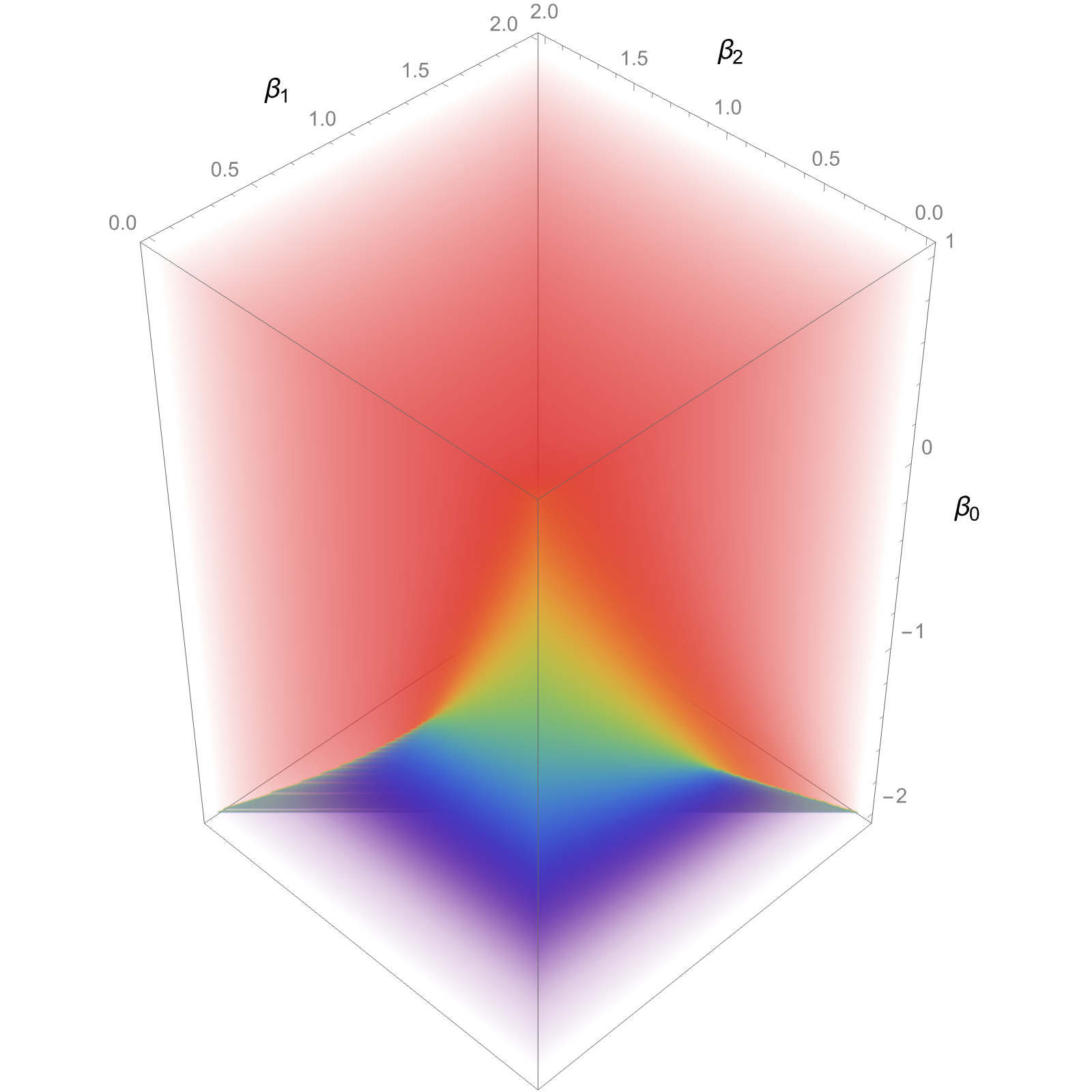}
    \hspace{5mm}
    \includegraphics[align=c,scale=0.15]{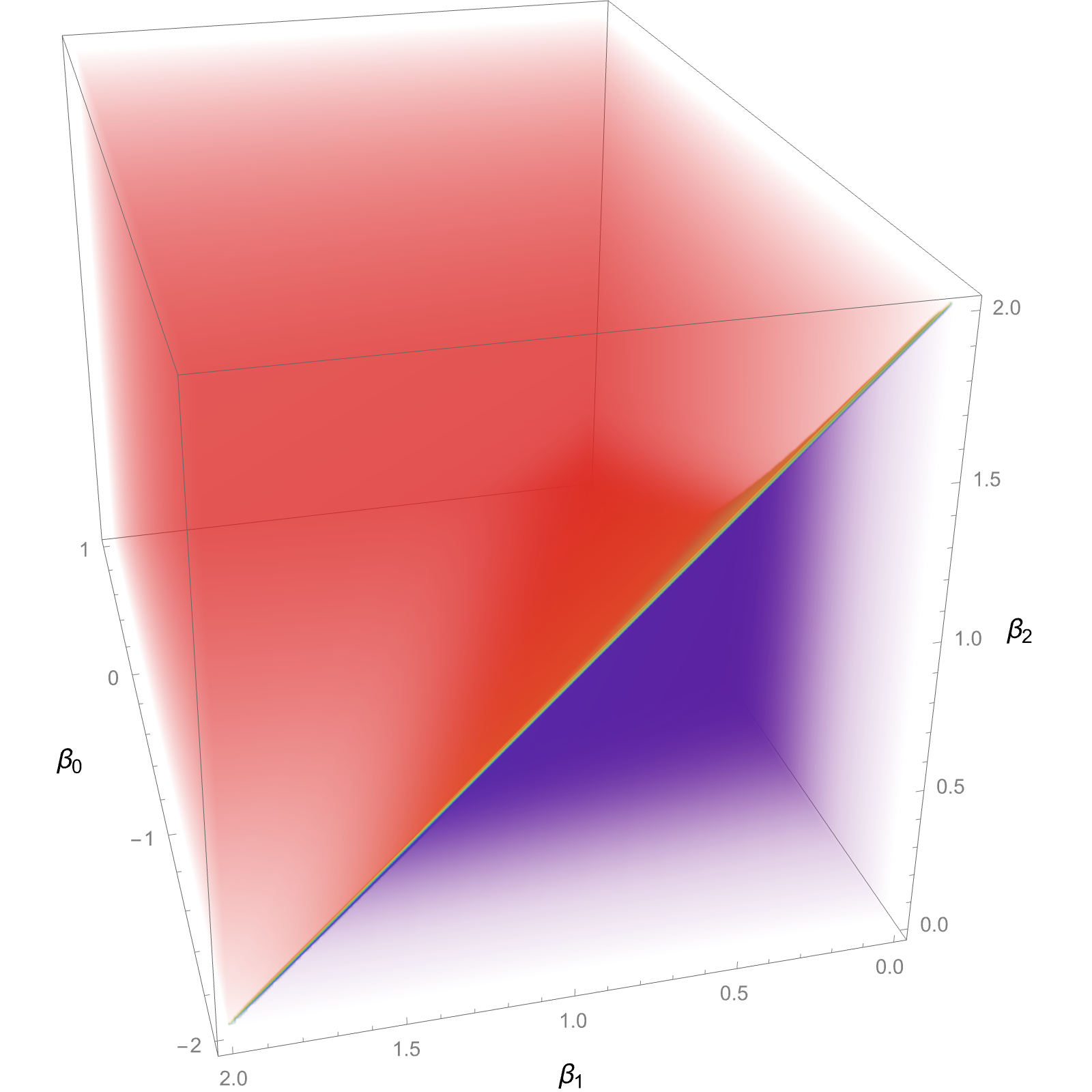}
    \hspace{5mm}
    \includegraphics[align=c,scale=0.4]{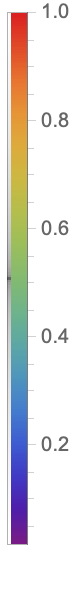}
\end{center}
\caption{Three views of a numerical plot of the maximizer $p^*$ of $L_{\bm\beta}$, as a function of $\beta_0 \in [-2,1]$ and
$\beta_1, \beta_2 \in [0,2]$. The graphs chosen for this ERGM are $G_1 = $ two-star (or disjoint union of two edges),
and $G_2 = $ triangle (or any other graph on $3$ edges, as all give the same values of $p^*$).
The opacity of $p^*$ values near $0$ and $1$ has been decreased so that the interior detail can be seen:
notice the two-dimensional surface across which there is a first-order phase transition.}
\label{fig:ergm_pstar}
\end{figure}

For reasons which will become clear in Section \ref{sec:review} below, the behavior of the ERGM depends not just on the global maximizers
but also the local maximizers of $L_{\bm\beta}$.
We will say that $\bm\beta$ lies in the \emph{subcritical} or \emph{high-temperature} regime of parameters if $L_{\bm\beta}$
has a unique local maximizer $p^* \in [0,1]$, and if $L_{\bm\beta}''(p^*) < 0$ (see Figure \ref{fig:ergm_regimes}, top row).
On the other hand, $\bm\beta$ is said to be in the \emph{supercritical} or \emph{low-temperature} regime if $L_{\bm\beta}$ has
at least two local maximizers $p^* \in [0,1]$ with $L_{\bm\beta}''(p^*) < 0$ (see Figure \ref{fig:ergm_regimes}, bottom row).
Values of $\bm\beta$ outside of these two regimes are said to be \emph{critical}.

\begin{figure}
\begin{center}
    \includegraphics[align=c,scale=0.15]{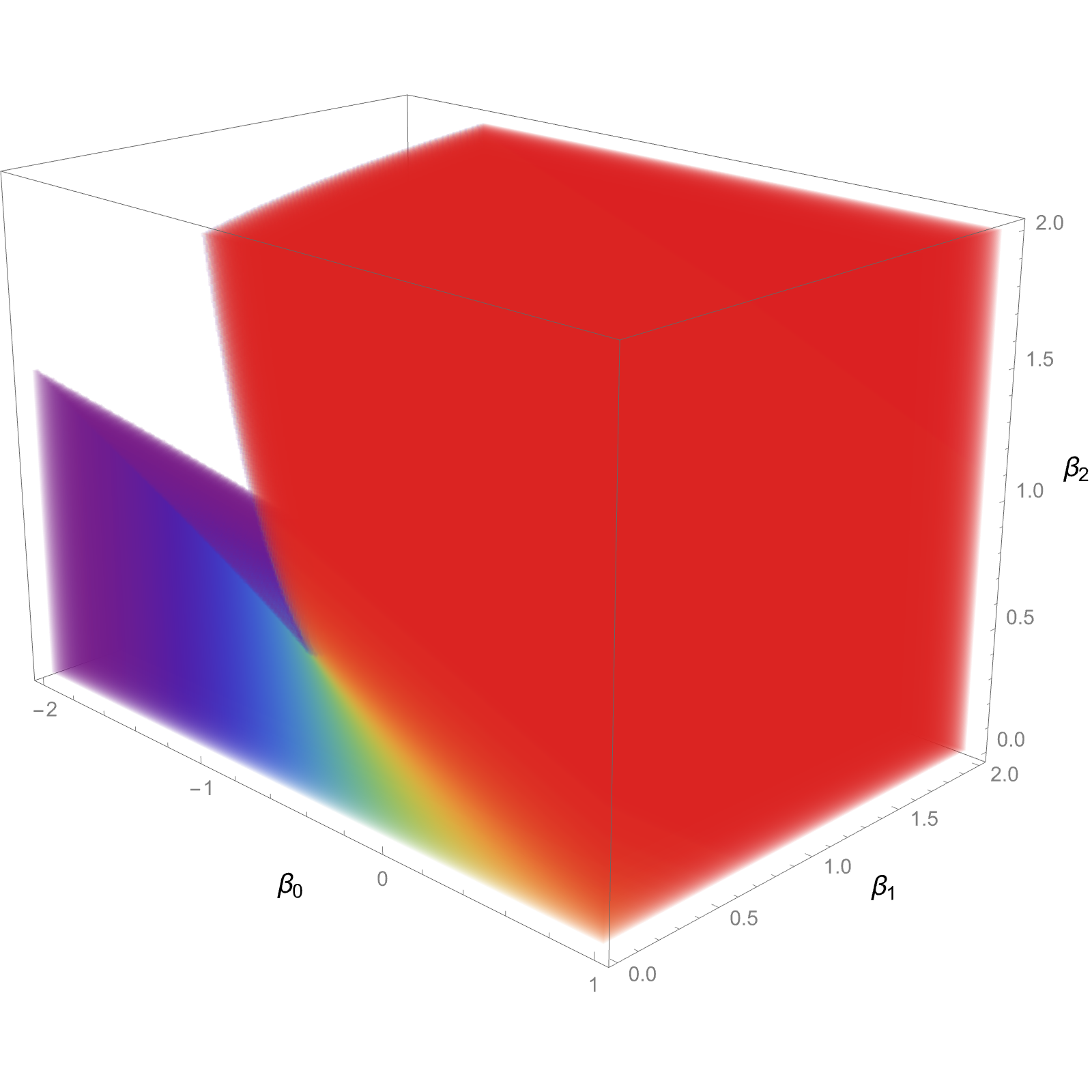}
    \hspace{5mm}
    \includegraphics[align=c,scale=0.15]{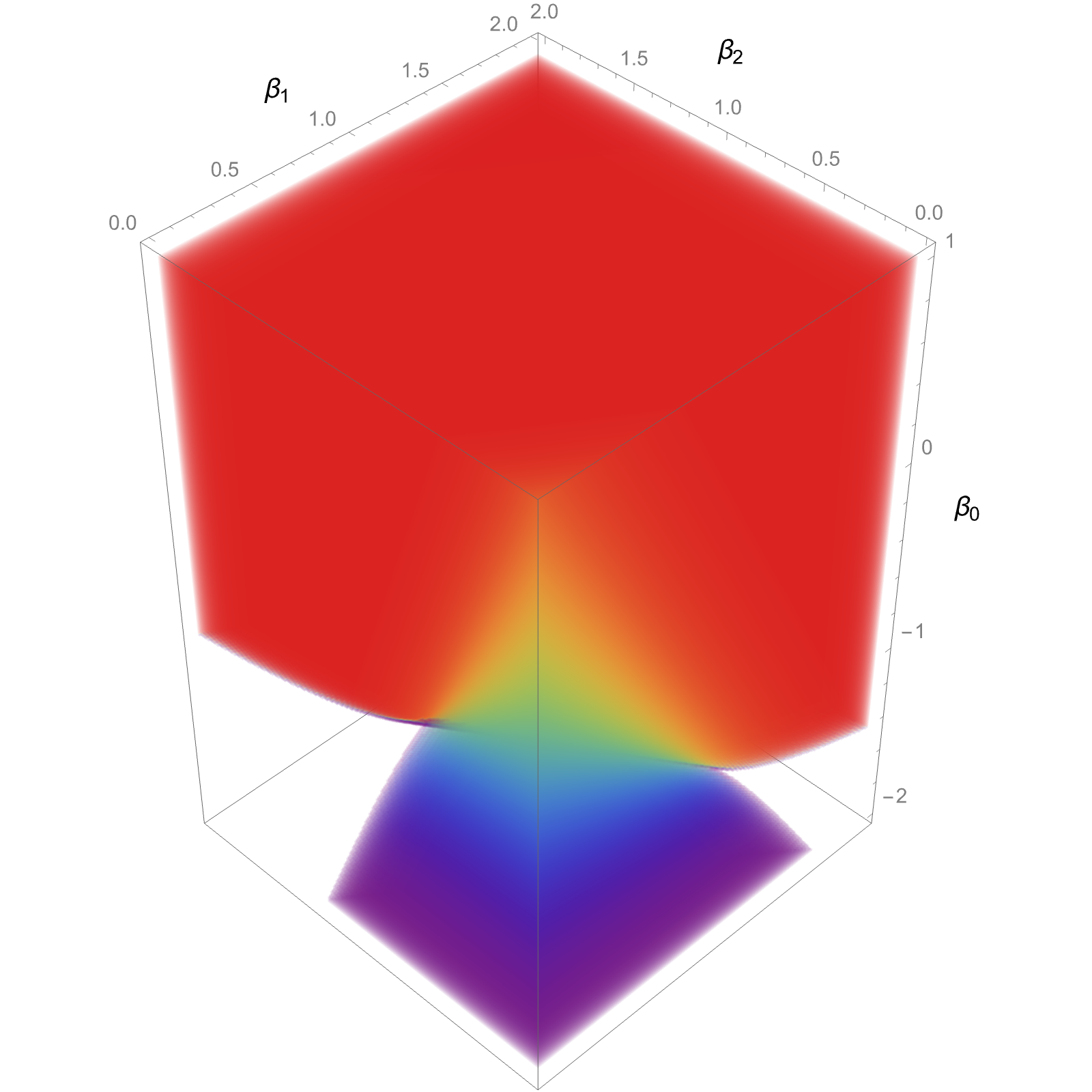}
    \hspace{5mm}
    \includegraphics[align=c,scale=0.15]{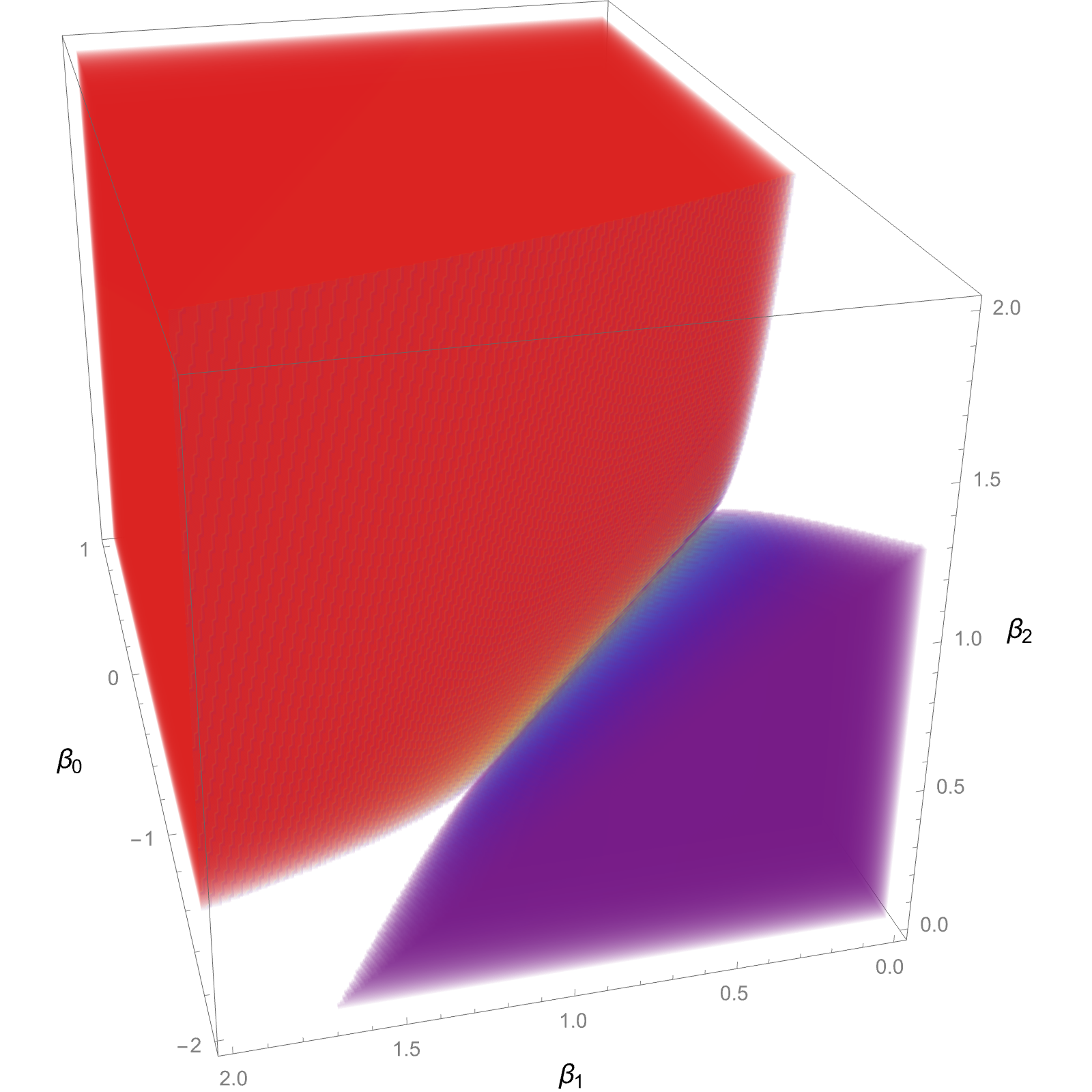}
    \hspace{5mm}
    \includegraphics[align=c,scale=0.4]{figures/ergm/colorbar.png}
\end{center}
\begin{center}
    \includegraphics[align=c,scale=0.15]{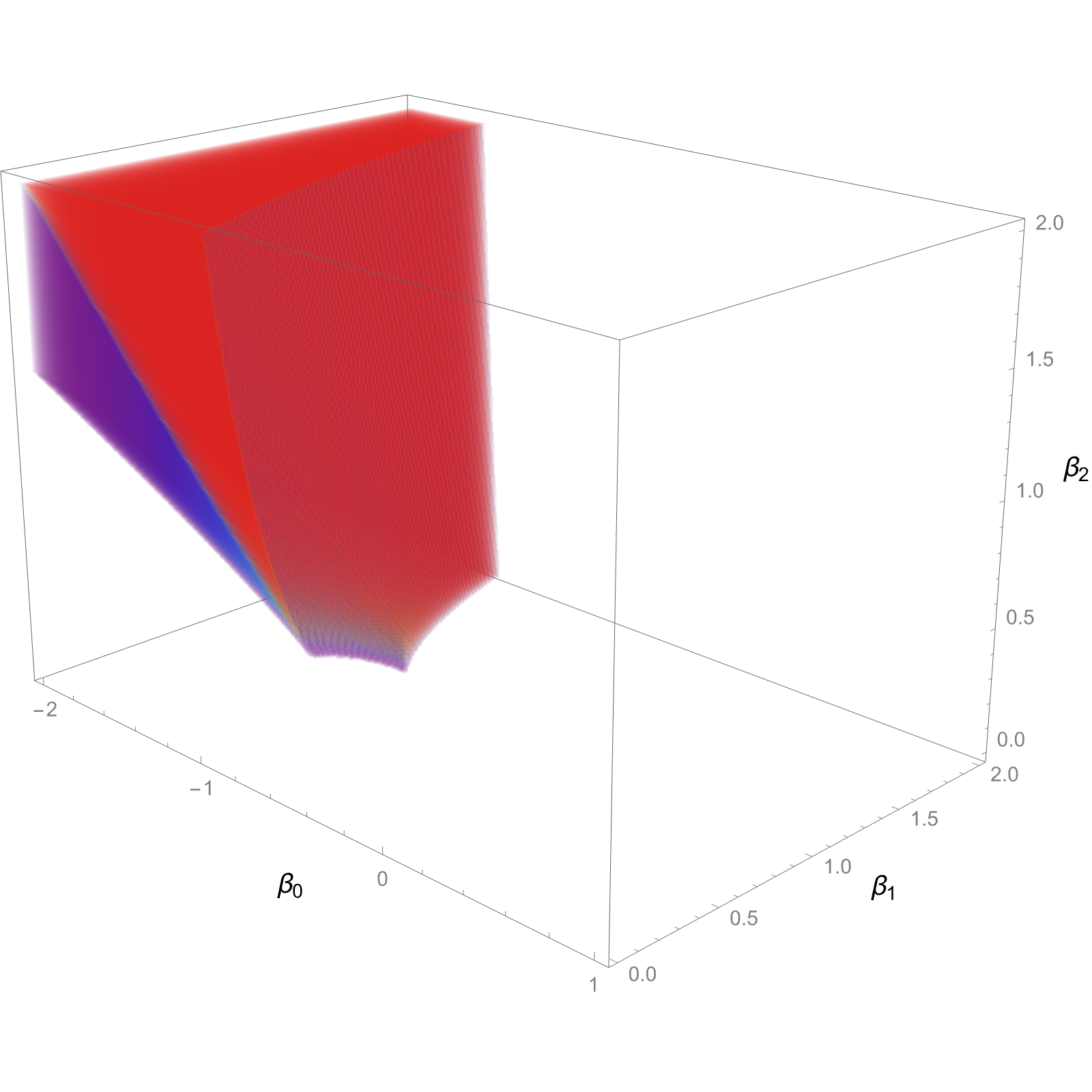}
    \hspace{5mm}
    \includegraphics[align=c,scale=0.15]{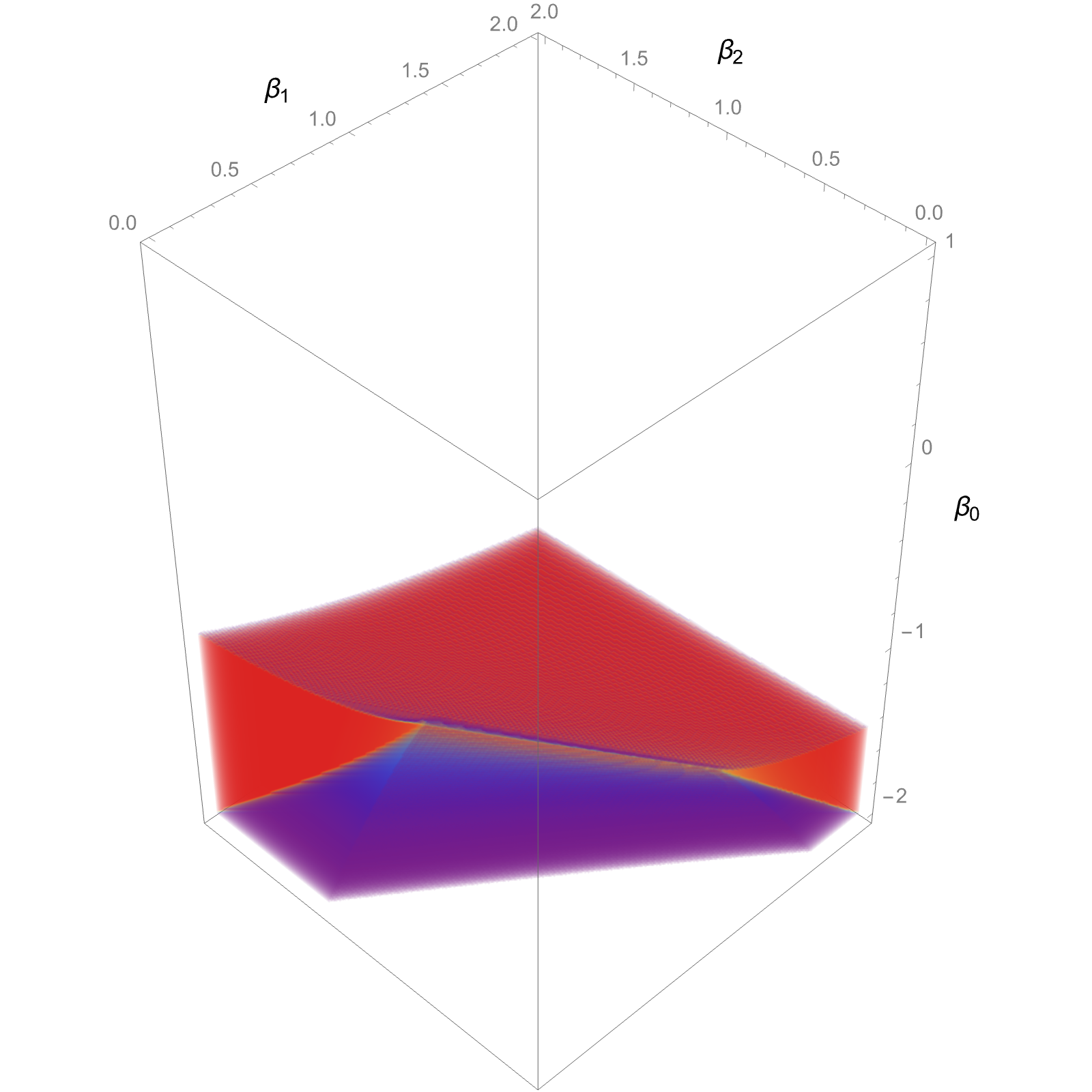}
    \hspace{5mm}
    \includegraphics[align=c,scale=0.15]{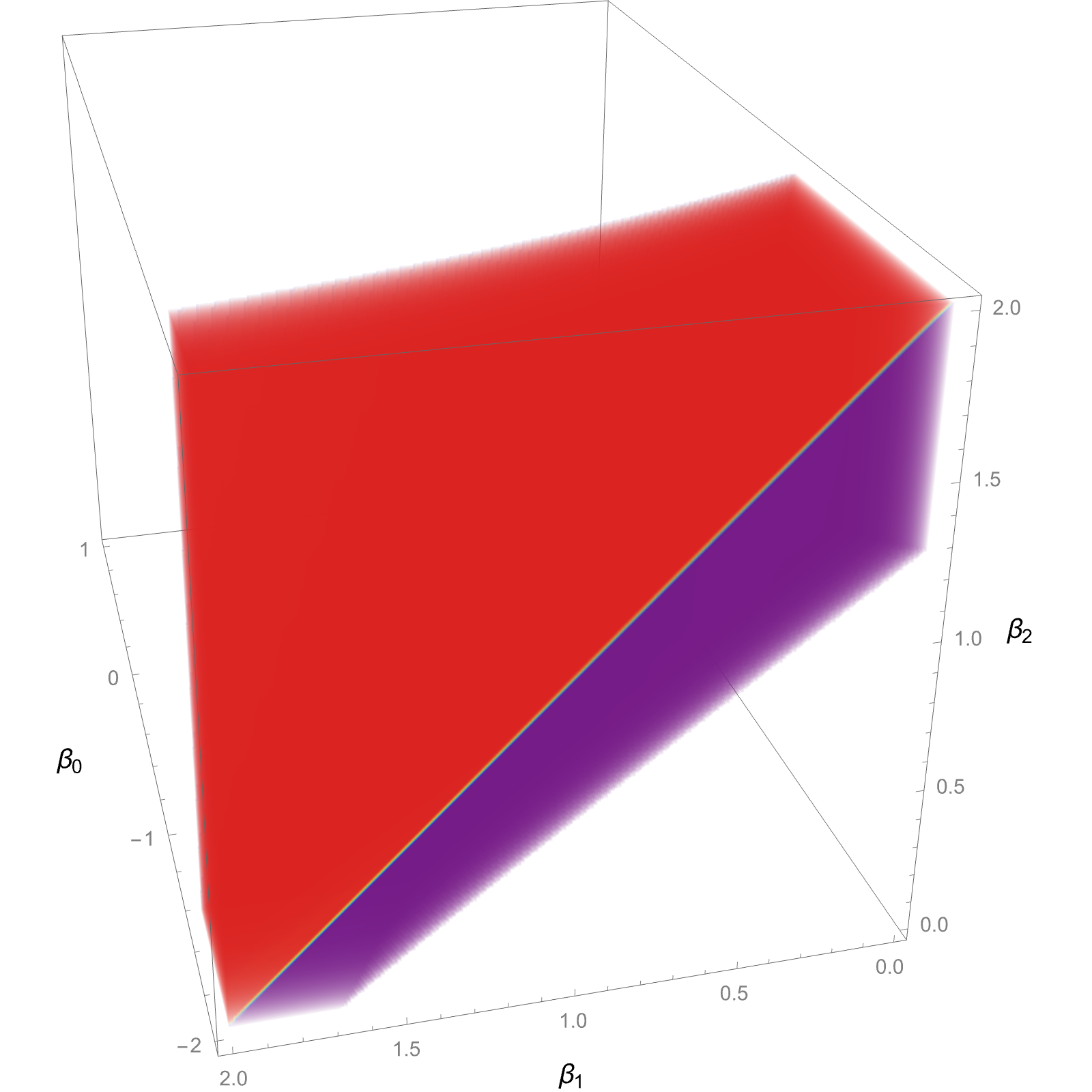}
    \hspace{5mm}
    \includegraphics[align=c,scale=0.4]{figures/ergm/colorbar.png}
\end{center}
\caption{Three views of the two parameter regimes in the ERGM with the same specification as in Figure \ref{fig:ergm_pstar}.
Top row: subcritical regime. Bottom row: supercritical regime.}
\label{fig:ergm_regimes}
\end{figure}

The large deviations principle thus states that in the subcritical regime, the ERGM is close in cut distance to
$\cG(n,p^*)$ for some \emph{unique} $p^* \in [0,1]$, while in the supercritical regime it can be expressed as a finite mixture of measures
which are each close to some Erd\H{o}s--R\'enyi model.
On first sight, this might lead one to believe that the ferromagnetic ERGM is somehow trivial, especially in the subcritical regime.
However, the cut distance is quite coarse, and only captures the behavior of the ERGM to first order.
At the microscopic scale, the ERGM is very different from the Erd\H{o}s--R\'enyi random graph: specifically,
in \cite{mukherjee2023statistics,bresler2018optimal}, it was shown that the total variation distance between the two distributions tends to $1$
in the case where $K=1$ and $G_1$ is a two-star, and this is expected to hold more generally as well.

This leads one to investigate the microscopic structure of the ERGM in more detail, and ask how it compares to
the Erd\H{o}s--R\'enyi random graph in a sense coarser than the total variation distance but finer than the cut distance,
such as the $1$-(Hamming)-Wasserstein distance.
This was first done in \cite{reinert2019approximating}, where it was found that under the optimal coupling between the
two distributions, the subcritical ferromagnetic ERGM differs from the Erd\H{o}s--R\'enyi random graph $\cG(n,p^*)$
by at most $O(n^{3/2})$ edges.
A key input for this result was the rapid mixing of the Glauber dynamics for the subcritical ERGM, as proved in
\cite{bhamidi2008mixing}.

Another consequence of the rapid mixing result of \cite{bhamidi2008mixing} is a powerful Gaussian concentration
inequality for Lipschitz observables of the subcritical ERGM due to \cite{ganguly2024sub}.
In that work, the rapid mixing was combined with a technique from \cite{chatterjee2005concentration} used to
derive such concentration inequalities from various markers of a rapidly mixing chain with the correct stationary distribution.
Applications of the concentration result of \cite{ganguly2024sub} include a central limit theorem for the edge count
in certain sparse collections of possible edges, as well as an alternative proof of the previously mentioned 
Wasserstein distance bound of \cite{reinert2019approximating} (although the bound obtained is worse by a factor of
$\sqrt{\log n}$).
\subsection{Decomposing low-temperature models into metastable wells}
\label{sec:intro_lowtemp}

In the supercritical regime, the ERGM is in general \emph{not} well-approximated by any particular Erd\H{o}s--R\'enyi model, as
there can be phase coexistence which leads to a drastic failure of independence.
Moreover, each of the multiple local maximizers of $L_{\bm\beta}$ gives rise to a distinct \emph{metastable well}
for the Glauber dynamics---the dynamics will take an exponentially long time to escape a small cut-distance ball around $W_{p^*}$
whenever $p^*$ is a local maximizer of $L_{\bm\beta}$ \cite{bhamidi2008mixing}.
This prevents the \emph{global} rapid mixing of the supercritical ERGM Glauber dynamics, as well as a direct application 
of the technique of \cite{chatterjee2005concentration},
necessitating a deeper study of the energy landscape of the ERGM.

Such analysis of the energy landscape of low-temperature models has garnered much recent interest, and has been carried out
in settings such as the lattice Ising model as well as partially in some disordered models such as spin glasses
\cite{gheissari2022low,gamarnik2021overlap,huang2024weak,sellke2024threshold}.
These analyses often lead to improved sampling algorithms via sampling in stages, and can also lead to a better
understanding of why other sampling algorithms fail.
In the case of the supercritical ERGM, it was proved in \cite{bresler2024metastable} via the cavity method that the Glauber dynamics
exhibits metastable mixing, a form of rapid mixing \emph{within} a metastable well when initialized with a ``warm start'' from the
corresponding Erd\H{o}s--R\'enyi measure.

Understanding the energy landscape in a low-temperature model can be a challenging endeavor, as it often
requires specialized analysis that may not easily generalize to other models.
However, one common theme is a decomposition of the state space into pockets corresponding to the metastable wells of natural dynamical
versions of the model, wherein high-temperature behavior can be observed \cite{helmuth2019algorithmic,pirogov1975phase,subag2018free}.
This has led to a folklore belief that such decompositions abound and offer a natural way to understand many low-temperature systems
as mixtures of systems with high-temperature behavior.

In the present work, we formalize this belief in the setting of the supercritical ERGM, through the lens of concentration of measure.
Specifically, we extend the concentration result of \cite{ganguly2024sub} from the subcritical regime to \emph{each relevant metastable well}
in the supercritical regime, using as an input the metastable mixing result of \cite{bresler2024metastable}, combined with a new modification
of the technique of \cite{chatterjee2005concentration} suited for metastable mixing, which may be of independent interest.
We also make use of a result of \cite{barbour2022long} which patches a gap left by the aforementioned technique.
We apply this novel concentration result to gain a better understanding of the structure of the supercritical ERGM, extending results
first derived in the subcritical regime in \cite{ganguly2024sub}.
Specifically, we show that, when conditioned on a metastable well (cut-norm ball) around $W_{p^*}$,
certain $n$-vertex ERGM measures can be coupled to $\cG(n,p^*)$ such that samples differ by at most $O(n^{3/2} \sqrt{\log n})$ edges
in expectation.
Additionally, we derive a central limit theorem for certain small enough subcollections of edges in any supercritical ERGM conditioned
on a metastable well.
\subsection{Statement of results and some proof ideas}
\label{sec:intro_results}

Our results concern the ERGM measure conditioned on a metastable well, i.e.\ a small enough cut distance ball around a
constant graphon $W_{p^*}$, for some $p^* \in U_{\bm\beta}$,
the set of all global maximizers of the function $L_{\bm\beta}$ for which the second derivative
$L_{\bm\beta}''(p^*)$ is strictly negative.
Throughout this section we informally let $\mu$ denote such a conditioned ERGM measure; the precise definition may
depend on the particular result being considered and will be specified in all formal statements after this section.

\subsubsection{Concentration inequality}

In order to state our main result, let us introduce some notation that will be used throughout the article.
Let $[n] = \{1,\dotsc,n\}$, which will be the vertex set of graphs sampled from $\mu$.
These graphs can be represented by elements in $\{0,1\}^{\binom{[n]}{2}}$, where $\binom{[n]}{2}$ is the edge
set of the complete graph $K_n$, namely all size-$2$ subsets of $[n]$.
For a function $f : \{0,1\}^{\binom{[n]}{2}} \to \R$ and a vector $v \in \R_{\geq 0}^{\binom{[n]}{2}}$,
we say that $f$ is $v$-Lipschitz if $|f(x) - f(y)| \leq v_e$ whenever $x$ and $y$ differ only at index $e \in \binom{[n]}{2}$.

The main theorem of this work is the following concentration inequality for $v$-Lipschitz observables under $\mu$.
It gives a Gaussian tail bound at fluctuation scale $n \| v \|_\infty$ in all cases, as well as a tighter
bound $\sqrt{\| v \|_1 \| v \|_\infty}$ in cases where the former bound is suboptimal, which will be essential
in our applications.

\begin{theorem}[Informal, see Theorem \ref{thm:lipschitzconcentration}]
\label{thm:lipschitzconcentration_informal}
There are some constants $C, c > 0$ depending only on the ERGM specification and the choice of $p \in U_{\bm\beta}$
such that, for all $v$-Lipschitz $f$ and all $\lambda \geq 0$ satisfying $\lambda \leq c \| v \|_1$,
\begin{equation}
	\mu[| f - \E_\mu[f] | > \lambda] \leq 2 \Exp{
	- c \cdot \max \left\{
		\fr{\lambda^2}{n^2 \| v \|_\infty^2},
		\fr{\lambda^2}{\| v \|_1 \| v \|_\infty}
		- \Exp{\fr{C\lambda}{\| v \|_\infty} - c n}
		\right\}
	} + e^{-c n}.
\end{equation}
\end{theorem}

The two expressions inside the maximum in the above bound arise from two different general results which will be presented
in Section \ref{sec:concentration} below, one which is a modification of a theorem of \cite{chatterjee2005concentration},
and the other of which is due to \cite{barbour2022long}.
Both results may be phrased as translating certain markers of metastable mixing into a concentration inequality,
but their proofs are quite different and each one has its own benefits and drawbacks.
We emphasize that the ``markers of metastable mixing'' just alluded to are rather subtle and a large portion of the technical
work in the present article goes towards setting up a proper framework for the application of these results.

Let us now comment on the difference between the two expressions in the maximum above.
The first, $\frac{\lambda^2}{n^2 \| v \|_\infty^2}$, comes from the method of \cite{barbour2022long} and is optimal for
\emph{global} observables such as the overall edge count, which has $\| v \|_\infty = 1$.
Indeed, in the case of the Erd\H{o}s--R\'enyi graph, the overall edge count does fluctuate at scale $n$ with Gaussian tails.
However, for more \emph{local} observables, such as the degree of a single vertex, this quantity will give a suboptimal bound,
as we still have $\| v \|_\infty = 1$ but the degree should fluctuate only at scale $\sqrt{n}$.
This is where the second expression,
\begin{equation}
	\frac{\lambda^2}{\| v \|_1 \| v \|_\infty} - \Exp{\frac{C \lambda}{\| v \|_\infty} - cn},
\end{equation}
comes in; this comes from the modification of the result of \cite{chatterjee2005concentration}.
By replacing $n^2 \| v \|_\infty^2$ with $\| v \|_1 \| v \|_\infty$, this second expression allows us to capture the optimal
fluctuation scale for more local observables such as the degree.
However, the subtracted exponential term can ruin the bound if $\lambda \gtrsim n \| v \|_\infty$,
meaning the second expression gives a vacuous bound if the true fluctuation scale is of order $n \| v \|_\infty$.
Luckily, this is exactly the case where the first expression $\frac{\lambda^2}{n^2 \| v \|_\infty^2}$ is optimal,
and so Theorem \ref{thm:lipschitzconcentration_informal} is effective for all observables,
and gives a sufficiently tight rate for the observables which arise in our applications.

We remark that one should expect fluctuations at scale $\| v \|_2$, as is the case for for $v$-Lipschitz
functions of i.i.d.\ variables; our methods do not seem to be able to get this more optimal scale in general.
However, many quantities of interest have ``flat'' Lipchitz vectors with, for instance, $v_e \in \{0,1\}$ for all $e \in \edgeset$,
and in this case we have $\| v \|_1 \| v \|_\infty = \| v \|_2^2$.
Even if the Lipschitz vector is not completely flat, these two rates are often comparable, leading to an optimal (up to constants)
bound via Theorem \ref{thm:lipschitzconcentration_informal}.

Finally, the external $e^{-c n}$ term arises from the possibility that metastable regions exist \emph{within}
the cut distance ball around $p^* \in U_{\bm\beta}$.
For instance, see \cite[Theorem 3.3]{bresler2024metastable}, which exhibits a metastable region of size $e^{-\Theta(n)}$ that
consists of graphs which have the correct edge density everywhere except for \emph{around one particular vertex}.
It should be noted that this anomalous behavior seems like it could be controlled, using for example \cite[Theorem 8.9]{bresler2024metastable}
to remove the external $e^{-c n}$ term, but we do not pursue this presently
as in most applications it just suffices to get an exponentially small upper bound.

\subsubsection{Applications of Theorem \ref{thm:lipschitzconcentration_informal}}

As a consequence of Theorem \ref{thm:lipschitzconcentration_informal}, we can extract various structural information about the ERGM
conditioned on one of these metastable wells.
These results extend \cite[Theorems 2 and 3]{ganguly2024sub} from the subcritical regime to the supercritical regime.
The first result gives an upper bound on the $1$-Wasserstein distance between $\mu$ and $\cG(n,p^*)$, which is the minimal
expected number of differing edges between two samples from these two distributions, under the optimal coupling.
Recall that $M_{\bm\beta}$ denotes the set of all global maximizers of $L_{\bm\beta}$, with no condition on the second derivative.

\begin{theorem}[Informal, see Theorem \ref{thm:wasserstein}]
\label{thm:wasserstein_informal}
Suppose $|M_{\bm\beta}| = 1$, or that all graphs $G_0,\dotsc,G_K$ in the specification of the ERGM are forests.
Then the $1$-Wasserstein distance between $\mu$ and $\cG(n,p^*)$ is $\lesssim n^{3/2} \sqrt{\log n}$.
\end{theorem}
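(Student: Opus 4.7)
The strategy, adapted from \cite{ganguly2024sub}, is to couple $\mu$ with $\cG(n, p^*)$ via two parallel Glauber dynamics chains and bound the expected Hamming distance at stationarity. Let $(X_t)$ denote the Glauber dynamics for $\mu$, started from a warm start $X_0 \sim \cG(n, p^*)$, and let $(Y_t)$ denote the Glauber dynamics for $\cG(n, p^*)$, which at each step refreshes a uniformly random edge to an independent $\mathrm{Ber}(p^*)$. I couple them by selecting a common edge $e_t$ at each step and using a shared uniform random number $U_t \in [0,1]$ for the Bernoulli update. By the metastable mixing of \cite{bresler2024metastable}, after $T = \poly(n)$ steps the law of $X_T$ is within negligible total variation of $\mu$ while $Y_T$ is close to $\cG(n, p^*)$; hence $\E[d_H(X_T, Y_T)]$ upper-bounds $W_1(\mu, \cG(n, p^*))$ up to vanishing mixing error.

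A standard analysis of the coupling yields the recursion $\E[D_{t+1} \mid \mathcal{F}_t] = (1 - 1/m) D_t + (1/m) \sum_e |\sigma(h_e(X_t)) - p^*|$, where $m = \binom{n}{2}$, $D_t = d_H(X_t, Y_t)$, $\sigma(x) = e^x/(1+e^x)$, and $h_e(X) = 2\beta_0 + \sum_i \beta_i \Delta_e N_{G_i}(X)/n^{|V_i|-2}$ is the ERGM Glauber local field at edge $e$. Iterating from $D_0 = 0$ gives $W_1(\mu, \cG(n, p^*)) \lesssim \E_{X \sim \mu}\bigl[\sum_e |\sigma(h_e(X)) - p^*|\bigr]$. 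Using that $\sigma$ is $\tfrac{1}{4}$-Lipschitz and that $p^* = \sigma(h^*)$ for the reference field $h^* = 2\beta_0 + \sum_i \beta_i m_i$ (which is equivalent to $L_{\bm\beta}'(p^*) = 0$), and denoting the normalized local subgraph count deviations $Z_{e,i}(X) := \Delta_e N_{G_i}(X)/n^{|V_i|-2} - m_i$, this reduces to bounding $\E_\mu\bigl[\sum_e |Z_{e,i}(X)|\bigr]$ for each $i$. Cauchy-Schwarz then gives $\sum_e |Z_{e,i}| \leq \sqrt{m}\sqrt{\sum_e Z_{e,i}^2}$, so it suffices to show $\E_\mu\bigl[\sum_e Z_{e,i}^2\bigr] = O(n \log n)$, which delivers the target $O(n^{3/2} \sqrt{\log n})$.

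To control the second-moment sum, I expand $\sum_e Z_{e,i}(X)^2$ algebraically into a linear combination of global normalized subgraph counts $N_H(X)/n^{|V_H|-2}$, where $H$ ranges over ``pair graphs'' obtained from gluing two embeddings of $G_i$ along a shared edge, plus a term linear in $N_{G_i}(X)$. Each such count is Lipschitz with per-edge constant $O(1)$ after normalization, so Theorem \ref{thm:lipschitzconcentration_informal} gives Gaussian concentration at scale $O(n \sqrt{\log n})$, small relative to the $\Theta(n^2)$ asymptotic mean. Under $\cG(n, p^*)$ a direct variance computation exploiting edge-independence yields $\E_{\cG(n,p^*)}\bigl[\sum_e Z_{e,i}^2\bigr] = O(n)$, matching the $1/\sqrt{n}$ CLT-scale fluctuation of each $Z_{e,i}$. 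The structural hypothesis---either $|M_{\bm\beta}| = 1$ (so $\mu$ is close to the unconditional ERGM and inherits the right expectations), or all $G_i$ forests (so the mean-field prediction for pair-count expectations is tight within the metastable well)---guarantees that the $\mu$-expectations of these pair counts match their $\cG(n, p^*)$-analogues up to lower-order error, so the $O(n)$ bound transfers to $\mu$ with at most an extra $\log n$ factor from the concentration tails.

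The main obstacle is the sharp second-moment estimate. A direct application of Theorem \ref{thm:lipschitzconcentration_informal} to each individual $Z_{e,i}$ is too crude: its Gaussian scale $\sqrt{\|v\|_1 \|v\|_\infty}$ for $Z_{e,i}$, whose Lipschitz vector satisfies $\|v\|_\infty = O(1/n)$ and $\|v\|_1 = O(n)$, equals $O(1)$, overshooting the true CLT-scale fluctuation $O(1/\sqrt n)$ by a factor of $\sqrt{n}$. The resolution is to apply the concentration inequality at the level of the global aggregated pair counts, where the Lipschitz profile is favorable. Additional care is needed to absorb the external $e^{-\Omega(n)}$ term in Theorem \ref{thm:lipschitzconcentration_informal} by truncation on a high-probability event within the metastable well, and to verify that the $\mu$-expectations of the relevant subgraph counts match their $\cG(n,p^*)$-counterparts up to $o(n)$ error---this last step is where the structural hypothesis on $M_{\bm\beta}$ or the forest specification enters crucially.
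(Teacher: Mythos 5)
The skeleton of your proposal---coupled parallel dynamics, a recursion reducing $W_1(\mu,\cG(n,p^*))$ to $\E_\mu\left[\sum_e |\phi(\partial_e H(X)) - p^*|\right]$---is essentially the paper's strategy. But you then take a long detour that is both unnecessary and built on a computational error, and you elide the single subtlest point of the argument.

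The error is in your assessment that ``a direct application of Theorem~\ref{thm:lipschitzconcentration_informal} to each individual $Z_{e,i}$ is too crude,'' based on the claim that $Z_{e,i}$ (equivalently, $\partial_e H(X)$) has $\|v\|_1 = O(n)$ and hence Gaussian scale $O(1)$. Recompute $\|v\|_1$: toggling an edge $e'$ that shares a vertex with $e$ changes $N_{G_i}(X,e)/n^{|V_i|-2}$ by $O(1/n)$, and there are only $O(n)$ such $e'$, contributing $O(1)$; toggling any of the $O(n^2)$ remaining edges $e'$ changes it by only $O(1/n^2)$, again contributing $O(1)$ in total. So $\|v\|_1 = O(1)$, $\|v\|_\infty = O(1/n)$, and Theorem~\ref{thm:lipschitzconcentration_informal} directly gives Gaussian concentration of $\partial_e H(X)$ around its mean at scale $O(1/\sqrt n)$---exactly the CLT rate you wanted, with a $\sqrt{\log n}$ loss from picking a $1 - n^{-3}$ confidence level. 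The paper proceeds exactly this way. Your ``pair graph'' second-moment machinery, introduced specifically to circumvent a failure that does not occur, is thus not needed, and in any case is not fleshed out enough to be checkable (you never compute the Lipschitz profile of the aggregated pair counts or control their means under $\mu$).

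The second issue is more conceptual: you define $Z_{e,i}$ by centering at the \emph{Erd\H{o}s--R\'enyi} value $m_i = 2|E_i|(p^*)^{|E_i|-1}$, which presupposes that $\E_\mu[\partial_e H(X)] \approx \Psi_{\bm\beta}(p^*)$. But a priori one only knows that $\E_\mu[X(e)]$ is close to $p^*$ up to an $o(1)$ error coming from the cut-distance LDP, which is far too weak. The paper closes this gap via a self-consistency fixed-point argument: Proposition~\ref{prop:Eproduct} shows $\E_\mu[\partial_e H(X)]$ is within $O(1/n)$ of $\Psi_{\bm\beta}(\E_\mu[X(e)])$, concentration then places $\phi(\partial_e H(X))$ near $\phi(\Psi_{\bm\beta}(\E_\mu[X(e)]))$, and since $\phi(\partial_e H(X))$ has $\mu$-mean $\E_\mu[X(e)]$ and $p^*$ is the \emph{only} fixed point of $p \mapsto \phi(\Psi_{\bm\beta}(p))$ in the ball, one deduces $|\E_\mu[X(e)] - p^*| \lesssim \sqrt{\log n / n}$ rather than assuming it. Proposition~\ref{prop:Eproduct} is precisely where the structural hypothesis ($|M_{\bm\beta}| = 1$ so that FKG for the unconditioned measure applies, or all $G_i$ forests so that a variance-decomposition trick works for the conditioned measure) is used; your remark that the hypothesis ``guarantees the pair-count expectations match their $\cG(n,p^*)$-analogues'' gestures at this but is circular as stated, since that matching is what the fixed-point argument ultimately delivers, not what it starts from.
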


The somewhat strange hypothesis of this theorem is an artifact of our proof strategy, which is adapted from that of \cite{ganguly2024sub}.
In that work, a key input to the corresponding result was the FKG inequality, which does not hold for the conditioned measure $\mu$.
This is an issue especially in the phase coexistence regime, when $|M_{\bm\beta}| > 1$.
As such, in the present work we introduce a new technique to get around the use of the FKG inequality in the phase coexistence regime,
but we must introduce the second hypothesis, that none of the specifying graphs have cycles, in order to derive the Wasserstein distance bound.
Note that in follow-up work by Mukherjee and the author \cite{fkg}, an approximate FKG inequality will be proved allowing for this condition
to be removed.
Nevertheless, we view the workaround found in the present article as an interesting contribution due to the novel technique it introduces;
see the proof of Proposition \ref{prop:Eproduct} for details.

We remark that it is also possible to remove this condition 
by instead using an argument similar to the proof of \cite[Theorem 1.13]{reinert2019approximating}, which would have
the added benefit of removing the $\sqrt{\log n}$ factor.
Tools developed in the current work, in particular Proposition \ref{prop:goodset} below, would be helpful for this argument.
While we do not pursue this presently, follow-up work by the author \cite{was} yields the same result, using a different strategy
which actually uses Theorem \ref{thm:wasserstein_informal} as an input to a bootstrapping argument.

The second application we present is a central limit theorem for the edge counts in small subcollections of edges.
Fortunately, our new technique allows us to completely bypass the FKG inputs used in \cite{ganguly2024sub} for this theorem,
and we don't require any additional hypotheses beyond what were used in that work; in fact we are able to show slightly more.

\begin{theorem}[Informal, see Theorem \ref{thm:clt}]
\label{thm:clt_informal}
Let $S_n(x)$ denote the edge count within a growing but sublinear (in $n$) collection of edges in $\binom{[n]}{2}$ which are either vertex-disjoint
or all incident on a single vertex.
Then $S_n(X)$ obeys a central limit theorem under $X \sim \mu$, as $n \to \infty$.
\end{theorem}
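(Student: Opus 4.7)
The plan is to prove the central limit theorem via Stein's method with an exchangeable pair, using our concentration inequality (Theorem~\ref{thm:lipschitzconcentration_informal}) in place of the FKG inputs of \cite{ganguly2024sub}.

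First, I would record the exact one-edge conditional formula
\[
p_e(X) \;:=\; \mu(X_e = 1 \mid X^{(e)}) \;=\; \sigma\bigl(\Delta_e H(X)\bigr),
\]
where $\sigma$ is the logistic function and $\Delta_e H$ is a polynomial in rescaled subgraph counts through $e$. Since $p^*$ is a critical point of $L_{\bm\beta}$, whenever the relevant local subgraph densities through $e$ match those of $W_{p^*}$ one has $p_e(X) = p^*$ exactly. These local densities are Lipschitz in $X$ with $\|v\|_\infty = O(1/n)$ and $\|v\|_1 = O(1)$, so Theorem~\ref{thm:lipschitzconcentration_informal} forces them to fluctuate at scale $o(1)$ with $\mu$-probability $1 - o(1)$, and smoothness of $\sigma$ then yields $p_e(X) = p^* + o(1)$ on the same event.

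Next I would set up the exchangeable pair. Let $X \sim \mu$ and build $X'$ by drawing $e \in \mathcal{E}$ uniformly and resampling $X_e$ from its $\mu$-conditional distribution given $X^{(e)}$; reversibility of this heat-bath step makes $(X, X')$ exchangeable. Setting $W = (S_n - \E_\mu S_n)/\sigma_n$ with $\sigma_n^2 = \Var_\mu(S_n)$, a direct calculation gives, with $\lambda = 1/m$,
\[
\E[W - W' \mid X] - \lambda W \;=\; \frac{1}{m\sigma_n}\sum_{e \in \mathcal{E}} \bigl(p^* - p_e(X)\bigr).
\]
Stein's exchangeable-pair theorem bounds the Wasserstein distance from $W$ to $N(0,1)$ by $\lambda^{-1}\E\bigl|\E[W - W'\mid X] - \lambda W\bigr|$ plus a quadratic remainder involving $\E[(W-W')^2 \mid X] - 2\lambda$. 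Using Step~1 together with the variance estimate $\sigma_n^2 = \Theta(m)$ (which I would verify separately from a direct second-moment computation, bounding off-diagonal covariances by $O(1/n)$ via the same local-density concentration), the drift contribution comes out to $O(\sqrt{m/n}) = o(1)$ by sublinearity of $|\mathcal{E}|$. The quadratic term is handled analogously, since $\E[(W-W')^2 \mid X]$ is a normalized average of Bernoulli variances $p_e(1-p_e)$ each concentrated near $p^*(1-p^*)$.

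The main obstacle is the simultaneous control of $\{p_e(X) : e \in \mathcal{E}\}$ under $\mu$. For a matching $\mathcal{E}$, a union bound over the $|\mathcal{E}| = o(n)$ edges suffices, because each individual fluctuation probability from Theorem~\ref{thm:lipschitzconcentration_informal} is much smaller than $1/n$ (owing to the $n\lambda^2$-type exponent arising from $\|v\|_1 \|v\|_\infty = O(1/n)$). For a star centered at some vertex $v_0$, the relevant local densities overlap substantially around $v_0$, so instead of a union bound I would apply Theorem~\ref{thm:lipschitzconcentration_informal} once to a single Lipschitz functional capturing the local structure around $v_0$ and translate its concentration into simultaneous control of all $p_e$ via smoothness of $\sigma$. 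In both cases, the sublinearity of $|\mathcal{E}|$ is exactly what propagates the per-edge $o(1)$ error into a vanishing Wasserstein distance in Stein's bound, so that no FKG-based covariance comparison is needed.
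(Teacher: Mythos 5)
Your proposal pursues Stein's method with exchangeable pairs, which is a genuinely different route from the paper's proof, which uses the method of moments.
The paper proves a quantitative conditional-independence estimate (Proposition~\ref{prop:62}) and a multi-way correlation bound (Proposition~\ref{prop:64}), then computes all moments of $S_n$ and matches them to the Gaussian.
Your Stein approach would replace those higher-order moment estimates by concentration of two Lipschitz functionals---the drift $\sum_{e\in\mathcal E}p_e(X)$ and the quadratic variation $\sum_{e\in\mathcal E}q_e(X)$---both controllable by Theorem~\ref{thm:lipschitzconcentration_informal}, together with the variance estimate $\sigma_n^2=\Theta(m)$ which still needs the pairwise covariance bounds of Lemma~\ref{lem:cov}.
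If carried out carefully this would be a cleaner and quantitatively stronger argument (it would yield a Wasserstein rate), and it is exactly the sublinearity $m\ll n$ that makes the drift $\E|R|/\lambda\lesssim\sigma_n^{-1}\cdot\sqrt{\|v\|_1\|v\|_\infty}$ vanish in the star case where $\|v\|_\infty\sim m/n$.

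Two points need repair.
First, the linearity identity should read
\begin{equation}
\E[W-W'\mid X]-\lambda W = \fr{1}{m\sigma_n}\sum_{e\in\mathcal E}\bigl(\E_\mu[X(e)]-p_e(X)\bigr),
\end{equation}
with $\E_\mu[X(e)]$ rather than $p^*$; the two agree only up to $o(1)$, and since $\E_\mu[p_e(X)]=\E_\mu[X(e)]$ by the tower property, you only need concentration of $\sum_e p_e$ around its \emph{own mean}, which is precisely what Theorem~\ref{thm:lipschitzconcentration_informal} gives.
This matters because your stated claim ``$p_e(X)=p^*+o(1)$ follows from Theorem~\ref{thm:lipschitzconcentration_informal}'' is not correct as written: Theorem~\ref{thm:lipschitzconcentration_informal} gives concentration of $p_e(X)$ around the unknown $\E_\mu[p_e(X)]$, and pinning that mean to $p^*$ is a separate argument (Proposition~\ref{prop:marginal}) which the paper can only prove under the extra hypothesis $|M_{\bm\beta}|=1$ or all-forests---a hypothesis Theorem~\ref{thm:clt} deliberately avoids.
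Framed with $\E_\mu[X(e)]$, this dependency disappears.
Second, for the star case you speak of ``a single Lipschitz functional capturing the local structure around $v_0$'' that gives ``simultaneous control of all $p_e$'', but no such simultaneous pointwise control is available or needed: the Stein bound only requires control of the \emph{sum} $\sum_e p_e(X)$, which is a single Lipschitz functional with $\|v\|_1\lesssim m$ and $\|v\|_\infty\lesssim m/n$, and the sublinearity of $m$ is what makes its fluctuation scale $m/\sqrt n=o(\sqrt m)=o(\sigma_n)$.
With these two adjustments the argument is sound and presents a worthwhile alternative to the paper's moment computation.
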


The condition that the edges must be vertex-disjoint was present in \cite{ganguly2024sub} for the subcritical regime,
but the alternative condition that all edges be incident on a single vertex was not covered there, although essentially the same
proof goes through.
Recently, it was shown in \cite{fang2024normal} that these conditions can be removed in the perturbative \emph{Dobrushin uniqueness regime},
which should be thought of as a very high temperature regime.
There, it was shown using Stein's method that in fact the full edge count has a central limit theorem.
In addition, after the first version of the present article was posted on arXiv, \cite{fang2024normal} was updated to cover the full
subcritical regime, and this was later extended to metastable wells in the supercritical regime by the author \cite{winstein2025quantitative}.
We also mention an earlier work \cite{mukherjee2023statistics}, which characterizes the limiting distribution of the edge count
in \emph{all} parameter regimes for the \emph{two-star} ERGM, i.e.\ in the case where $K = 1$ and $G_1$ is a two-star.
However, their argument is specialized to this case as it has only pairwise interactions and so can be compared
with an appropriate Ising model.

\subsubsection{Some proof ideas}

In order to prove Theorem \ref{thm:lipschitzconcentration_informal}, we will make use of two general results,
Theorem \ref{thm:chatterjee} and Corollary \ref{cor:barbour} below, which translate certain markers of metastable mixing into 
concentration inequalities.
Corollary \ref{cor:barbour} follows directly from \cite[Theorem 2.1]{barbour2022long} and is useful when
considering global observables, whereas Theorem \ref{thm:chatterjee} is a novel adaptation of \cite[Theorem 3.3]{chatterjee2005concentration},
and is useful when considering local observables, as discussed below the statement of Theorem \ref{thm:lipschitzconcentration_informal}.
As the statements of these results are rather long, we do not state informal versions here,
but instead give a short discussion in the next few paragraphs.

Suppose we have some function $f$ of a random variable $X$, and there is a Markov chain $(X_t)$ on the state space which allows one to
obtain approximate samples of $X$.
If the Markov chain mixes rapidly, and the value of $f(X_t)$ does not change by much from one time step to the next, then it is intuitive
that multiple samples of $X$ should yield similar values of $f(X)$: in other words, $f(X)$ is concentrated.
This intuition was first made formal in the context of traditionally mixing Markov chains by \cite[Theorem 3.3]{chatterjee2005concentration}.

As opposed to traditional forms of mixing, metastable mixing often only supposes that the Markov chain has suitable mixing properties
within a metastable well when given a warm start, or when started from a specific collection of states in the well.
Moreover, as the mixing is only metastable, one cannot in general expect the error to decrease without end as in traditional mixing
(although in some special cases such as the $p$-spin Curie--Weiss model \cite{samanta2024mixing} this actually can be done).
These features are incorporated in \cite[Theorem 2.1]{barbour2022long}, which is well-suited for the metastable mixing setup but
unfortunately gives a suboptimal rate of concentration for local observables.
Our new result, Theorem \ref{thm:chatterjee}, solves this problem by modifying \cite[Theorem 3.3]{chatterjee2005concentration}
to work with metastable mixing, but this modification leads to its own drawbacks limiting the applicability for global observables.
The proof techniques for Theorem \ref{thm:chatterjee} and Corollary \ref{cor:barbour} are different, and it remains an interesting
problem to combine these techniques in a way which retains both benefits and eliminates both drawbacks.

As mentioned, we use the metastable mixing results of \cite{bresler2024metastable} as inputs for Theorem \ref{thm:chatterjee}
and Corollary \ref{cor:barbour} in order to prove Theorem \ref{thm:lipschitzconcentration}.
However, note that the results of \cite{bresler2024metastable} assume a \emph{warm start}, whereas our techniques
seem limited in general to the ``good starting set'' form of metastable mixing.
In the present setting, we are able to translate between the two, but this seemingly innocuous distinction actually leads to a rather
outsized portion of the technical work in proving Theorem \ref{thm:lipschitzconcentration_informal}, due to the difficulty of accessing
certain connectivity properties required for a path-coupling argument, which can be mostly overlooked in the warm start setting.
It is therefore an interesting question whether a result can be proved which is similar to Theorem \ref{thm:chatterjee}
or Corollary \ref{cor:barbour}, but which can be used directly with metastable mixing using a warm start.
This would alleviate some of the technicalities faced in the present work, allowing for easier and possibly wider applications.
\subsection{Outline of the paper}
\label{sec:intro_outline}

Before delving into the mathematical content of the article, in Section \ref{sec:simulations} we discuss a simulation study
of metastable wells in a variety of supercritical ERGMs.
This provides some context for our results and also leads to some interesting new questions.
We then begin the mathematical discussion in Section \ref{sec:review} with a review of the literature on the ERGM,
highlighting results which will be relevant for the proofs to follow.
In Section \ref{sec:concentration} we present the results deriving concentration from metastable mixing in general,
including our novel result, Theorem \ref{thm:chatterjee}, which we hope is of independent interest.
This does not rely on any special knowledge of the ERGM, so the reader only interested in the general concentration
result may skip directly to Section \ref{sec:concentration}.
Then, in Section \ref{sec:lipschitz}, we apply Theorem \ref{thm:chatterjee} as well as Corollary \ref{cor:barbour}
to Lipschitz observables of the ERGM and obtain our main theorem, Theorem \ref{thm:lipschitzconcentration_informal}.
Finally, in Section \ref{sec:applications} we derive Theorems \ref{thm:wasserstein_informal} and \ref{thm:clt_informal} as applications
of Theorem \ref{thm:lipschitzconcentration_informal}.
\subsection{Acknowledgements}
\label{sec:intro_acknowledgements}

I was partially supported by the NSF Graduate Research Fellowship grant DGE 2146752.
I would like to thank my advisor, Shirshendu Ganguly, for suggesting this project and for helpful discussions.
I would also like to thank Sourav Chatterjee, Persi Diaconis, Sumit Mukherjee, Charles Radin, and Rikhav Shah
for helpful feedback on an earlier draft of this article.
Finally, I would like to thank the anonymous referees for suggestions which greatly improved this article.
\section{Simulations}
\label{sec:simulations}

\def\trilo{$\color{blue}\vartriangle$}
\def\trihi{$\color{cyan}\blacktriangle$}
\def\tetlo{$\color{red}\square$}
\def\tethi{$\color{magenta}\blacksquare$}
\def\hexlo{$\color{green}\fullmoon$}
\def\hexhi{$\color{yellow}\newmoon$}

\begin{figure}
    \centering

    \pgfmathdeclarefunction{H}{1}{
    \pgfmathparse{\bZ * #1 ^ \eZ + \bO * #1 ^ \eO + \bT * #1 ^ \eT}%
    }
    \pgfmathdeclarefunction{I}{1}{
    \pgfmathparse{0.5 * #1 * ln(#1) + 0.5 * (1 - #1) * ln(1 - #1)}%
    }
    \pgfmathdeclarefunction{L}{1}{
    \pgfmathparse{H(#1) - I(#1)}%
    }

    \begin{tikzpicture}
    \begin{axis}[
        width=16cm,
        height=8cm,
        axis lines=middle,
        xlabel={$p$},
        ylabel={$L_\beta(p)$},
        domain=0:1,
        samples=500,
        thick,
        grid=both,
        minor tick num=1,
        grid style={gray!10},
        enlargelimits=false,
        clip=false,
        scaled ticks=false,
        yticklabel style={/pgf/number format/fixed}, 
        yticklabel style={/pgf/number format/precision=5},
        axis line style={-,gray},
        label style={gray},
        tick label style={gray},
    ]

    \def\bZ{-1.17}
    \def\bO{1.17}
    \def\bT{0.06}
    \def\eZ{1}
    \def\eO{2}
    \def\eT{32}
    \def\p{0.6}

    \addplot[green, thick] {L(x)};

    \def\bZ{-1}
    \def\bO{0.55}
    \def\bT{0.5}
    \def\eZ{1}
    \def\eO{2}
    \def\eT{3}
    \addplot[blue, thick] {L(x)};

    \def\p{0.18277}
    \addplot[mark=*, black, mark size=1pt] coordinates {(\p,{L(\p)})};
    \addplot[mark=text, text mark=\trilo] coordinates {(\p,{L(\p)+0.005})};

    \def\p{0.93653}
    \addplot[mark=*, black, mark size=1pt] coordinates {(\p,{L(\p)})};
    \addplot[mark=text, text mark=\trihi] coordinates {(\p,{L(\p)+0.005})};

    \def\bZ{-0.6}
    \def\bO{0.25}
    \def\bT{0.5}
    \def\eZ{1}
    \def\eO{2}
    \def\eT{6}
    \addplot[red, thick] {L(x)};

    \def\p{0.28937}
    \addplot[mark=*, black, mark size=1pt] coordinates {(\p,{L(\p)})};
    \addplot[mark=text, text mark=\tetlo] coordinates {(\p-0.01,{L(\p)+0.005})};
    \addplot[mark=text, text mark=\hexlo] coordinates {(\p+0.01,{L(\p)+0.005})};

    \def\p{0.99666}
    \addplot[mark=*, black, mark size=1pt] coordinates {(\p,{L(\p)})};
    \addplot[mark=text, text mark=\tethi] coordinates {(\p-0.01,{L(\p)+0.005})};
    \addplot[mark=text, text mark=\hexhi] coordinates {(\p+0.01,{L(\p)+0.005})};

    \end{axis}
    \end{tikzpicture}
    \caption{
    Plots of $L_{\bm\beta}$ for various ERGM specifications, as well as marks representing the datasets we consider
    in Section \ref{sec:simulations}.
    All specifications have $K=2$ and $G_0, G_1$ as an edge and a $2$-star (wedge).
    For the lower blue curve, $G_2$ is a triangle, and for the upper red curve, $G_2$ is either a tetrahedron or a hexagon,
    both of which yield the same $L_{\bm\beta}$.
    The local maxima have been highlighted and labeled with symbols corresponding to the datasets outlined in Table \ref{table:data}.
    For the lower blue curve, the global maximum is on the left, while for the upper red curve, the global maximum is on the right.
    We also remark that it is possible for $L_{\bm\beta}$ to have more than two local maxima; the green curve without any marked points
    demonstrates this, corresponding to the specification where $G_2$ is any graph with $32$ edges and we take $\bm\beta = (-1.17, 1.17, 0.06)$.
    However, due to computational constraints, we did not generate a dataset for this specification.
    }
    \label{fig:sim_L}
\end{figure}

In this section we supplement our mathematical results with a simulation study of metastable wells in various
supercritical ERGMs.
For all models we consider, we take $K=2$ and set the graphs $G_0$ and $G_1$ to be an edge and a two-star respectively.
The third graph will either be a triangle, a tetrahedron, or a hexagon.
We have selected parameters $\bm\beta$ which lead to two distinct local maxima for $L_{\bm\beta}$ in each case
(see Figure \ref{fig:sim_L}), and considered the model conditioned on both metastable wells.
Each $n$-vertex ERGM sample was generated by running Glauber dynamics for $n^3$ steps, which leads to a total variation
distance of $e^{-\Omega(n)}$ from the true ERGM distribution conditioned on a metastable well, by the results of
\cite{bhamidi2008mixing,bresler2024metastable}.
Note that due to technicalities in the hypotheses of \cite{bresler2024metastable}, this mixing bound has actually only
been proven in the case where we consider the \emph{global} maxima of $L_{\bm\beta}$, and our results in Section \ref{sec:intro_results}
are also only stated for such metastable wells, but we expect that everything should also hold for the smaller metastable wells.

Table \ref{table:data} describes the datasets we collected.
For each dataset, we considered graph sizes of the form $n = \lfloor b^s \rfloor$ for some base $b$
and some range of integer scales $s$.
As the efficiency of simulating Glauber dynamics depends on the graphs in the specification of an ERGM,
we considered slightly different bases and scale ranges for different graphs $G_2$.
For each value of $n = \lfloor b^s \rfloor$ obtained in this way, we collected a number of samples, either $128$ or $8192$.
We have characterized each metastable well specification (i.e.\ the set of graphs, $\bm\beta$, and $p^*$)
with a unique symbol in the plots;
these symbols also appear in Figure \ref{fig:sim_L} to pictorially represent each specification.
The first 6 datasets are used in Sections \ref{sec:simulations_concentration} and \ref{sec:simulations_wasserstein},
which are related to our Theorems \ref{thm:lipschitzconcentration_informal} and \ref{thm:wasserstein_informal} respectively.
Validating the CLT of Theorem \ref{thm:clt_informal} required more samples as will be discussed in Section \ref{sec:simulations_clt},
and we use the last two datasets for this purpose.

\begin{table}
\centering
\begin{tabular}{c|c|c|c|c|c|c|c|}
     & symbol & $G_2$ & $\bm\beta$ & $p^*$ & samples & base $b$ & scale $s$ range \\
    \hline
    dataset 1 & \trilo & triangle & $(-1,0.55,0.5)$ & $\approx 0.18299$ & $128$ & $\sqrt{2}$ & $6$--$16$ \\
    \hline
    dataset 2 & \trihi & triangle & $(-1,0.55,0.5)$ & $\approx 0.93653$ & $128$ & $\sqrt{2}$ & $11$--$16$ \\
    \hline
    dataset 3 & \tetlo & tetrahedron & $(-0.6,0.25,0.5)$ & $\approx 0.28937$ & $128$ & $\sqrt{2}$ & $6$--$15$ \\
    \hline
    dataset 4 & \tethi & tetrahedron & $(-0.6,0.25,0.5)$ & $\approx 0.99666$ & $128$ & $\sqrt{2}$ & $10$--$15$ \\
    \hline
    dataset 5 & \hexlo & hexagon & $(-0.6,0.25,0.5)$ & $\approx 0.28937$ & $128$ & $\sqrt[4]{2}$ & $12$--$25$ \\
    \hline
    dataset 6 & \hexhi & hexagon & $(-0.6,0.25,0.5)$ & $\approx 0.99666$ & $128$ & $\sqrt[4]{2}$ & $19$--$25$ \\
    \hline
    dataset 7 & \trilo & triangle & $(-1,0.55,0.5)$ & $\approx 0.18299$ & $8192$ & $\sqrt{2}$ & $6$--$14$ \\
    \hline
    dataset 8 & \tetlo & tetrahedron & $(-0.6,0.25,0.5)$ & $\approx 0.28937$ & $8192$ & $\sqrt{2}$ & $6$--$13$ \\
    \hline
\end{tabular}
\caption{descriptions of the datasets considered in this section.}
\label{table:data}
\end{table}

\subsection{Fluctuations of Lipschitz observables}
\label{sec:simulations_concentration}

To supplement Theorem \ref{thm:lipschitzconcentration_informal}, we calculated
a few Lipschitz observables of the samples in datasets 1 through 6.
We considered \emph{global} edge and triangle counts (the total number of edges or triangles in the graph), as well as
\emph{local} edge and triangle counts (the number of edges or triangles incident on a particular vertex).
Note that the local edge count is the same as the degree of a vertex.
As a proxy for the fluctuation scale of these observables, we calculated the sample standard deviation of the observed values;
the results are plotted in Figure \ref{fig:sim_concentration}, along with polynomial functions which match the data closely.
No attempt was made to find the true curves of best fit, we simply aim to demonstrate the power laws with reasonable constants.

\begin{figure}
\centering
\def\length{4.5cm}
\def\spacing{1mm}
\def\markscale{0.5}
    \begin{tikzpicture}
        \begin{loglogaxis}[
            title={global edge count},
            width=\length,
            height=\length,
            font=\small,
            grid=none
        ]
        \addplot[mark=text, text mark={\scalebox{\markscale}{\hexhi}}, only marks] table[col sep=comma] {data/small_hexagon_high/total_edge_count_stds.csv};
        \addplot[mark=text, text mark={\scalebox{\markscale}{\tethi}}, only marks] table[col sep=comma] {data/small_tetrahedron_high/total_edge_count_stds.csv};
        \addplot[mark=text, text mark={\scalebox{\markscale}{\trihi}}, only marks] table[col sep=comma] {data/small_triangle_high/total_edge_count_stds.csv};
        \addplot[mark=text, text mark={\scalebox{\markscale}{\hexlo}}, only marks] table[col sep=comma] {data/small_hexagon_low/total_edge_count_stds.csv};
        \addplot[mark=text, text mark={\scalebox{\markscale}{\tetlo}}, only marks] table[col sep=comma] {data/small_tetrahedron_low/total_edge_count_stds.csv};
        \addplot[mark=text, text mark={\scalebox{\markscale}{\trilo}}, only marks] table[col sep=comma] {data/small_triangle_low/total_edge_count_stds.csv};
        \addplot[
            thick,
            domain=5:300,
            samples=2
        ] {0.4*x^(1)}
        node[pos=0.6, above left] {$0.4 \cdot n$};
        \end{loglogaxis}
    \end{tikzpicture}
    \hspace{\spacing}
    \begin{tikzpicture}
        \begin{loglogaxis}[
            title={local edge count},
            width=\length,
            height=\length,
            font=\small,
            grid=none
        ]
        \addplot[mark=text, text mark={\scalebox{\markscale}{\hexhi}}, only marks] table[col sep=comma] {data/small_hexagon_high/edge_count_around_vertex_stds.csv};
        \addplot[mark=text, text mark={\scalebox{\markscale}{\tethi}}, only marks] table[col sep=comma] {data/small_tetrahedron_high/edge_count_around_vertex_stds.csv};
        \addplot[mark=text, text mark={\scalebox{\markscale}{\trihi}}, only marks] table[col sep=comma] {data/small_triangle_high/edge_count_around_vertex_stds.csv};
        \addplot[mark=text, text mark={\scalebox{\markscale}{\hexlo}}, only marks] table[col sep=comma] {data/small_hexagon_low/edge_count_around_vertex_stds.csv};
        \addplot[mark=text, text mark={\scalebox{\markscale}{\tetlo}}, only marks] table[col sep=comma] {data/small_tetrahedron_low/edge_count_around_vertex_stds.csv};
        \addplot[mark=text, text mark={\scalebox{\markscale}{\trilo}}, only marks] table[col sep=comma] {data/small_triangle_low/edge_count_around_vertex_stds.csv};
        \addplot[
            thick,
            domain=5:300,
            samples=2
        ] {0.5*x^(0.5)}
        node[pos=0.6, above left] {$0.5 \cdot n^{\frac{1}{2}}$};
        \end{loglogaxis}
    \end{tikzpicture}
    \hspace{\spacing}
    \begin{tikzpicture}
        \begin{loglogaxis}[
            title={global triangle count},
            width=\length,
            height=\length,
            font=\small,
            grid=none
        ]
        \addplot[mark=text, text mark={\scalebox{\markscale}{\hexhi}}, only marks] table[col sep=comma] {data/small_hexagon_high/total_triangle_count_stds.csv};
        \addplot[mark=text, text mark={\scalebox{\markscale}{\tethi}}, only marks] table[col sep=comma] {data/small_tetrahedron_high/total_triangle_count_stds.csv};
        \addplot[mark=text, text mark={\scalebox{\markscale}{\trihi}}, only marks] table[col sep=comma] {data/small_triangle_high/total_triangle_count_stds.csv};
        \addplot[mark=text, text mark={\scalebox{\markscale}{\hexlo}}, only marks] table[col sep=comma] {data/small_hexagon_low/total_triangle_count_stds.csv};
        \addplot[mark=text, text mark={\scalebox{\markscale}{\tetlo}}, only marks] table[col sep=comma] {data/small_tetrahedron_low/total_triangle_count_stds.csv};
        \addplot[mark=text, text mark={\scalebox{\markscale}{\trilo}}, only marks] table[col sep=comma] {data/small_triangle_low/total_triangle_count_stds.csv};
        \addplot[
            thick,
            domain=5:300,
            samples=2
        ] {0.02*x^(2)}
        node[pos=0.4, below right] {$0.02 \cdot n^2$};
        \end{loglogaxis}
    \end{tikzpicture}
    \hspace{\spacing}
    \begin{tikzpicture}
        \begin{loglogaxis}[
            title={local triangle count},
            width=\length,
            height=\length,
            font=\small,
            grid=none
        ]
        \addplot[mark=text, text mark={\scalebox{\markscale}{\hexhi}}, only marks] table[col sep=comma] {data/small_hexagon_high/triangle_count_around_vertex_stds.csv};
        \addplot[mark=text, text mark={\scalebox{\markscale}{\tethi}}, only marks] table[col sep=comma] {data/small_tetrahedron_high/triangle_count_around_vertex_stds.csv};
        \addplot[mark=text, text mark={\scalebox{\markscale}{\trihi}}, only marks] table[col sep=comma] {data/small_triangle_high/triangle_count_around_vertex_stds.csv};
        \addplot[mark=text, text mark={\scalebox{\markscale}{\hexlo}}, only marks] table[col sep=comma] {data/small_hexagon_low/triangle_count_around_vertex_stds.csv};
        \addplot[mark=text, text mark={\scalebox{\markscale}{\tetlo}}, only marks] table[col sep=comma] {data/small_tetrahedron_low/triangle_count_around_vertex_stds.csv};
        \addplot[mark=text, text mark={\scalebox{\markscale}{\trilo}}, only marks] table[col sep=comma] {data/small_triangle_low/triangle_count_around_vertex_stds.csv};
        \addplot[
            thick,
            domain=5:300,
            samples=2
        ] {0.03*x^(1.5)}
        node[pos=0.4, below right] {$0.03 \cdot n^{\frac{3}{2}}$};
        \end{loglogaxis}
    \end{tikzpicture}
    
\caption{Log-log plots of the \emph{standard deviations} of four Lipschitz observables of ERGM samples.
The power-law dependences agree with the guarantees of Theorem \ref{thm:lipschitzconcentration_informal}.
Note that the somewhat degenerate behavior of datasets 4 and 6 likely arises from the fact that $p^*$ is very close to $1$.
}
\label{fig:sim_concentration}
\end{figure}

Let us discuss how the fluctuation bounds guaranteed by Theorem \ref{thm:lipschitzconcentration_informal} compare with the
observed fluctuations.
For this, we need to calculate the norms of the various Lipschitz vectors, as Theorem \ref{thm:lipschitzconcentration_informal}
guarantees that a $v$-Lipschitz observable has standard deviation bounded by
$D \cdot \min \{ \sqrt{\| v \|_1 \| v \|_\infty}, n \| v \|_\infty \}$
for some global constant $D$ which depends on the specification of the ERGM, including the parameters $\bm\beta$
and the choice of metastable well $p^*$.

Although we do not attempt to give bounds on the constant $D$ in this article, it is interesting to note from our simulations
that whatever value it may take is not optimal for all Lipschitz observables simultaneously.
Indeed, let us just consider the global edge and triangle counts, which both are controlled via Theorem \ref{thm:lipschitzconcentration_informal}
to have standard deviation bounded by $D n \| v \|_\infty$.
For the global edge count this is $D n$, and for the global triangle count it is $D n(n-2) \approx D n^2$.
However, note that the constants appearing in the first and third plots are an order of magnitude apart from each other.

This discrepancy can be explained by noting that the removal of an edge typically \emph{does not} destroy the maximal number,
$n-2$, of triangles.
This is because the density of the graph is typically close to $p^*$, meaning that removing an edge will instead typically
destroy close to $(p^*)^2 n$ triangles.
Thus, the true fluctuations come from replacing $\| v \|_\infty$ by this true typical change.
The local edge and triangle counts differ by a similar ratio, which can also be understood using the same reasoning.
This phenomenon is utilized in various applications in follow-up work by the author \cite{winstein2025quantitative,was}.
\subsection{Coupling the ERGM and the Erd\H{o}s--R\'enyi model}
\label{sec:simulations_wasserstein}

Next, we turn to Theorem \ref{thm:wasserstein_informal}, the bound on the Wasserstein distance between $\mu$
and $\cG(n,p^*)$.
For this, again use datasets 1 through 6, but for each sample from the ERGM metastable well we have additionally
generated a sample from the corresponding Erd\H{o}s--R\'enyi model $\cG(n,p^*)$ by coupling each edge update in the Glauber
dynamics optimally (see Section \ref{sec:applications_wasserstein} for more details on this coupling).
Figure \ref{fig:sim_wass_graphs} shows a randomly selected pair of these samples from dataset 1 with $n=64$, highlighting the difference
between the two graphs.

\begin{figure}
\begin{center}
    \includegraphics[align=c,scale=0.18]{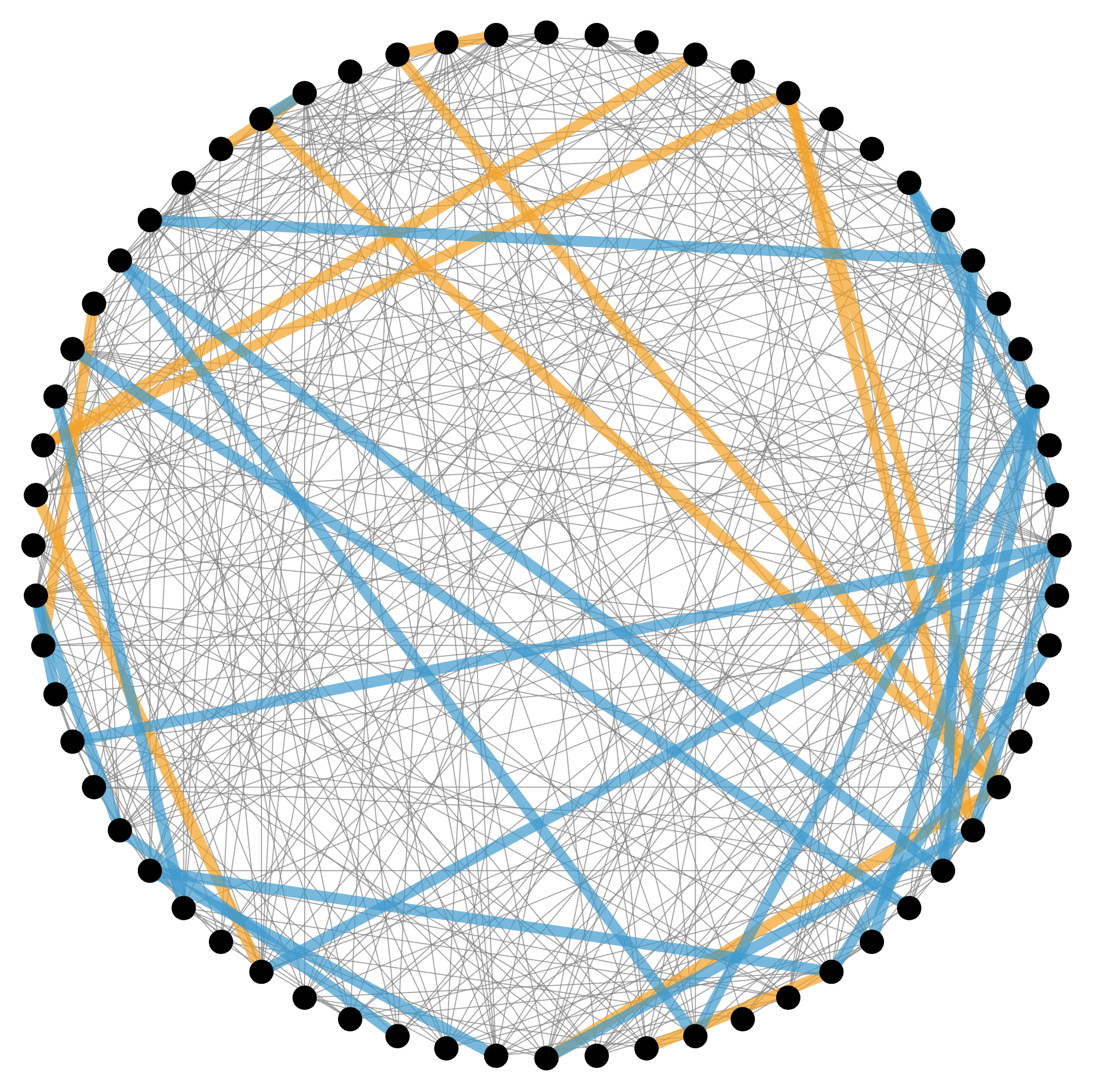}
    \hspace{15mm}
    \includegraphics[align=c,scale=0.18]{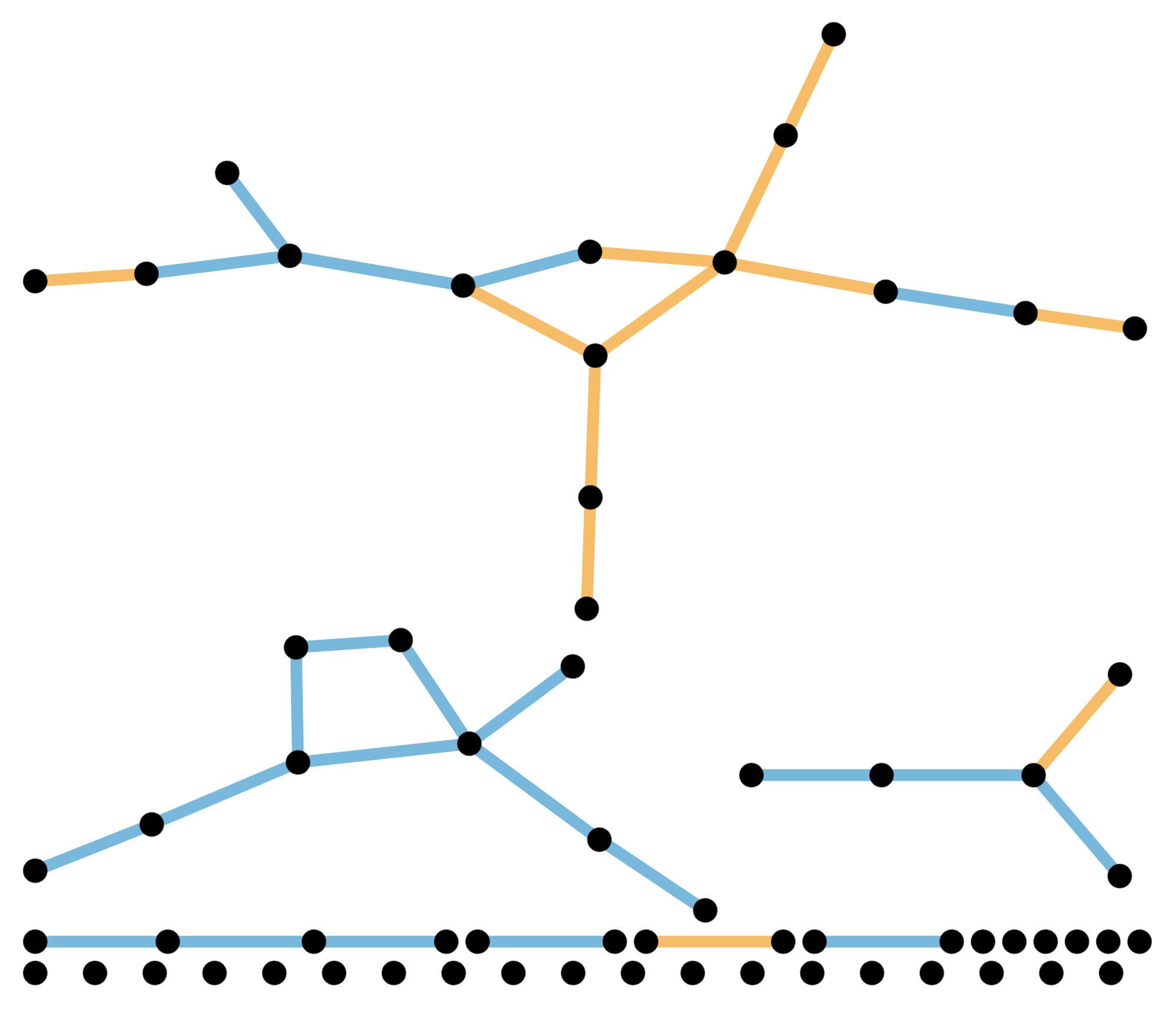}
\end{center}
\caption{Left: a $64$-vertex ERGM sample from dataset 1, overlaid with a coupled sample from $\cG(64,p^*)$.
Grey edges appear in both graphs, blue edges appear only in the ERGM, and orange edges appear only in the
Erd\H{o}s--R\'enyi graph.
Right: the graph structure of the discrepancy edges.
Notice that at first glance it appears more clustered than one might expect from a typical graph with this many vertices
and edges.}
\label{fig:sim_wass_graphs}
\end{figure}

If we let $X$ denote a sample from a metastable well in an ERGM, and let $Y$ denote the corresponding Erd\H{o}s--R\'enyi sample,
both thought of as elements of $\{0,1\}^{\edgeset}$, we can consider the \emph{signed difference graph} $Z = X - Y \in \{-1,0,1\}^{\edgeset}$.
The Hamming distance is exactly the sum $\sum_e |Z(e)|$.
We also consider the sum $\sum_e Z(e)$, which we term the \emph{signed discrepancy}, and which is just the difference in
edge count between $X$ and $Y$.
Finally, we also consider the \emph{difference graph} $|Z| \in \{0,1\}^{\edgeset}$ which is just the signed difference graph
with the signs erased, i.e.\ all $-1$s turned into $1$s.
In Figure \ref{fig:sim_wass_plots}, we plot the average Hamming distance and signed discrepancy, as well as the negative of the 
signed discrepancy, and finally the \emph{clustering coefficient} of the difference graph $|Z|$, which will be discussed below.

\begin{figure}
\centering
\def\length{4.5cm}
\def\spacing{1mm}
\def\markscale{0.5}
    \begin{tikzpicture}
        \begin{loglogaxis}[
            title={Hamming distance},
            width=\length,
            height=\length,
            font=\small,
            grid=none
        ]
        \addplot[mark=text, text mark={\scalebox{\markscale}{\hexhi}}, only marks] table[col sep=comma] {data/small_hexagon_high/hamming_distance_means.csv};
        \addplot[mark=text, text mark={\scalebox{\markscale}{\tethi}}, only marks] table[col sep=comma] {data/small_tetrahedron_high/hamming_distance_means.csv};
        \addplot[mark=text, text mark={\scalebox{\markscale}{\trihi}}, only marks] table[col sep=comma] {data/small_triangle_high/hamming_distance_means.csv};
        \addplot[mark=text, text mark={\scalebox{\markscale}{\hexlo}}, only marks] table[col sep=comma] {data/small_hexagon_low/hamming_distance_means.csv};
        \addplot[mark=text, text mark={\scalebox{\markscale}{\tetlo}}, only marks] table[col sep=comma] {data/small_tetrahedron_low/hamming_distance_means.csv};
        \addplot[mark=text, text mark={\scalebox{\markscale}{\trilo}}, only marks] table[col sep=comma] {data/small_triangle_low/hamming_distance_means.csv};
        \addplot[
            thick,
            domain=5:300,
            samples=2
        ] {0.05*x^(1.5)}
        node[pos=0.6, above left] {$0.05 \cdot n^{\frac{3}{2}}$};
        \end{loglogaxis}
    \end{tikzpicture}
    \hspace{\spacing}
    \begin{tikzpicture}
        \begin{loglogaxis}[
            title={$+$signed discrepancy},
            width=\length,
            height=\length,
            font=\small,
            grid=none
        ]
        \addplot[mark=text, text mark={\scalebox{\markscale}{\hexlo}}, only marks] table[col sep=comma, x=x, y expr=(\thisrow{y})] {data/small_hexagon_low/added_minus_removed_means.csv};
        \addplot[mark=text, text mark={\scalebox{\markscale}{\tetlo}}, only marks] table[col sep=comma, x=x, y expr=(\thisrow{y})] {data/small_tetrahedron_low/added_minus_removed_means.csv};
        \addplot[mark=text, text mark={\scalebox{\markscale}{\trilo}}, only marks] table[col sep=comma, x=x, y expr=(\thisrow{y})] {data/small_triangle_low/added_minus_removed_means.csv};
        \addplot[
            thick,
            domain=5:300,
            samples=2
        ] {0.25*x^(1)}
        node[pos=0.6, above left] {$0.25 \cdot n$};
        \end{loglogaxis}
    \end{tikzpicture}
    \hspace{\spacing}
    \begin{tikzpicture}
        \begin{loglogaxis}[
            title={$-$signed discrepancy},
            width=\length,
            height=\length,
            font=\small,
            grid=none
        ]
        \addplot[mark=text, text mark={\scalebox{\markscale}{\hexhi}}, only marks] table[col sep=comma, x=x, y expr=-(\thisrow{y})] {data/small_hexagon_high/added_minus_removed_means.csv};
        \addplot[mark=text, text mark={\scalebox{\markscale}{\tethi}}, only marks] table[col sep=comma, x=x, y expr=-(\thisrow{y})] {data/small_tetrahedron_high/added_minus_removed_means.csv};
        \addplot[mark=text, text mark={\scalebox{\markscale}{\trihi}}, only marks] table[col sep=comma, x=x, y expr=-(\thisrow{y})] {data/small_triangle_high/added_minus_removed_means.csv};
        \addplot[mark=text, text mark={\scalebox{\markscale}{\tetlo}}, only marks] table[col sep=comma, x=x, y expr=-(\thisrow{y})] {data/small_tetrahedron_low/added_minus_removed_means.csv};
        \addplot[
            thick,
            domain=5:300,
            samples=2
        ] {0.25*x^(1)}
        node[pos=0.5, above left] {$0.25 \cdot n$};
        \end{loglogaxis}
    \end{tikzpicture}
    \hspace{\spacing}
    \begin{tikzpicture}
        \begin{loglogaxis}[
            title={clustering coefficient},
            width=\length,
            height=\length,
            font=\small,
            grid=none,
            y tick label style={
                /pgf/number format/fixed,
                /pgf/number format/precision=0
            },
            scaled y ticks=false
        ]
        \addplot[mark=text, text mark={\scalebox{\markscale}{\hexhi}}, only marks] table[col sep=comma] {data/small_hexagon_high/average_clustering_both_means.csv};
        \addplot[mark=text, text mark={\scalebox{\markscale}{\tethi}}, only marks] table[col sep=comma] {data/small_tetrahedron_high/average_clustering_both_means.csv};
        \addplot[mark=text, text mark={\scalebox{\markscale}{\trihi}}, only marks] table[col sep=comma] {data/small_triangle_high/average_clustering_both_means.csv};
        \addplot[mark=text, text mark={\scalebox{\markscale}{\hexlo}}, only marks] table[col sep=comma] {data/small_hexagon_low/average_clustering_both_means.csv};
        \addplot[mark=text, text mark={\scalebox{\markscale}{\tetlo}}, only marks] table[col sep=comma] {data/small_tetrahedron_low/average_clustering_both_means.csv};
        \addplot[mark=text, text mark={\scalebox{\markscale}{\trilo}}, only marks] table[col sep=comma] {data/small_triangle_low/average_clustering_both_means.csv};
        \addplot[
            thick,
            domain=5:300,
            samples=2
        ] {0.01*x^(-0.5)}
        node[pos=0.25, above right] {$0.01 \cdot n^{-\frac{1}{2}}$};
        \end{loglogaxis}
    \end{tikzpicture}
    
\caption{
Left: log-log plot of the average Hamming distance between samples $X$ of the ERGM and $Y$ from $\cG(n,p^*)$,
validating the $n^{3/2}$ power law guaranteed by Theorem \ref{thm:wasserstein_informal}.
Note that the degenerate behavior of datasets 4 and 6 likely arises due to the closeness of $p^*$ to $1$.
Middle: log-log plot of the average signed discrepancy between samples, as well as its negative on a separate log-log plot.
Right: log-log plot of the average clustering coefficient of the difference graph $|Z|$.
Note that some data points from certain datasets are missing as the corresponding graphs had a clustering coefficient of
$0$ due to the absence of triangles.}
\label{fig:sim_wass_plots}
\end{figure}

In accordance with Theorem \ref{thm:wasserstein_informal}, we observe the $n^{\frac{3}{2}}$ power law in average Hamming distance
under the coupling we consider, which should be close to optimal and give a good approximation of the \emph{Wasserstein distance}
between the ERGM and the corresponding Erd\H{o}s--R\'enyi model.
Note that Theorem \ref{thm:wasserstein_informal} actually only gives an upper bound on the Wasserstein distance of the form
$n^{\frac{3}{2}} \sqrt{\log n}$.
However, as mentioned below the statement of Theorem \ref{thm:wasserstein_informal},
in follow-up work by the author \cite{was} the $\sqrt{\log n}$ factor will be removed and a matching lower bound will be proved.

This improvement to theorem \ref{thm:wasserstein_informal} immediately implies via the triangle
inequality that the signed discrepancy is of order at most $n^{\frac{3}{2}}$ as well.
However, in our plots we can see that the signed discrepancy appears to be of order $n$ at most, and thus the marginal probability
of an edge should actually be $p^* + O(n^{-1})$ as opposed to the bound $p^* + O(n^{-1/2} \sqrt{\log n})$ which we obtain
in Proposition \ref{prop:marginal} below as a consequence of Theorem \ref{thm:wasserstein_informal}.
We state this as the following conjecture for consistency with the literature, but note that
since the first version of this article was posted on arXiv, this conjecture was resolved in the subcritical regime
by \cite{fang2025conditional}, and it will also be resolved in metastable wells in the supercritical regime in follow-up work by 
the author \cite{was}.

\begin{conjecture}[Resolved!]
For any $e \in \edgeset$, the probability that $X \sim \mu$ contains $e$ is $p^* + O(n^{-1})$.
\end{conjecture}

Finally, we turn to the structure of the difference graph $|Z|$.
We can observe in the right side of Figure \ref{fig:sim_wass_graphs} a difference graph from dataset 1
which appears to the eye more ``clustered'' than a typical graph with the same number of vertices and edges.
In the right plot of Figure \ref{fig:sim_wass_plots}, we have calculated the average \emph{clustering coefficient} of the
difference graph, which is the probability that when a uniform vertex $u$ is chosen and two random edges $\{u,v\}$
and $\{u,w\}$ are chosen, that $\{v,w\}$ is an edge in the graph.
Importantly, the edges $\{u,v\}$ and $\{v,w\}$ are chosen only from among the edges in the graph; not just any \emph{potential}
edge may be chosen.
This statistic is commonly used as a measure of the clustered nature of the graph, since intuitively a cluster is a set of
vertices with lots of mutual connections within the cluster, leading to more triangles.

Since there are $O(n^{3/2})$ edges in the difference graph, its density is $O(n^{-1/2})$.
In a model Erd\H{o}s--R\'enyi graph $\cG(n,n^{-1/2})$ with density of the same order, one would expect the average
clustering coefficient to be of order $n^{-1/2}$, since it is just the probability that a particular edge (the one
completing a triangle) is present in the graph and all edges behave independently.
However, the right plot of Figure \ref{fig:sim_wass_plots} suggests that in some cases the average clustering coefficient of the
difference graph may be much larger than $n^{-1/2}$.
This indicates that under the optimal coupling, the ERGM and Erd\H{o}s--R\'enyi model differ by the addition or removal
of \emph{many} edges in a \emph{very small} region of the graph.
However, this behavior is not consistent across the datasets and it is possible that this depends more sensitively on the
details of the ERGM specification.
Moreover, the clustering coefficient may not be the correct statistic to capture this behavior, as it is $0$ when there
are no triangles in the graph.
In any case, an explanation of this phenomenon seems out of reach of our current techniques, so we leave this as a
somewhat vague question for future work.

\begin{conjecture}
For certain ERGM specifications, under some coupling with a corresponding Erd\H{o}s--R\'enyi model,
the difference between the two sampled graphs is typically constrained to a localized region.
\end{conjecture}
\subsection{Approximating edge counts by normal distributions}
\label{sec:simulations_clt}

Finally we turn to Theorem \ref{thm:clt_informal}, and to the subject of central limit theorems for
edge counts more broadly.
To get a more accurate picture of the distribution of the edge count, we generated many more samples in datasets 7 and 8
($8192$ as compared to $128$ in the prior datasets).
We considered the global edge count as well as the local edge count in our simulations, but we remark that
Theorem \ref{thm:clt_informal} does not actually apply to either statistic, since it only applies for the edge count in
a set of potential edges with size much smaller than $n$.
However, since the first version of this article was posted, \cite{fang2024normal} showed a quantitative
central limit theorem for the global edge count in the \emph{subcritical} regime.
This was extended by the author \cite{winstein2025quantitative} to metastable wells in the supercritical regime, and in addition a
quantitative CLT was derived for the local edge count (vertex degree); moreover, the bounds given in \cite{fang2024normal} were improved in
\cite{winstein2025quantitative}.

Note however that neither of these works achieves the optimal bounds.
According to our simulations (see Figure \ref{fig:sim_clt}), the Kolmogorov--Smirnov statistic for the empirical distribution
of the global edge count should be of order $n^{-1}$, while \cite{fang2024normal,winstein2025quantitative} can only achieve bounds of order $n^{-1/2}$.
In addition, for the local edge count (vertex degree) one should obtain a bound of order $n^{-1/2}$ while \cite{winstein2025quantitative} 
can only achieve bounds of order $n^{-1/4}$.
It remains an important open problem to bridge the gap between the rigorous bounds and the expected behavior which is closer to the behavior guaranteed
(for instance by the Berry--Esseen theorem) for sums of i.i.d.\ random variables.

\begin{figure}
\centering
\def\length{5cm}
\def\spacing{1mm}
\def\markscale{0.5}
    \begin{tikzpicture}
        \begin{axis}[
            title = {global edge count ($n=128$)},
            ybar,
            bar width=0.5mm,
            font=\small,
            width=\length + 0.5cm,
            height=\length,
        ]
        \addplot+[
            hist={
                bins=25
            },
            fill=blue!30,
            fill opacity=0.5,
            draw=blue
        ] table[
            col sep=comma,
            y = value
        ] {data/large_triangle_low/total_edge_count_normalized_090.csv};
        \addplot+[
            hist={
                bins=25
            },
            fill=red!30,
            fill opacity=0.5,
            draw=red
        ] table[
            col sep=comma,
            y = value
        ] {data/large_tetrahedron_low/total_edge_count_normalized_090.csv};
        \end{axis}
    \end{tikzpicture}
    \hspace{\spacing + 1mm}
    \begin{tikzpicture}
        \begin{loglogaxis}[
            title={KSS (global edge count)},
            width=\length,
            height=\length,
            font=\small,
            grid=none
        ]
        \addplot[mark=text, text mark={\scalebox{\markscale}{\tetlo}}, only marks] table[col sep=comma] {data/large_tetrahedron_low/total_edge_count_kss.csv};
        \addplot[mark=text, text mark={\scalebox{\markscale}{\trilo}}, only marks] table[col sep=comma] {data/large_triangle_low/total_edge_count_kss.csv};
        \addplot[
            thick,
            domain=5:150,
            samples=2
        ] {1*x^(-1)}
        node[pos=0.8, below left] {$n^{-1}$};
        \end{loglogaxis}
    \end{tikzpicture}
    \hspace{\spacing}
    \begin{tikzpicture}
        \begin{loglogaxis}[
            title={KSS (local edge count)},
            width=\length,
            height=\length,
            font=\small,
            grid=none
        ]
        \addplot[mark=text, text mark={\scalebox{\markscale}{\tetlo}}, only marks] table[col sep=comma] {data/large_tetrahedron_low/edge_count_around_vertex_kss.csv};
        \addplot[mark=text, text mark={\scalebox{\markscale}{\trilo}}, only marks] table[col sep=comma] {data/large_triangle_low/edge_count_around_vertex_kss.csv};
        \addplot[
            thick,
            domain=5:150,
            samples=2
        ] {0.5*x^(-0.5)}
        node[pos=0.8, below left] {$0.5n^{-0.5}$};
        \end{loglogaxis}
    \end{tikzpicture}

\caption{Left: histograms of the global edge counts of $8192$ samples from the $90$-vertex ERGM, shifted by the sample mean and scaled by
the sample standard deviation; the blue histogram is dataset 7 and the red histogram is dataset 8.
Middle: log-log plot of the Kolmogorov--Smirnov statistic (distance to a standard normal distribution)
for the rescaled empirical distribution of the global edge count.
Note that the deviation from the trendline in dataset 7 merely indicates a lack of expressitivty arising from the sample size.
Right: log-log plot the Kolmogorov--Smirnov statistic for the rescaled empirical distribution of the local edge count.}
\label{fig:sim_clt}
\end{figure}
\section{Review of the exponential random graph model}
\label{sec:review}

In this section we provide a more detailed review of past work on the exponential random graph model (ERGM) which
is relevant for the current article.
The state space is $\Omega = \{0,1\}^{\binom{[n]}{2}}$, where $\binom{[n]}{2}$ denotes the set of all edges
in the complete graph $K_n$ on $n$ vertices; in other words, the state space is identified with the
set of simple graphs on $n$ vertices.

As mentioned in the introduction, for a fixed finite graph $G = (V,E)$ and a state $x \in \Omega$, we denote by $N_G(x)$ the number of \emph{labeled}
subgraphs of $x$ which are isomorphic to $G$, i.e.\ the number of maps $\sigma : V \to [n]$ for which $X(\{\sigma(u),\sigma(v)\}) = 1$
whenever $\{u,v\} \in E$.
In particular, when the graph $G$ has symmetries, each \emph{unlabeled} copy of $G$ in $x$ will be counted multiple times.
Again, we fix finite graphs $G_0, \dotsc, G_K$ with $G_0$ being a single edge,
as well as parameters $\beta_0 \in \R$ and $\beta_1, \dotsc, \beta_K \geq 0$.
Then the Hamiltonian of the ERGM is
\begin{equation}
	H(x) \coloneqq \sum_{i=0}^K \frac{\beta_i}{n^{|V_i|-2}} N_{G_i}(x),
\end{equation}
where $G_i = (V_i,E_i)$.
We will use $\tilde{\mu}$ to denote the full Gibbs measure with this Hamiltonian, i.e.\ we have $\tilde{\mu}(x) \propto \Exp{H(x)}$.
Note that in most later sections we will use $\mu$ (without a tilde)
to denote this ERGM Gibbs measure \emph{conditioned} on a particular subset of $\Omega$, namely a small cut distance ball around
a constant graphon, to be discussed in the next subsection.

\subsection{Large deviations principle}
\label{sec:review_ldp}

In \cite{chatterjee2013estimating} a large deviations principle was shown for a more general class of random graph models including the above
definition, in terms of the cut distance of graphs and graphons.
Here we give an extremely brief review of these definitions in order to state the large deviations principle for our present purposes;
for a thorough discussion of graphons and the cut distance, one may consult \cite{lovasz2012large}.

A \emph{graphon} is a symmetric measurable function $W : [0,1]^2 \to [0,1]$, 
meant to generalize the notion of an adjacency matrix $(A_{i,j})_{i,j=1}^n$ of a simple graph $x$, which can be interpreted as such a function
by setting $W_x(x,y) = A_{i,j}$ whenever $(x,y) \in \left[\frac{i-1}{n}, \frac{i}{n} \right) \times \left[\frac{j-1}{n}, \frac{j}{n} \right)$.
Notice however that this interpretation is not injective, since a graphon has no well-defined number of vertices, and, for example, subdividing
the squares leads to a different adjacency matrix for a graph with twice as many vertices which nonetheless has the same graphon representation.
Just as we consider graphs to be isomorphic when we permute their vertices, we identify $W(s,t)$ and $W(\sigma(s), \sigma(t))$, whenever
$\sigma : [0,1] \to [0,1]$ is an isomorphism of measure spaces (up to measure zero).

For a fixed graph $G = (V,E)$, we can naturally define \emph{subgraph densities} for graphons as follows:
\begin{equation}
	t(G,W) \coloneqq \int_0^1 \dotsb \int_0^1 \prod_{\{u,v\} \in E} W(s_u, s_v) \prod_{v \in V} ds_v.
\end{equation}
This naturally extends the notion of subgraph counts defined above for graphs.
Indeed, when $W = W_X$ for some graph $X$ on $n$ vertices, we have $N_G(X) = n^{|V|} t(G,W_X)$.

Next we define the cut distance, which is the following distance on the space of graphons:
\begin{equation}
\label{eq:cutdist}
	\db(W,W') \coloneqq \inf_{\substack{\sigma : [0,1] \to [0,1] \\ \text{measure-preserving} \\ \text{bijection}}}
    \sup_{S, T \sse [0,1]} \left| \int_S \int_T (W(s,t) - W'(\sigma(s), \sigma(t)) )\,dt \,ds \right|.
\end{equation}
For the present work it is not necessary to internalize the above definition.
One should be aware however, that convergence of graphons in the cut distance is equivalent to the convergence of all subgraph densities,
namely
\begin{equation}
	\db(W_n,W) \to 0
	\qquad \Leftrightarrow \qquad
	t(G,W_n) \to t(G,W) \text{ for all finite graphs } G.
\end{equation}
The above equivalence can be made quantitative \cite{lovasz2006limits,borgs2008convergent},
although this will not be relevant for the present work.

The simplest example of a graphon is the \emph{constant graphon} with constant $p \in [0,1]$, namely $W_p(x,y) = p$ for all $x,y \in [0,1]$.
This is the limit of Erd\H{o}s--R\'enyi random graphs $G_n \sim \cG(n,p)$, in the sense that
\begin{equation}
	\db(W_{G_n}, W_p) \xrightarrow{\;\P\;} 0.
\end{equation}
Note that in the cut distance \eqref{eq:cutdist}, when one of the graphons is constant, we may ignore the infimum over measure preserving
bijections, although again the precise formula for the cut distance will not be important for the present work.

The large deviations principle of \cite{chatterjee2013estimating} states that any sample $X$ from the ferromagnetic ERGM is typically close in
cut distance to a constant graphon $W_p$ for some value of $p$.
To heuristically determine which values of $p$ could appear here, notice that in order for $W_X$ to be close to $W_p$ in cut distance,
the subgraph densities $t(G_i,W_X)$ must be close to $p^{|E_i|}$, and so the Hamiltonian $H(X)$ must be close to
\begin{equation}
	n^2 \sum_{i=0}^K \beta_i p^{|E_i|}.
\end{equation}
Now, at the exponential (in $n^2$) scale, the number of graphs $X$ which look like they could have been sampled from $\cG(n,p)$ is
\begin{equation}
	\exp\left( \binom{n}{2} (- p \log p - (1-p) \log (1-p)) \right),
\end{equation}
since there are $\binom{n}{2}$ possible edge locations.
So, the relevant values of $p$ are those which maximize
\begin{equation}
\label{eq:ldef}
	L_{\bm\beta} (p) \coloneqq \sum_{i=0}^K \beta_i p^{|E_i|} - I(p),
\end{equation}
where $I(p) = \tfrac{1}{2} p \log p + \tfrac{1}{2} (1-p) \log(1-p)$, as defined in the introduction.
Let $M_{\bm\beta}$ denote the set of global maximizers $p^*$ of the function $L_{\bm\beta}$ on the interval $[0,1]$
(see Figure \ref{fig:ergm_pstar}).
The following theorem is the large deviations principle for the ferromagnetic ERGM.

\begin{theorem}[Theorems 3.2 and 4.1 of \cite{chatterjee2013estimating}]
\label{thm:LDP} 
Let $X$ be sampled from the unconditioned ferromagnetic ERGM measure $\tilde{\mu}$ and define $M_{\bm\beta}$ as above.
For any $\eta > 0$, there are constants $c(\eta), C(\eta) \geq 0$ for which
\begin{equation}
    \tilde{\mu} \left[\inf_{p^* \in M_{\bm\beta}} \db(W_X,W_{p^*}) > \eta\right] \leq C(\eta) \Exp{-c(\eta) n^2}.
\end{equation}
\end{theorem}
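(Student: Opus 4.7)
The plan is to invoke the general large deviations framework for ERGM measures on the space of graphons, originally developed by Chatterjee and Varadhan for Erd\H{o}s--R\'enyi graphs and extended in \cite{chatterjee2013estimating}. The key ingredients are: compactness of the space of reduced graphons in the cut distance, an LDP for the empirical graphon of $\cG(n, 1/2)$ at speed $n^2$ with rate function $I_0(W) = \int_0^1 \int_0^1 I(W(x,y)) \, dx \, dy$, and continuity in the cut distance of each subgraph density $W \mapsto t(G_i, W)$, and hence of the energy $T(W) := \sum_{i=0}^K \beta_i t(G_i, W)$.

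First, I would apply Varadhan's tilting lemma. Since $\mu$ is an exponential tilt of a symmetric reference measure on $\Omega$ by essentially $n^2 \cdot T(W_X)$ (using $N_{G_i}(X) = n^{|V_i|} t(G_i, W_X)$ combined with the normalization $n^{-(|V_i|-2)}$ built into the Hamiltonian, with the residual $n^2$ accounting for the number of possible edges), one obtains an LDP for $W_X$ under $\mu$ at speed $n^2$ with rate function
\begin{equation}
    I_{\bm\beta}(W) = I_0(W) - T(W) - F_{\bm\beta},
\end{equation}
where $F_{\bm\beta} = \sup_W [T(W) - I_0(W)]$ is the free energy.

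The second step, and the main technical obstacle, is to identify the minimizers of $I_{\bm\beta}$ as exactly the constant graphons $W_{p^*}$ with $p^* \in M_{\bm\beta}$. One direction is immediate: restricting to constant graphons gives $F_{\bm\beta} \geq \sup_{p \in [0,1]} L_{\bm\beta}(p)$, since $I_0(W_p) = I(p)$ and $t(G_i, W_p) = p^{|E_i|}$. The nontrivial direction, that constant graphons are the \emph{only} minimizers in the ferromagnetic case, combines a generalized H\"older (Finner) inequality for subgraph densities with the strict convexity of $I_0$ (which, by Jensen's inequality, forces $I_0(W) > I(p)$ strictly whenever $W$ has edge density $p$ but is not equal to $W_p$ up to measure-preserving rearrangement). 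The delicate point is balancing the gain in $T(W)$ against the strict gain in $I_0(W)$ for non-constant $W$; this is precisely the content of the reduction carried out in \cite{chatterjee2013estimating}.

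Finally, with the minimizers identified, the stated concentration bound follows from the LDP upper bound applied to the closed set
\begin{equation}
    \mathcal{A}_\eta := \left\{ W : \inf_{p^* \in M_{\bm\beta}} \db(W, W_{p^*}) \geq \eta \right\}.
\end{equation}
Since $\mathcal{A}_\eta$ is a closed subset of the compact reduced graphon space and is disjoint from the set of minimizers $\{W_{p^*} : p^* \in M_{\bm\beta}\}$, lower semicontinuity of $I_{\bm\beta}$ yields $c(\eta) := \inf_{W \in \mathcal{A}_\eta} I_{\bm\beta}(W) > 0$, and the LDP upper bound gives $\mu(\mathcal{A}_\eta) \leq C(\eta) \exp(-c(\eta) n^2)$ for some finite $C(\eta)$. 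The original event is contained in $\mathcal{A}_\eta$, so this completes the proof.
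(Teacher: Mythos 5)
This theorem is stated in the paper purely as a citation of Theorems 3.2 and 4.1 of \cite{chatterjee2013estimating}, with no in-paper proof to compare against. Your sketch is a faithful reconstruction of the argument from that reference: the Chatterjee--Varadhan LDP for the empirical graphon combined with Varadhan tilting, identification of the ferromagnetic minimizers as the constant graphons $\{W_{p^*} : p^* \in M_{\bm\beta}\}$, and then the LDP upper bound on the closed set $\mathcal{A}_\eta$ via compactness of the reduced graphon space and lower semicontinuity of the rate function. One sign error: Varadhan tilting gives the rate function $I_{\bm\beta}(W) = I_0(W) - T(W) + F_{\bm\beta}$ (plus, not minus), so that $I_{\bm\beta}$ is nonnegative and vanishes exactly on the minimizers; with your minus sign it would evaluate to $-2F_{\bm\beta}$ at a minimizer rather than zero. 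It is also worth stating explicitly that one needs the zero set of $I_{\bm\beta}$ to be \emph{exactly} $\{W_{p^*} : p^* \in M_{\bm\beta}\}$, not merely that constant graphons achieve the supremum; otherwise $\inf_{\mathcal{A}_\eta} I_{\bm\beta}$ could vanish. The cited reference does establish this stronger uniqueness in the ferromagnetic case, so the argument goes through.
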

\subsection{Glauber dynamics}
\label{sec:review_dynamics}

Now let us take a more dynamical perspective, resampling edges in $X$ via Glauber dynamics.
At each time step, a uniformly random edge $e \in \binom{[n]}{2}$ is chosen to be resampled, and starting from $X$ the subsequent state
is either $X^{+e}$ or $X^{-e}$, i.e.\ the same configuration but with $X_e$ fixed to be $1$ or $0$ respectively.
On the event that an edge $e$ is selected to be resampled, the probability we end up with $X^{+e}$ is
\begin{equation}
    \phi(\partial_e H(X)),
\end{equation}
where $\phi(z) = \fr{e^z}{1+e^z}$ and $\partial_e H(X) = H(X^{+e}) - H(X^{-e})$.
Now by the definition of $H$, we have
\begin{equation}
	\partial_e H(X) = \sum_{i=0}^K \fr{\beta_i}{n^{|V_i|-2}} N_{G_i}(X,e),
\end{equation}
where $N_G(X,e) = N_G(X^{+e}) - N_G(X^{-e})$, which can also be calculated as the number of labeled copies of
$G$ in $X^{+e}$ which include the edge $e$.

Suppose now that $X$ is close to a sample from $\cG(n,p)$ in the sense that $\fr{1}{n^{|V_i|-2}} N_{G_i}(X,e) \approx 2 |E| p^{|E_i|-1}$ for all
$i = 0, \dotsc, K$ and all $e \in \binom{[n]}{2}$.
Note that this is not the same as $W_X$ being close to $W_p$ in the cut distance, because it requires the closeness of all relevant
\emph{local} subgraph densities.
Then, given that edge $e$ was selected to be resampled, the probability that we end up with $X^{+e}$ after one step of Glauber
dynamics is approximately
\begin{equation}
\label{eq:phidef}
	\phi_{\bm\beta}(p) \coloneqq \phi(\Psi_{\bm\beta}(p)),
\end{equation}
where
\begin{equation}
\label{eq:psidef}
	\Psi_{\bm\beta}(p) \coloneqq \sum_{i=0}^K 2 \beta_i |E_i| p^{|E_i|-1}.
\end{equation}
Since $\tilde{\mu}$ is stationary for the Glauber dynamics, this means that a sample $X \sim \tilde{\mu}$ should be close to $\cG(n,p)$ for some $p$ which
is an attracting fixed point of the function $p \mapsto \phi_{\bm\beta}(p)$, i.e.\ $p = \phi_{\bm\beta}(p)$ and $\phi_{\bm\beta}'(p) < 1$.

It may be easily verified that $p = \phi_{\bm\beta}(p)$ is equivalent to $L_{\bm\beta}'(p) = 0$, and $\phi_{\bm\beta}'(p) < 1$ is equivalent
to $L_{\bm\beta}''(p) < 0$, so $p$ is an attracting fixed point of $\phi_{\bm\beta}$ if and only if it is a local maximum fo
$L_{\bm\beta}$ with strictly negative second derivative.
If there is a unique local maximum of $L_{\bm\beta}$ with negative second derivative, or equivalently if there is a unique attracting fixed
point for $p \mapsto \phi_{\bm\beta}(p)$, then we say that the ERGM is in the \emph{subcritical} regime of parameters.
On the other hand, if there are at least two local maxima of $L_{\bm\beta}$ with negative second derivatives (i.e.\ at least two attracting
fixed points of $p \mapsto \phi_{\bm\beta}(p)$), then we say that the ERGM is in the \emph{supercritical} regime of parameters
(see Figure \ref{fig:ergm_regimes}).
If neither condition holds, the ERGM is in the \emph{critical} regime.
For future reference, we let $U_{\bm\beta} \sse M_{\bm\beta}$ denote the collection of all \emph{global} maxima $p^*$ of $L_{\bm\beta}$
for which $L_{\bm\beta}''(p^*) < 0$.
We remark that even in the supercritical regime, for most values of the parameters $\bm\beta$, both $U_{\bm\beta}$ and $M_{\bm\beta}$
are singletons.

Early studies of the ERGM focused on the subcritical regime; in \cite{bhamidi2008mixing} it was proved that the Glauber dynamics in the
subcritical regime exhibits rapid mixing, using a burn-in argument which allows the Markov chain to enter a set of configurations
in which the attractiveness of the unique fixed point can be used to obtain contraction.
In the same work, it was also shown that there is an exponential slowdown in the supercritical regime, due to the dynamics getting stuck
near an attractive fixed point which is not the unique \emph{global} maximum of $L_{\bm\beta}$.
\subsection{Concentration of measure}
\label{sec:review_concentration}

Since the Glauber dynamics is a local algorithm, the rapid mixing result can be used to obtain structural information
about the subcritical ERGM.
This was done in \cite{ganguly2024sub}, specifically by deriving concentration inequalities for Lipschitz observables of the ERGM
from the mixing result combined with a method of \cite{chatterjee2005concentration} which allows one to translate certain markers
of rapid mixing into concentration inequalities.
We remind the reader that for $v \in \R^{\binom{[n]}{2}}$, we say $f : \Omega \to \R$ is $v$-Lipschitz if
$|f(x^{+e}) - f(x^{-e})| \leq v_e$ for every $e \in \binom{[n]}{2}$ and every $x \in \Omega$.
The main result of \cite{ganguly2024sub} is the following concentration inequality.

\begin{theorem}[Theorem 1 of \cite{ganguly2024sub}]
\label{thm:conc_subcritical}
    For $X \sim \tilde{\mu}$, an unconditioned subcritical ERGM measure,
	There is a constant $c > 0$ depending only on $\tilde{\mu}$
    such that for large enough $n$, any $v$-Lipschitz $f : \Omega \to \R$, and any $\lambda \geq 0$,
	\begin{equation}
		\tilde{\mu}[|f(X) - \E_{\tilde{\mu}}[f(X)]| > \lambda] \leq 2 \Exp{- c \frac{\lambda^2}{\| v \|_\infty \| v \|_1}}.
	\end{equation}
\end{theorem}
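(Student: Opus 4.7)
The plan is to deploy the exchangeable-pair concentration framework of Chatterjee \cite{chatterjee2005concentration} using the Glauber dynamics of $\mu$ as the driving Markov chain, feeding in the subcritical rapid mixing of \cite{bhamidi2008mixing} as the quantitative input. In the subcritical regime, $L_{\bm\beta}$ has a unique attracting fixed point $p^*$, so after a short burn-in any starting configuration reaches a ``good set'' on which two coupled Glauber chains contract---exactly the sort of mixing marker the Chatterjee machinery converts into a Gaussian tail.

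Concretely, I would take the one-step exchangeable pair $(X, X')$ with $X \sim \mu$ and $X'$ obtained by running one step of Glauber dynamics, and define the antisymmetric function
\begin{equation}
F(X, X') = \sum_{t=0}^{\infty} \Econd{f(X_t) - f(X_t')}{X_0 = X,\; X_0' = X'},
\end{equation}
where $(X_t, X_t')$ are two copies of the chain coupled under the standard grand coupling (same edge and same uniform variable used at each step in both chains). Stationarity of $\mu$ gives $\Econd{F(X,X')}{X} = f(X) - \E_\mu[f]$, and then \cite[Theorem 3.3]{chatterjee2005concentration} reduces the theorem to a uniform bound on
\begin{equation}
\Delta(X) = \tfrac{1}{2}\, \Econd{|(f(X) - f(X')) F(X, X')|}{X}.
\end{equation}

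The heart of the argument is to bound $\Delta$. When $X$ and $X'$ differ at a single edge $e$, Lipschitzness gives $|f(X) - f(X')| \leq v_e$, and the grand coupling combined with the subcritical contraction yields $|F(X, X')| \lesssim v_e \cdot T_{\mathrm{mix}}$ on average, with $T_{\mathrm{mix}} = O(n^2 \log n)$. Averaging over the uniform choice of the resampled edge (probability $1/\binom{n}{2}$) gives
\begin{equation}
\Delta(X) \lesssim \frac{\log n}{n^4} \sum_e v_e^2 \;\leq\; \frac{\log n}{n^4} \, \|v\|_\infty \|v\|_1,
\end{equation}
which plugged into Chatterjee's theorem yields the claimed subgaussian tail at the scale $\sqrt{\|v\|_\infty \|v\|_1}$; the stray $\log n$ is absorbed into the constant $c$, or removed by using the exponential contraction rate in place of a crude mixing-time cutoff.

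The main obstacle is the coupling estimate itself: two Glauber chains launched from neighboring configurations are not automatically close at all future times, and one must run them into the ``good set'' of \cite{bhamidi2008mixing} on which the local subgraph counts $N_{G_i}(X, e)/n^{|V_i|-2}$ are close to $2|E_i|(p^*)^{|E_i|-1}$. Inside this good set the update rule is a contraction for the Hamming disagreement, and it is here that uniqueness of the attracting fixed point $p^*$ in the subcritical regime is essential; without it, the disagreement can persist for exponentially long, which is precisely why this theorem fails in the supercritical regime and why the present article is forced to develop a new concentration framework tailored to metastable wells.
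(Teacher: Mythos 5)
This theorem is cited from Ganguly and Nam \cite{ganguly2024sub} as background and is not proved in the present paper, but Section~\ref{sec:concentration_background} describes the intended strategy, and the proof of Theorem~\ref{thm:lipschitzconcentration} carries out the analogous computation. Your high-level plan---Chatterjee's Theorem~\ref{thm:chatterjee} with $F$ the infinite sum of expected differences, bounded via the subcritical burn-in and contraction of \cite{bhamidi2008mixing}---is the correct one and matches the paper's account of the Ganguly--Nam argument.

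However, the key estimate as you have written it has a gap. The claim $|F(X,X')| \lesssim v_e T_{\mathrm{mix}}$ is neither justified nor correct in form: under the coupled dynamics the initial single-edge disagreement at $e$ spreads to other edges, so each term $\left|\Econd{f(X_t)-f(X_t')}{\cdot}\right|$ must be controlled by $\|v\|_\infty \Econd{\dh(X_t,X_t')}{\cdot}$, not by $v_e$ alone. The correct route is to sum the geometric contraction series $\sum_t(1-\kappa/n^2)^t \asymp n^2$ to get $|F(X,X')| \lesssim \|v\|_\infty n^2$ uniformly in the chosen edge, and then
\begin{equation}
g(x) \lesssim \sum_e \frac{1}{\binom{n}{2}}\, v_e \cdot \|v\|_\infty n^2 \;\asymp\; \|v\|_\infty\|v\|_1,
\end{equation}
which is precisely the $V$ that plugs into Theorem~\ref{thm:chatterjee} to give the stated scale. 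Your claimed prefactor $\tfrac{\log n}{n^4}$ is also arithmetically inconsistent with your own setup: averaging $v_e\cdot v_e T_{\mathrm{mix}}$ with weight $1/\binom{n}{2}$ and $T_{\mathrm{mix}}\sim n^2\log n$ gives $\asymp \log n\,\|v\|_2^2$, with no factor $n^{-4}$. Had a genuine $n^{-4}$ appeared, Chatterjee's theorem would yield concentration at scale $\frac{\sqrt{\log n}}{n^2}\sqrt{\|v\|_\infty\|v\|_1}$, far below the theorem's $\sqrt{\|v\|_\infty\|v\|_1}$ and clearly impossible: for the total edge count this would say a sum of $\binom{n}{2}$ weakly dependent Bernoullis fluctuates by $O\bigl(\sqrt{\log n}/n\bigr)$. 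Finally, the extra $\log n$ cannot be absorbed into a constant $c$ independent of $n$; as you correctly observe, the remedy is to replace the crude mixing-time cutoff with the geometric series, which eliminates it.
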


This result was then used to obtain various structural information about the ERGM in the subcritical regime.
Namely, it was used to prove a central limit theorem for the number of edges in a sparse subcollections of possible edges, where no pair
of possible edges share a vertex \cite[Theorem 2]{ganguly2024sub}.
Additionally, the concentration result was used to derive a bound of order $n^{3/2} \sqrt{\log n}$ on the Wasserstein distance between 
the ERGM and $\cG(n,p^*)$, where $p^*$ is the unique local maximum of $L_{\bm\beta}$ \cite[Theorem 3]{ganguly2024sub}.

A more intricate study of the \emph{supercritical} regime came only more recently.
In \cite{bresler2024metastable}, it was shown that in this regime, when one restricts to a cut distance ball around one of the (possibly multiple)
global maximizers of $L_{\bm\beta}$, i.e.\ $p^* \in U_{\bm\beta}$, then the Glauber dynamics mixes rapidly \emph{when given a warm start}, 
sampling the initial state from the relevant Erd\H{o}s--R\'enyi model $\cG(n,p^*)$.

Note that the form of mixing derived in \cite{bresler2024metastable} is not the standard notion of rapid mixing which yields a total variation
distance which is exponentially small in terms of the number of multiples of the mixing time.
Indeed, as previously mentioned, even inside of the cut distance ball around $p^* \in U_{\bm\beta}$, in the supercritical regime there may 
still be small regions (with probability $e^{-\Theta(n)}$) which are metastable in the sense that the Glauber dynamics 
takes an exponentially long time to escape (see for instance \cite[Theorem 3.3]{bresler2024metastable}).
The Glauber dynamics started from $\cG(n,p^*)$ also takes exponentially long to \emph{reach} these regions, forcing the total variation
distance after \emph{any} subexponential amount of time to be at least $e^{-O(n)}$.

In the present article, we are nonetheless able to derive the concentration inequality Theorem \ref{thm:lipschitzconcentration_informal}, 
along the lines of Theorem \ref{thm:conc_subcritical}, from the metastable mixing results of \cite{bresler2024metastable}, answering a
question posed in that work.
As mentioned in the outline, the proof of Theorem \ref{thm:lipschitzconcentration_informal} appears in Section \ref{sec:lipschitz}.

Using Theorem \ref{thm:lipschitzconcentration_informal}, we are able to obtain structural information about the ERGM conditioned on a
cut distance ball around some $W_{p^*}$ for any $p^* \in U_{\bm\beta}$.
In particular, in Section \ref{sec:applications_wasserstein} we prove Theorem \ref{thm:wasserstein_informal} which gives a bound on the Wasserstein
distance between this measure an $\cG(n,p^*)$ under certain extra conditions on the ERGM.
In Section \ref{sec:applications_clt} we prove Theorem \ref{thm:clt_informal}, which gives a central limit theorem for certain small subcollections of
edges under any subcritical ERGM measure conditioned on any relevant cut distance ball.
\section{Concentration via metastable mixing}
\label{sec:concentration}

In this section we present two results which allow one to derive concentration inequalities
from various markers of metastable mixing.
The first, Theorem \ref{thm:chatterjee}, is a novel modification of \cite[Theorem 3.3]{chatterjee2005concentration}
that allows it to work with metastable mixing, and the second, Corollary \ref{cor:barbour},
follows easily from \cite[Theorem 2.1]{barbour2022long} which is already relatively suited to metastable mixing.
Each of these results has its own benefits and drawbacks, and it is only through their combination
that we are able to derive our main result, Theorem \ref{thm:lipschitzconcentration_informal}.
Namely, Corollary \ref{cor:barbour} provides an optimal result for global observables such as the total number
of edges, while Theorem \ref{thm:chatterjee} provides a better rate of concentration for local observables
such as the degree of a vertex but gives a vacuous bound for global observables.
The control of observables at both scales, especially the local scale, will be crucial for our applications in
Section \ref{sec:applications}.

We have framed Theorem \ref{thm:chatterjee} and Corollary \ref{cor:barbour} below to look somewhat similar, mainly to
allow for our main techniques in Section \ref{sec:lipschitz} to apply seamlessly with both theorems.
However, note that the proofs of these theorems are rather different and it remains an interesting open
question to combine them in a way that alleviates the drawbacks of both techniques while retaining all of the benefits.

Let us briefly give some intuition for these results.
Heuristically, if one has a fast sampling algorithm for $X$ such that at each step, the value of an observable does not change too much,
then one would expect that observable to exhibit some form of concentration, since multiple trials of the sampling algorithm cannot
yield values of the observable which differ by too much.
This idea can be seen even in the most classical concentration inequalities, such as Azuma's inequality, where the martingale
in the standard proof can be viewed as a sampling algorithm.
Theorem \ref{thm:chatterjee} and Corollary \ref{cor:barbour} are instantiations of this idea when phrased in terms of markov chains
which exhibit metastable mixing.
Throughout this section, $(X_t)$ will denote a stationary Markov chain and $(X_t^x)$
will denote a Markov chain which is started at a particular state, i.e.\ $X_0^x = x$.

Finally, we remark that there is no single agreed-upon definition of ``metastable mixing'' in the literature, and in general
slightly different results will arise in different situations.
For instance, in the $p$-spin Curie--Weiss model, one can obtain standard mixing estimates for the dynamics conditioned to remain
in a metastable well \cite[Theorem 2.2]{samanta2024mixing}.
However, in the ERGM which is our main motivation, it is not possible to obtain standard mixing estimates as the dynamics
may get trapped behind bottlenecks inside of small regions \emph{within} the metastable well \cite[Theorem 3.3]{bresler2024metastable}.
We expect this phenomenon to be more widespread, and we adapt our results specifically to this possibility.

\subsection{A new result which works for local observables}
\label{sec:concentration_chatterjee}

Let us first present the result which will allow us to handle local observables,
which is a modification of the result of \cite{chatterjee2005concentration} displayed below.
Following \cite{chatterjee2005concentration}, we present this result and our modification
using the language of \emph{exchangeable pairs}, which will allow for a slightly cleaner proof.
Note however that the data of an exchangeable pair is equivalent to that of a stationary reversible Markov chain;
indeed, a pair of random variables $(X,X')$ is exchangeable if and only if it has the same distribution as $(X_0, X_1)$,
the first two steps of a stationary reversible Markov chain.
We will identify these objects and often refer to the Markov chain associated with an exchangeable pair, or vice versa.

\begin{theorem}[Theorem 3.3 of \cite{chatterjee2005concentration}]
\label{thm:chatterjee_old}
Let $(X,X')$ be an exchangeable pair of random variables in $\Omega$, and let $f : \Omega \to \R$ be any function.
Suppose there is an antisymmetric function $F : \Omega^2 \to \R$ such that $f(x) = \Econd{F(X,X')}{X=x}$, and
\begin{equation}
    \label{eq:chatterjee_condition}
    g(x) \coloneqq \fr{1}{2} \Econd{|f(X) - f(X')| |F(X,X')|}{X = x} \leq V
\end{equation}
for all $x \in \Omega$.
Then for all $\lambda \geq 0$ we have
\begin{equation}
    \P[ f(X) \geq \lambda ] \leq e^{- \lambda^2 / 2 V}.
\end{equation}
\end{theorem}

Let us comment on the role of the antisymmetric function $F$ in the above statement.
There might be multiple suitable ways to construct a function $F$ to use with Theorem \ref{thm:chatterjee_old},
and indeed a few options are exhibited in \cite{chatterjee2005concentration}, each one leading to slightly different conclusions.
However, given a function $f$, one can always make the following universal choice for $F$, whenever it is well-defined:
\begin{equation}
    \label{eq:chatterjee_F}
    F(x,x') \coloneqq \sum_{t=0}^\infty \left( \E[f(X_t^x)] - \E[f(X_t^{x'})] \right).
\end{equation}
Here, $(X_t^x)$ denotes the Markov chain associated with the exchangeable pair started at $X_0^x = x$
and similarly for $(X_t^{x'})$.
This can easily be seen to satisfy $\Econd{F(X,X')}{X=x} = f(x)$.
By the linearity of expectation, for any coupling between the two chains $(X_t^x)$ and $(X_t^{x'})$, we have
\begin{equation}
    F(x,x') = \sum_{t=0}^\infty \E\left[f(X_t^x) - f(X_t^{x'})\right].
\end{equation}
If there is rapid mixing, then we can find a coupling for which $f(X_t) - f(X_t')$ quickly becomes small, which allows
for the condition \eqref{eq:chatterjee_condition} to be verified.
This strategy was pursued in \cite{ganguly2024sub}, where concentration inequalities for arbitrary Lipschitz observables of
the subcritical ERGM were obtained.

In the supercritical case where we only have rapid metastable mixing and not rapid standard mixing, we will need to adjust the
hypotheses accordingly. 
In particular, we would like to only observe the chain while it is still rapidly mixing inside the metastable well,
before it has had a chance to leave (which would destroy any hope of a bound), and so we will
need to consider a different function $F$ than in \eqref{eq:chatterjee_F}.
In our applications, we will consider the function
\begin{equation}
\label{eq:Fdef}
    F(x,x') = \sum_{t = 0}^{T-1} \E \left[ f(X_t^x) - f(X_t^{x'}) \right],
\end{equation}
for $T$ chosen appropriately.
Now, we no longer have $\Econd{F(X,X')}{X = x} = f(x)$, but instead we have
\begin{align}
    \Econd{F(X,X')}{X = x} &= f(x) - P f(x) \\
    &\quad + P f(x) - P^2 f(x) \\
    &\quad + \dotsb \\
    &\quad + P^{T-1} f(x) - P^T f(x) \\
    &= f(x) - P^T f(x).
\end{align}
If the chain mixes rapidly within the metastable well starting from $x$, and $f$ has expectation zero within the well,
then $P^T f(x)$ will be small, allowing for $F$ to be used in a similar capacity as in Theorem \ref{thm:chatterjee}.
The following theorem encapsulates these changes.
As with Theorem \ref{thm:chatterjee_old}, it is more convenient to state the result in terms of the exchangeable pair $(X,X')$,
rather than the Markov chain directly.

\begin{theorem}
\label{thm:chatterjee}
Let $(X,X')$ be an exchangeable pair of random variables in $\Omega$, and let $f : \Omega \to \R$ be any function.
Suppose that we have subsets $\Lambda \sse \Lambda_+ \sse \Omega$ with $\Pcond{X' \in \Lambda_+}{X \in \Lambda} = 1$,
and an antisymmetric function $F : \Lambda_+^2 \to \R$ such that
the following conditions hold for some nonnegative numbers $\eps, M, \Delta, \delta, V$:
\begin{enumerate}[label=(A\arabic*)]
    \item
    \label{cond:gbound}
    A bound on a proxy for the variance of $f$ in $\Lambda$:
    \begin{equation}
        g(x) \coloneqq \fr{1}{2} \Econd{|f(X)-f(X')| |F(X,X')|}{X = x} \leq V \qquad \text{for all } x \in \Lambda.
    \end{equation}

    \item
    \label{cond:diffbound}
    The function $f$ assigns similar values to the exchangeable pair with one end in $\Lambda$:
    \begin{equation}
        \Pcond{|f(X) - f(X')| \leq \Delta}{X \in \Lambda} = 1.
    \end{equation}

    \item
    \label{cond:lambdalarge}
    The set $\Lambda$ occupies a large portion of the space:
    \begin{equation}
        \P[X \in \Lambda] \geq 1 - \eps.
    \end{equation}

    \item
    \label{cond:approxlift}
    The antisymmetric function $F$ is an approximate lift of $f$ in $\Lambda_+$:
    \begin{equation}
        \left| \Econd{F(X,X')}{X = x} - f(x) \right| \leq \delta \qquad \text{for all } x \in \Lambda_+.
    \end{equation}

    \item 
    \label{cond:aprioribound}
    The function $f$ is not too large within $\Lambda_+$:
    \begin{equation}
        \Pcond{|f(X)| \leq M}{X \in \Lambda_+} = 1.
    \end{equation}
\end{enumerate}
In addition, assume that $\eps \leq \fr{1}{2}$ and that $\delta \leq M$.
Then, for all $\lambda \in [0, \fr{V}{\Delta}]$, we have
\begin{equation}
    \P[f(X) > \lambda] \leq
    \Exp{- \fr{\lambda^2}{2V} + \fr{\Delta \lambda^3}{4 V^2}
    + \fr{\delta \lambda}{V} + 2 \eps e^{2 \lambda M / V}} + \eps.
\end{equation}
\end{theorem}

\begin{proof}[Proof of Theorem \ref{thm:chatterjee}]
Set $m(\theta) = \E[e^{\theta f(X)} | X \in \Lambda]$.
Then we have
\begin{align}
    m'(\theta) &= \Econd{f(X) e^{\theta f(X)}}{X \in \Lambda} \\
    &\leq \Econd{(F(X,X') + \delta) e^{\theta f(X)}}{X \in \Lambda}\\
    &= \delta \cdot m(\theta) \\
    \label{eq:mspf1}
    &\quad +\fr{1}{2} \Econd{F(X,X') \left(e^{\theta f(X)} - e^{\theta f(X')}\right)}{X \in \Lambda} \\
    \label{eq:mspf2}
    &\quad + \fr{1}{2} \Econd{F(X,X') \left( e^{\theta f(X)} + e^{\theta f(X')}\right)}{X \in \Lambda},
\end{align}
using condition \ref{cond:approxlift} in the second line.

Let us first control \eqref{eq:mspf2}.
We have
\begin{align}
    \eqref{eq:mspf2}
    \label{eq:mspf3}
    &= \fr{1}{2} \Econd{F(X,X') \left( e^{\theta f(X)} + e^{\theta f(X')} \right)}{X, X' \in \Lambda} \Pcond{X' \in \Lambda}{X \in \Lambda} \\
    \label{eq:mspf4}
    &\quad + \fr{1}{2} \Econd{F(X,X') \left( e^{\theta f(X)} + e^{\theta f(X')} \right)}{X \in \Lambda, X' \notin \Lambda} \Pcond{X' \notin \Lambda}{X \in \Lambda}.
\end{align}
Since the expression $F(X,X') \left( e^{\theta f(X)} + e^{\theta f(X')} \right)$ is antisymmetric in $X$ and $X'$,
and the condition $X,X' \in \Lambda$ is symmetric in $X$ and $X'$, \eqref{eq:mspf3} is zero.
As for \eqref{eq:mspf4}, first note that
\begin{align}
    \Pcond{X' \notin \Lambda}{X \in \Lambda} &= \fr{\P[X' \notin \Lambda] - \Pcond{X' \notin \Lambda}{X \notin \Lambda} \P[X \notin \Lambda]}{\P[X \in \Lambda]} \\
    &\leq \fr{\eps}{1 - \eps},
\end{align}
using the fact that $\P[X' \notin \Lambda] = \P[X \notin \Lambda]$, and both are at most $\eps$.
Additionally, by conditions \ref{cond:approxlift} and \ref{cond:aprioribound}, we have
\begin{equation}
    \Econd{F(X,X')}{X = x} \leq M + \delta
\end{equation}
for any $x \in \Lambda$.
So, using the tower property, we have
\begin{align}
    \Econd{F(X,X') e^{\theta f(X)}}{X \in \Lambda, X' \notin \Lambda} &=
    \Econd{\Econd{F(X,X')}{X} e^{\theta f(X)}}{X \in \Lambda, X' \notin \Lambda} \\
    &\leq (M+\delta) e^{\theta M},
\end{align}
using condition \ref{cond:aprioribound} again.
We get the same bound for the term involving $e^{\theta f(X')}$ using the tower property on $X'$ instead,
since we also have $\Econd{F(X,X')}{X' = x'} \leq M + \delta$ for any $x \in \Lambda_+$ by the same conditions
as before (plus the antisymmetry of $F$), and so we can conclude that
\begin{equation}
    \label{eq:mspf5}
    \eqref{eq:mspf2} = \eqref{eq:mspf4} \leq \fr{\eps}{1-\eps} (M+\delta) e^{\theta M}.
\end{equation}

Now we turn to \eqref{eq:mspf1}.
Since $|e^a - e^b| \leq \fr{|a-b|}{2}(e^a + e^b)$, as can be checked by expanding $e^a - e^b$ as an integral and applying 
Jensen's inequality pointwise, for $\theta \geq 0$ we have
\begin{align}
    \eqref{eq:mspf1} &\leq \fr{1}{4} \Econd{|F(X,X')| |\theta f(X) - \theta f(X')| (e^{\theta f(X)} + e^{\theta f(X')})}{X \in \Lambda} \\
    &\leq \fr{1+e^{\theta \Delta}}{4} \theta \cdot \Econd{|F(X,X')| |f(X) - f(X')| e^{\theta f(X)}}{X \in \Lambda} \tag*{(condition \ref{cond:diffbound})} \\
    &= \fr{1+e^{\theta \Delta}}{2} \theta \cdot \Econd{g(X) e^{\theta f(X)}}{X \in \Lambda} \tag*{(tower property)} \\
    &\leq \fr{1+e^{\theta \Delta}}{2} V \theta \cdot m(\theta). \tag*{(condition \ref{cond:gbound})}
\end{align}
This combined with \eqref{eq:mspf5} and the first display in the proof gives
\begin{equation}
    m'(\theta) \leq \delta \cdot m(\theta) +
        \fr{1 + e^{\theta \Delta}}{2} V \theta \cdot m(\theta)
        + \fr{\eps}{1-\eps} (M+\delta) e^{\theta M}.
\end{equation}
Now since $m(\theta) \geq e^{-\theta M}$ for $\theta \geq 0$ by condition \ref{cond:aprioribound},
for such $\theta$ the above inequality gives
\begin{equation}
    \fr{d}{d\theta} \log m(\theta) \leq \delta + \fr{1 + e^{\theta \Delta}}{2} V \theta
    + \fr{\eps}{1-\eps} (M+\delta) e^{2 \theta M}.
\end{equation}
Since $m(0) = 1$, integrating $\theta$ from $0$ to $\tau > 0$ we obtain
\begin{equation}
    \log m(\tau) \leq
    \delta \tau +
    \fr{V}{2} \left( \fr{\tau^2}{2} +
        \fr{e^{\tau \Delta} (\tau \Delta - 1) + 1}{\Delta^2} \right) +
    \fr{\eps}{1-\eps} (M+\delta) \fr{e^{2 \tau M} - 1}{2M}.
\end{equation}
Now note that for $a \in [0,1]$ we have
\begin{align}
    e^a (a-1) + 1 &= 1 + a + a^2 + \fr{a^3}{2} + \fr{a^4}{6} + \dotsb \\
    &\quad -1 - a - \fr{a^2}{2} - \fr{a^3}{6} - \fr{a^4}{24} - \dotsb \\
    &= \fr{a^2}{2} + \sum_{k=3}^\infty a^k \left( \fr{1}{(k-1)!} - \fr{1}{k!} \right) \\
    &\leq \fr{a^2}{2} + a^3 \sum_{k=3}^\infty \left(\fr{1}{(k-1)!} - \fr{1}{k!}\right) \\
    &= \fr{a^2}{2} + \fr{a^3}{2}.
\end{align}
So, for $0 \leq \tau \leq \fr{1}{\Delta}$, we have
\begin{align}
    \log m(\tau) &\leq \delta \tau +
    \fr{V}{2} \left( \fr{\tau^2}{2} + \fr{(\tau \Delta)^2
        + (\tau \Delta)^3}{2 \Delta^2} \right) +
    \fr{\eps}{1-\eps} \fr{M+\delta}{2M} (e^{2 \tau M} - 1) \\
    &\leq \delta \tau + \fr{V}{2} \tau^2 + \fr{V \Delta}{4} \tau^3
    + \fr{\eps}{1-\eps} \fr{M+\delta}{2M} (e^{2 \tau M} - 1).
\end{align}
So by Markov's inequality, taking $\tau = \fr{\lambda}{V}$, which is at most $\fr{1}{\Delta}$
since $\lambda \in [0,\fr{V}{\Delta}]$, we have
\begin{align}
    \Pcond{f(X) > \lambda}{X \in \Lambda}
    &\leq \Exp{ \log m\left(\fr{\lambda}{V}\right) - \fr{\lambda^2}{V}} \\
    &\leq \Exp{ - \fr{\lambda^2}{2 V} + \fr{\Delta \lambda^3}{4V^2} + \fr{\delta \lambda}{V}
    + \fr{\eps}{1 - \eps} \fr{M+\delta}{2M} (e^{2 \lambda M / V} - 1)}.
\end{align}
Finally, by using our assumptions that $\eps \leq \fr{1}{2}$
and $\delta \leq M$, we can simplify the last term inside the exponent.
Applying condition \ref{cond:lambdalarge} as well, we obtain
\begin{equation}
    \P[f(X) > \lambda] \leq
    \Exp{- \fr{\lambda^2}{2V} + \fr{\Delta \lambda^3}{4 V^2}
    + \fr{\delta \lambda}{V} + 2 \eps e^{2 \lambda M / V}} + \eps,
\end{equation}
which is the desired bound.
\end{proof}
\subsection{A result which works for global observables}
\label{sec:concentration_barbour}

Next let us introduce the result which will give us optimal rates of concentration for global observables.
It is derived from the following theorem due to \cite{barbour2022long}.
Note that we have adapted the notation to our present setting, and to be similar to the presentation of
Theorem \ref{thm:chatterjee} above, for more streamlined application in the next section.
Unlike Theorem \ref{thm:chatterjee_old}, this result is already suited for metastable mixing in its current form.

\begin{theorem}[Theorem 2.1 of \cite{barbour2022long}]
\label{thm:barbour_old}
Let $\Lambda \sse \Omega$ and $f : \Omega \to \R$ such that the following two conditions hold for some
positive numbers $V$ and $\Delta$, as well as some time $T \in \N$.
\begin{enumerate}[label=(B\arabic*)]
    \item
    \label{cond:variance}
    A bound on a proxy for the variance of $f$: for all $x \in \Lambda$,
    \begin{equation}
        h(x) \coloneqq \sum_{x' \in \Omega} P(x,x') \sum_{t=0}^{T-1} \left(
            \E[f(X_t^x)] - \E[f(X_t^{x'})] \right)^2 \leq V.
    \end{equation}
    \item
    \label{cond:smalljumps}
    A bound on the future expected difference, measured by $f$, between chains started at two adjacent initial states:
    for all $x \in \Lambda$, and $x' \in \Omega$ with $P(x,x') > 0$, and for all $t \geq 0$,
    \begin{equation}
        \left| \E[f(X_t^x)] - \E[f(X_t^{x'})] \right| \leq \Delta.
    \end{equation}
\end{enumerate}
Then, for any $x \in \Lambda$, we have
\begin{equation}
\label{eq:barbourconclusion}
    \P \left[
        X_t^x \in \Lambda \text{ for } 0 \leq t < T,
        \text{ and }
        |f(X_T^x) - \E[f(X_T^x)]| \geq \lambda
    \right] \leq 2 \Exp{- \fr{\lambda^2}{2 V + \fr{4}{3} \Delta \lambda}}.
\end{equation}
\end{theorem}

The main difference between Theorem \ref{thm:chatterjee} and \ref{thm:barbour_old} is captured by the difference between
the conditions \ref{cond:gbound} and \ref{cond:variance}.
Note that condition \ref{cond:gbound} with the choice \eqref{eq:Fdef} for the antisymmetric function $F$
reduces to the following condition for all $x \in \Lambda$:
\begin{equation}
\label{eq:bigbound}
    \sum_{x' \in \Omega} P(x,x') \left| f(x) - f(x') \right| \sum_{t=0}^{T-1} \left| \E[f(X_t^x)] - \E[f(X_t^{x'})] \right| \leq 2V.
\end{equation}
The factor of $| f(x) - f(x') |$ is ultimately what leads to the dependence on $\| v \|_1$ in
Theorem \ref{thm:lipschitzconcentration_informal}, while the expression involving expectations may in general only be bounded in terms of
$\| v \|_\infty$ and the overall mixing rate of the chain.
This difference is the reason that Theorem \ref{thm:chatterjee} leads to the fluctuation rate of $\sqrt{\| v \|_1 \| v \|_\infty}$
for local observables and why Theorem \ref{thm:barbour_old} leds to the fluctuation rate of $n \| v \|_\infty$ for global observables.

Theorem \ref{thm:barbour_old} provides concentration for the chain at some large enough time $T$, but we would like to
have concentration under the stationary distribution itself.
Moreover, in the conclusion there appears the condition that $X_t^x \in \Lambda$ for all
$0 \leq t < T$, which we would like to avoid carrying through the proofs that follow.
Both of these issues may be remedied if the chain mixes rapidly enough and if the chain
is very likely to remain in $\Lambda$ for $t < T$.
So we introduce some extra hypotheses and state a straightforward corollary which allows
for more seamless application of Theorem \ref{thm:barbour_old} in the next section.
In this corollary and below, we use $\dtv$ to denote the \emph{total variation distance} between
probability distributions or random variables.

\begin{corollary}
\label{cor:barbour}
Let $\mu$ denote the stationary measure of the dynamics $(X_t)$, and let $X \sim \mu$.
Suppose there is a set $\Lambda$ and a function $f$ such that the conditions \ref{cond:variance} and \ref{cond:smalljumps} in
the statement of Theorem \ref{thm:barbour_old} hold for positive numbers $V$ and $\Delta$, as well as time $T$.
Suppose also that the following three additional conditions hold for some nonnegative numbers
$\epsilon, \delta$, and $M$, and some $z \in \Lambda$.
\begin{enumerate}[label=(B\arabic*)]
    \setcounter{enumi}{2}
    \item
    \label{cond:setlarge}
    The chain has a high probability of staying in $\Lambda$ for time $T$ when started from $z$:
    \begin{equation}
        \P[X_t^z \in \Lambda \text{ for } 0 \leq t < T] \geq 1 - \epsilon.
    \end{equation}

    \item
    \label{cond:mixing}
    The chain mixes rapidly with respect to $f$ when started from $z$:
    \begin{equation}
        \dtv(f(X_T^z), f(X)) \leq \delta.
    \end{equation}

    \item
    \label{cond:fbound}
    The function $f$ is bounded close to its mean: for all $x \in \Omega$,
    \begin{equation}
        |f(x) - \E_\mu[f]| \leq M.
    \end{equation}
\end{enumerate}
Then for all $\lambda \geq 2 \delta M$ we have
\begin{equation}
\label{eq:barbourcor}
    \mu[|f(X) - \E[f(X)]| > \lambda] \leq 2 \exp \left(
        - \fr{(\lambda-2\delta M)^2}{2V + \fr{4}{3} \Delta (\lambda - 2\delta M)}
    \right) + \epsilon + \delta.
\end{equation}
\end{corollary}

\begin{proof}[Proof of Corollary \ref{cor:barbour}]
First of all, conditions \ref{cond:mixing} and \ref{cond:fbound} imply that
\begin{equation}
    \left| \E[f(X_T^z)] - \E[f(X)] \right| \leq 2 \delta M,
\end{equation}
so if we have $\left| f(X_T^z) - \E[f(X)] \right| > \lambda$ then we must also have
\begin{equation}
\label{eq:lambdaminus2deltam}
    \left|
        f(X_T^z) - \E[f(X_T^z)]
    \right| > \lambda - 2 \delta M.
\end{equation}
By conditions \ref{cond:setlarge} and \ref{cond:mixing}, we may couple $X$ and $(X_t^z)$ such that the event
where $f(X) \neq f(X_T^z)$ or $X_t^z \notin \Lambda$ for some $t$ with $0 \leq t < T$ happens with probability
at most $\epsilon + \delta$.
Thus \eqref{eq:barbourcor} follows from \eqref{eq:barbourconclusion} with $\lambda$ replaced by $\lambda - 2 \delta M$.
\end{proof}
\section{Proof of main theorem}
\label{sec:lipschitz}

In this section we use Theorem \ref{thm:chatterjee} and Corollary \ref{cor:barbour}
to prove our main theorem, providing a concentration inequality for Lipschitz
observables of the ERGM conditioned on a small cut distance ball around a constant graphon $W_{p^*}$ for some $p^* \in U_{\bm\beta}$.
To fix notation, for any $p \in [0,1]$ and $\eta > 0$ we set
\begin{equation}
    B_\eta^\square(p) \coloneqq \{ x \in \Omega : \db(W_x,W_p) < \eta \},
\end{equation}
where the relevant notions of graphons $W_x$ and $W_p$ and the cut distance $\db$ were defined in Section \ref{sec:review_ldp}.
The following result was previously stated informally as Theorem \ref{thm:lipschitzconcentration_informal} in the introduction.

\begin{theorem}
\label{thm:lipschitzconcentration}
For any $p^* \in U_{\bm\beta}$ there is some $\eta > 0$ as well as constants $C, c > 0$
such that the following holds.
Let $\mu$ denote the ERGM measure conditioned on $\allball$, and let $X \sim \mu$.
Let $f : \Omega \to \R$ be a $v$-Lipschitz observable. 
Then for all $\lambda \in [0, c \| v \|_1]$, we have
\begin{equation}
    \mu \left[ \left| f(X) - \E_\mu[f] \right| > \lambda \right]
    \leq 2 \Exp{- c \cdot \max \left\{ \fr{\lambda^2}{n^2 \| v \|_\infty^2},
    \fr{\lambda^2}{\| v \|_1 \| v \|_\infty} - \Exp{\fr{C \lambda}{\| v \|_\infty} - c n} \right\} } + e^{-c n}.
\end{equation}
\end{theorem}

Note that by shifting and scaling $f$ we may assume that $\E_\mu[f] = 0$ and that $\| v \|_\infty = 1$.
This also decreases $\| v \|_1$ by a factor of the original value of $\| v \|_\infty$.
Thus, scaling $\lambda$ by this same factor it suffices to prove that in the case where $\E_\mu[f] = 0$ and $\| v \|_\infty = 1$
we have
\begin{equation}
    \mu[|f(X)| > \lambda] \leq 2 \Exp{- c \cdot \max \left\{ \fr{\lambda^2}{n^2}, \fr{\lambda^2}{\| v \|_1} - e^{C \lambda - c n} \right\} }
    + e^{- cn}
\end{equation}
for all $\lambda \in [0, c \| v \|_1]$.
This in turn will be implied by the following two inequalities both holding for such $\lambda$:
\begin{align}
\label{eq:barbourbound}
    \mu[|f(X)| > \lambda] &\leq 2 \Exp{- c \frac{\lambda^2}{n^2}} + e^{-c n} \\
\label{eq:chatterjeebound}
    \mu[|f(X)| > \lambda] &\leq 2 \Exp{- c \frac{\lambda^2}{\| v \|_1} + e^{C \lambda - n}} + e^{- cn},
\end{align}
adjusting the constants as necessary.
We will apply Corollary \ref{cor:barbour} to prove \eqref{eq:barbourbound} and Theorem \ref{thm:chatterjee}
to prove \eqref{eq:chatterjeebound}.
Both applications will use similar inputs which we give an overview of in the next section before developing
them properly in subsequent sections.

\subsection{Overview of the proof}
\label{sec:lipschitz_overview}

For the rest of this section we will fix some $p^* \in U_{\bm\beta}$, and abbreviate $\allball$ as $\ball$
to avoid notational clutter.
We will also henceforth use $\mu$ to denote the ERGM measure conditioned on $\ball$, but note that this implicitly depends on
$\eta$ which we have not yet chosen.
Additionally, we will use $(X_t)$ to denote the stationary Glauber dynamics with respect to $\mu$, meaning that
there is a hard-wall constraint which prevents the chains from leaving $\ball$.
For technical reasons we also define this chain to simply stay in one place if it starts outside of $\ball$.
This behavior will not be important except for in making the proof of Lemma \ref{lem:initialgoodset} a bit simpler.
Additionally, we denote by $(X_t^x)$ the chain started at $X_0^x = x$.
Finally, we will use $P$ to denote the transition kernel of this Markov chain.

\subsubsection{Proving the bound for local observables}

Let us see how to use Theorem \ref{thm:chatterjee} to prove \eqref{eq:chatterjeebound}.
To use Theorem \ref{thm:chatterjee}, we need to define suitable subsets $\Lambda \sse \Lambda_+ \sse \ball$ and an
antisymmetric function $F : \Lambda_+^2 \to \R$, and verify conditions \ref{cond:gbound}-\ref{cond:aprioribound} of the theorem.
For $F$, as previously mentioned, we will take
\begin{equation}
    F(x,x') = \sum_{t = 0}^{T-1} \E \left[ f(X_t^x) - f(X_t^{x'}) \right],
\end{equation}
for a suitable choice of $T$, which will turn out to be $T = n^3$.
We delay a precise definition of the subset $\Lambda$ (which will be introduced in Section \ref{sec:lipschitz_goodset});
for now, it should be thought of as a connected
collection of configurations where all relevant local subgraph counts are close to those of $G(n,p^*)$, after appropriate normalization.
These local subgraph counts determine the dynamics, and allow for a comparison with a suitable Erd\H{o}s--R\'enyi graph,
which will yield dynamical results useful for the proof---we will expand on this in the following few subsections.
As for $\Lambda_+$, we will simply take the set of configurations reachable from $\Lambda$ in one step of the Glauber
dynamics, i.e.\ those $x \in \ball$ with $\dh(x,\Lambda) \leq 1$.
Here $\dh(x,y)$ denotes the Hamming distance, i.e.\ the number of
edge locations $e \in \binom{[n]}{2}$ on which the two configurations $x$ and $y$ disagree, and $\dh(x,\Lambda)$ denotes
the minimal distance from $x$ to $y$ for $y \in \Lambda$.
This trivially satisfies the condition $\Pcond{X' \in \Lambda_+}{X \in \Lambda} = 1$
when $(X,X')$ is an exchangeable pair corresponding to the stationary reversible Glauber dynamics chain with respect to $\mu$.

As it will turn out, our choice of $\Lambda$ will satisfy condition \ref{cond:lambdalarge} of Theorem \ref{thm:chatterjee}
with $\eps = e^{-\Omega(n)}$.
Specifically, there is some $\eta > 0$ for which we can find a set $\Lambda$ which satisfies certain nice properties to be
discussed later (see Proposition \ref{prop:goodset} for details), with
\begin{equation}
\label{eq:lambdalarge}
    \mu(\Lambda) \geq 1 - e^{-\Omega(n)}
\end{equation}
(recall that $\mu$ implicitly depends on $\eta$).
This will be proved in Section \ref{sec:lipschitz_goodset}.

Conditions \ref{cond:aprioribound} and \ref{cond:diffbound} of Theorem \ref{thm:chatterjee} are immediate from
the assumptions that $\E_\mu[f] = 0$ and $\| v \|_\infty = 1$.
Namely, we can take $\Delta = 1$ since $\| v \|_\infty = 1$ and the Glauber dynamics takes steps of size at most $1$
in Hamming distance.
Additionally, since $\E_\mu[f] = 0$ we can take $M = \| v \|_1$.
Note that both of these upper bounds hold uniformly in $\ball$; the restriction to $\Lambda$ or $\Lambda_+$ in
Theorem \ref{thm:chatterjee} is not necessary in this case.

Conditions \ref{cond:approxlift} and \ref{cond:gbound} are more involved.
For \ref{cond:approxlift}, as previously derived in Section \ref{sec:concentration_chatterjee}, we have
\begin{equation}
    \Econd{F(X,X')}{X = x} = f(x) - P^T f(x),
\end{equation}
and so we need to show that $P^T f(x)$ is small for a suitable choice of $T$, when $x \in \Lambda_+$.
Since $\E_\mu[f] = 0$, we have
\begin{equation}
    |P^T f(x)| \leq M \cdot \dtv (\delta_x P^T, \mu),
\end{equation}
where $\dtv$ denotes the total variation distance between probability distributions and $\delta_x$ is the Dirac
mass at $x$.
We would like to achieve $\delta = e^{-\Omega(n)}$ in condition \ref{cond:approxlift}, and since $M = \| v \|_1 \leq n^2$
by our assumption on $\| v \|_\infty$, it suffices to obtain the following bound, uniformly among $x \in \Lambda_+$:
\begin{equation}
\label{eq:dtvbound}
    \dtv(\delta_x P^T, \mu) \leq e^{-\Omega(n)}.
\end{equation}

Finally, as for condition \ref{cond:gbound}, note first that we have
\begin{align}
    g(x) &\leq \sum_{x'} P(x,x') |f(x) - f(x')| \sum_{t=0}^{T-1} \left| \E \left[ f(X_t^x) - f(X_t^{x'}) \right] \right| \\
    &\leq \sum_{x'} P(x,x') |f(x) - f(x')| \sum_{t=0}^{T-1} \E \left[ \dh(X_t^x, X_t^{x'}) \right],
\end{align}
since $\| v \|_\infty = 1$.
Now, suppose we had the following bound for all $x \in \Lambda$ and $x'$ such that $\dh(x,x') = 1$:
\begin{equation}
\label{eq:contraction}
    \E \left[ \dh(X_t^x, X_t^{x'}) \right] \lesssim \left(1 - \frac{\kappa}{n^2} \right)^t
\end{equation}
for some $\kappa > 0$ and all $t < T$.
Then we would obtain
\begin{equation}
\label{eq:dhbound}
    \sum_{t=0}^{T-1} \E \left[ \dh(X_t^x, X_t^{x'}) \right] \lesssim n^2,
\end{equation}
which yields
\begin{equation}
    g(x) \lesssim n^2 \sum_{x'} P(x,x') |f(x) - f(x')|.
\end{equation}
Now, since under the Glauber dynamics each of the $\binom{n}{2}$ edges have equal probability to be updated, 
and if an edge $e$ is updated then $|f(x) - f(x')| \leq v_e$, we obtain
\begin{equation}
    g(x) \lesssim n^2 \sum_e \fr{1}{\binom{n}{2}} v_e
    \lesssim \| v \|_1,
\end{equation}
meaning we can take $V = O(\| v \|_1)$.

By the above discussion, if there is some $T$ for which both \eqref{eq:dtvbound} and \eqref{eq:contraction} hold, then
we can apply Theorem \ref{thm:chatterjee} with $\eps = e^{-\Omega(n)}$, $M = \| v \|_1$, $\Delta = 1$,
$\delta = e^{-\Omega(n)}$, and $V = O(\| v \|_1)$, obtaining the following bound on the tail of $f$:
for all $\lambda \in [0, O(\| v \|_1)]$,
\begin{equation}
    \mu[ f(X) > \lambda ] \leq
    \Exp{- \fr{\lambda^2}{O(\| v \|_1)} + \fr{\lambda^3}{O(\| v\|_1^2)} + \fr{e^{-\Omega(n)} \lambda}{O(\| v \|_1)}
    + 2 e^{-\Omega(n)} e^{2 \lambda \| v \|_1 / O(\| v \|_1)}} + e^{-\Omega(n)}.
\end{equation}
Now, there is some small enough $c_1$ such that if $\lambda \leq c_1 \| v \|_1$, then the second term in the exponent
is at most half of the first term in magnitude,
so that the first two terms can be combined into one term of the form $- \fr{c_2 \lambda^2}{\| v \|_1}$.
Additionally, since $\lambda \lesssim n^2$ and $\| v \|_1 \geq \| v \|_\infty = 1$,
the third term is simply $\leq e^{-\Omega(n)}$.
Finally, the fourth term is $\leq e^{c_3 \lambda - \Omega(n)}$, which can absorb the third term as well.
Defining suitable constants $c_4$ and $c_5$ to replace the asymptotic notation, we thus obtain
\begin{equation}
    \mu[ f(X) > \lambda ] \leq
    \Exp{- \fr{c_2 \lambda^2}{\| v \|_1} + e^{c_3 \lambda - c_4 n}} + e^{-c_5 n},
\end{equation}
which, together with the corresponding fact for $-f$, is the bound \eqref{eq:chatterjeebound}.

So to finish the proof of \eqref{eq:chatterjeebound}, it remains only to construct $\Lambda$ and verify \eqref{eq:lambdalarge},
as well as to find $T$ for which both \eqref{eq:dtvbound} and \eqref{eq:contraction} hold; this will be achieved at $T = n^3$.

\subsubsection{Proving the bound for global observables}

As it turns out, once we construct $\Lambda$ which satisfies \eqref{eq:lambdalarge}, \eqref{eq:dtvbound}, and \eqref{eq:contraction}
for $T = n^3$ as in the proof of \eqref{eq:chatterjeebound},
these conditions will also imply \eqref{eq:barbourbound} through Corollary \ref{cor:barbour}, finishing the proof of Theorem \ref{thm:lipschitzconcentration}.
Let us now briefly see how this works by verifying the conditions \ref{cond:variance}-\ref{cond:fbound} of Corollary \ref{cor:barbour}
under the assumption of the existence of $\Lambda$ satisfying \eqref{eq:lambdalarge}, \eqref{eq:dtvbound}, and \eqref{eq:contraction}.

First, condition \ref{cond:fbound} follows immediately by definition with $M = \| v \|_1 \leq n^2$.

Next, for conditions \ref{cond:setlarge} and \ref{cond:mixing}, we will need to select a good starting point $z \in \Lambda$.
Since $T = n^3$ and $\mu(\Lambda) \geq 1 - e^{-\Omega(n)}$ by \eqref{eq:lambdalarge}, by a union bound we may select some $z \in \Lambda$
such that
\begin{equation}
    \P \left[ X_t^z \in \Lambda \text{ for } 0 \leq t < T \right] \geq 1 - e^{-\Omega(n)},
\end{equation}
validating condition \ref{cond:setlarge} with $\eps = e^{-\Omega(n)}$.
Further, since $z \in \Lambda$, \eqref{eq:dtvbound} shows that condition \ref{cond:mixing} holds with
$\delta = e^{-\Omega(n)}$ as well.

Now, for condition \ref{cond:smalljumps}, we have
\begin{equation}
    \left|
        \E[f(X_t^x)] - \E[f(X_t^{x'})]
    \right| \leq \E \left[ \dh(X_t^x, X_t^{x'}) \right],
\end{equation}
recalling that we have assumed $\| v \|_\infty = 1$.
Thus \eqref{eq:contraction} shows that we can take $\Delta = 1$ as long as $t < T$.
But for $t \geq T$ we also have that $\P[X_t^x \neq X_t^{x'}] \leq \P[X_T^x \neq X_T^{x'}] = e^{-\Omega(n)}$,
so the bound holds for all $t$ as required.

Finally, for condition \ref{cond:variance}, we have
\begin{align}
    h(x) &= \sum_{x'} P(x,x') \sum_{t=0}^{T-1} \left( \E[f(X_t^x)] - \E[f(X_t^{x'})] \right)^2 \\
    &\leq \sum_{x'} P(x,x') \sum_{t=0}^{T-1} \E \left[ \dh(X_t^x, X_t^{x'}) \right]^2 \\
    &\lesssim \sum_{t=0}^{n^3-1} \left( 1 - \frac{\kappa}{n} \right)^{2t} \\
    &\lesssim n^2,
\end{align}
using \eqref{eq:contraction} at the second to last step.
So we may take $V \lesssim n^2$.

The conclusion of Corollary \ref{cor:barbour} with $\eps = e^{-\Omega(n)}, \delta = e^{-\Omega(n)}, M \leq n^2, \Delta = 1$,
and $V \lesssim n^2$ is thus
\begin{align}
    \P \left[ |f(X)| > \lambda \right] \leq 
    2 \Exp{ - \frac{(\lambda - e^{-\Omega(n)})^2}{2 O(n^2) + \frac{4}{3} (\lambda - e^{-\Omega(n)})}} + e^{-\Omega(n)}
\end{align}
for all $\lambda \geq e^{-\Omega(n)}$.
When $\lambda \lesssim \| v \|_1 \lesssim n^2$, the second term in the denominator is dominated by the first.
Additionally, the condition that $\lambda \geq e^{-\Omega(n)}$ is of no issue as when $\lambda$ is smaller than that the bound
is trivially true upon adjusting the constants appropriately.
So we obtain \eqref{eq:barbourbound} for $\lambda \leq c \| v \|_1$.

This finishes the proof of Theorem \ref{thm:lipschitzconcentration_informal}, modulo the construction of the good starting
set $\Lambda$ which satisfies the conditions \eqref{eq:lambdalarge}, \eqref{eq:dtvbound}, and \eqref{eq:contraction}.
Constructing this set will occupy the remainder of this section, centered around Proposition \ref{prop:goodset}
which is one of the main technical and conceptual contributions of this article.
\subsection{Inputs from metastable mixing}
\label{sec:lipschitz_inputs}

In this subsection we introduce the main inputs from the works \cite{bhamidi2008mixing,bresler2024metastable} about the
mixing behavior of the ERGM Glauber dynamics, which we will use in the construction of the good set $\Lambda$ and the
verification of \eqref{eq:lambdalarge}, \eqref{eq:dtvbound}, and \eqref{eq:contraction}.
Here we use $(\tilde{X}_t)$ to denote the \emph{unconditioned} stationary Glauber dynamics under the full ERGM measure $\tilde{\mu}$,
i.e.\ here there is no hard-wall constraint forcing the dynamics to stay in $\ball$.
Also, we use $(\tilde{X}_t^x)$ to denote this chain started at $\tilde{X}_0^x = x$.
We also still use $\mu$ to denote the ERGM measure conditioned on $\ball$ for some choice of $\eta > 0$.

At a high level, both the rapid mixing of the ERGM at high temperature and the metastable mixing at low temperature
follow from the existence of large attracting sets within which contraction properties arise under the monotone coupling.
To discuss these sets, we need notation, which we will pull from \cite{bhamidi2008mixing,bresler2024metastable},
choosing one convention when those works disagree.
For any fixed graph $G = (V,E)$, we remind the reader that $N_G(x,e)$ denotes the number of labeled subgraphs of $x^{+e}$
which are isomorphic to $G$ and use the edge $e$, or equivalently
\begin{equation}
    N_G(x,e) = N_G(x^{+e}) - N_G(x^{-e}),
\end{equation}
where $N_G(X)$ denotes the homomorphism count of $G$ in $X$ as defined in Section \ref{sec:review}.
Now define
\begin{equation}
    r_G(x,e) \coloneqq \left( \fr{N_G(x,e)}{2 |E| n^{|V|-2}} \right)^{\fr{1}{|E|-1}}.
\end{equation}
To motivate this definition, notice that if $X \sim \cG(n,p)$, then $r_G(X,e) \approx p$ with high probability.
Additionally, inserting the above definition into the discussion of Section \ref{sec:review_dynamics},
the update probabilities of the \emph{unconditioned} ERGM Glauber dynamics can be expressed as
\begin{equation}
    \P\left[\tilde{X}_1^x(e) = 1 \right] = \phi\left(\sum_{i=0}^K 2 \beta_i |E_i| r_{G_i}(x, e)^{|E_i|-1}\right),
\end{equation}
where $\phi(z) = \fr{e^z}{1 + e^z}$.
So, conversely, if $x$ were such that all $r_G(x,e)$ were close to $p$ (for relevant $G$ and all $e$), then
the update probability under the unconditioned dynamics would be close to
\begin{equation}
    \phi_{\bm\beta}(p) = \phi\left(\sum_{i=0}^K 2 \beta_i |E_i| p^{|E_i|-1}\right).
\end{equation}
Therefore if $p^*$ is a fixed point of $\phi_{\bm\beta}$, and if we start with a configuration $x$ with all relevant
$r_G(x,e)$ close to $p^*$, then $\tilde{X}_1^x$ will retain this property.
Moreover, the monotone coupling will turn out to be contractive among graphs with this property.

To make use of this, we define
\begin{equation}
    \Gamma_{p,\eps} \coloneqq \left\{ x \in \Omega : |r_G(x,e) - p| \leq \eps \text{ for all } e \in \binom{[n]}{2}
    \text{ and all } G \in \G_L \right\},
\end{equation}
where $L$ is some constant with $L > \max_{i \leq K} |V_i|$, and $\G_L$ is the collection of all finite graphs
on at most $L$ vertices.
As usual, we will fix some $p^* \in U_{\bm\beta}$ and we will abbreviate $\Gamma_{p^*,\eps}$ as $\Gamma_\eps$
to avoid notational clutter.
The following two lemmas make the previously-mentioned intuition about the Glauber dynamics precise, showing that
$\Gamma_\eps$ is relatively stable under the Glauber dynamics, and that within this set the monotone coupling is contractive.
Note that both of the following lemmas actually hold for any $p^*$ which is a local maximum of $L_{\bm\beta}$, not just those in $U_{\bm\beta}$
(which are all \emph{global} maxima), but later results such as Theorem \ref{thm:goodsetlarge} below only hold for $p^* \in U_{\bm\beta}$.

\begin{lemma}[Lemma 17 of \cite{bhamidi2008mixing}: staying in $\Gamma_{\eps}$ when started in $\Gamma_{\eps/2}$]
\label{lem:staygood}
For all small enough $\eps > 0$, there exists $\alpha > 0$ such that whenever $x \in \Gamma_{\eps/2}$, we have
\begin{equation}
    \P\left[\tilde{X}_t^x \in \Gamma_{\eps} \text{ for all } t < e^{\alpha n}\right]
    \geq 1 - e^{-\alpha n}.
\end{equation}
\end{lemma}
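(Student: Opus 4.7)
The plan is to track each local density $r_G(X_t,e)$, for $G \in \G_L$ and $e \in \binom{[n]}{2}$, as a bounded-increment real-valued process with a mean-reverting drift toward $p^*$, to show individually that each such process stays inside $[p^*-\eps, p^*+\eps]$ for all $t < e^{\alpha n}$ with probability $\geq 1 - e^{-\beta n}$ for suitable $\beta > \alpha > 0$, and to finish with a union bound over the $\binom{n}{2}$ edges and the $O(1)$ graphs in $\G_L$. As a preliminary I would verify the deterministic single-step estimate $|r_G(X_{t+1},e) - r_G(X_t,e)| = O(1/n)$: a Glauber update at edge $e'$ affects $N_G(X,e)$ only through labeled copies of $G$ containing both $e$ and $e'$, of which there are at most $O(n^{|V_G|-3})$, against the natural normalization $n^{|V_G|-2}$. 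On $\Gamma_\eps$ we have $r_G \geq p^* - \eps$ bounded away from zero, so an application of the mean value theorem to $r \mapsto r^{|E_G|-1}$ transfers this to $r_G$ itself.

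Next I would establish the drift estimate. Whenever $X_t \in \Gamma_\eps$, every Glauber update probability equals $\phi_{\bm\beta}(p^*) + O(\eps) = p^* + O(\eps)$, by the fixed-point identity and smoothness of $\phi_{\bm\beta}$. A careful first-moment calculation, averaging over the choice of updated edge $e'$, should then yield
\begin{equation}
\Econd{r_G(X_{t+1},e) - r_G(X_t,e)}{X_t} = -\fr{\kappa}{n^2}\bigl(r_G(X_t,e) - p^*\bigr) + O\!\left(\fr{\eps}{n^2}\right)
\end{equation}
for some constant $\kappa > 0$ depending on $\phi_{\bm\beta}'(p^*) < 1$, which encodes the attracting fixed-point condition (equivalently $L_{\bm\beta}''(p^*) < 0$). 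Moreover, only an $O(1/n)$ fraction of updates can affect $r_G(\cdot,e)$ at all, so the per-step conditional variance is $O(1/n^3)$. The centered process $r_G(X_t,e) - p^*$ then compares to an AR(1) sequence with contraction rate $1 - \kappa/n^2$ and innovation variance $O(1/n^3)$, whose stationary standard deviation is $O(n^{-1/2})$. A Freedman-type martingale inequality, applied up to time $e^{\alpha n}$, shows the process exits $[p^*-\eps, p^*+\eps]$ with probability at most $e^{\alpha n} \cdot e^{-c\eps^2 n} \leq e^{-\alpha n}$ for $\alpha = \alpha(\eps, \kappa)$ sufficiently small. The buffer between $\Gamma_{\eps/2}$ and $\Gamma_\eps$ in the hypothesis ensures that the initial burn-in, before the AR(1) stationarity sets in, does not cause an early exit.

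The main obstacle is the drift calculation itself: making precise the claim that the expected one-step change of $r_G(X,e)$ in $\Gamma_\eps$ points back toward $p^*$ linearly in the current discrepancy. This requires unfolding the $(|E_G|-1)$-th root in the definition of $r_G$, separating the contributions from overlaps of the updated edge $e'$ with copies of $G$ touching $e$ according to their overlap pattern, and then appealing crucially to $\phi_{\bm\beta}'(p^*) < 1$. Fortunately, this is precisely the content of the cited Lemma 17 of \cite{bhamidi2008mixing}, whose argument should transfer directly to the present setting.
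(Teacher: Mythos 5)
The lemma you are proving is cited from the literature (Lemma 17 of \cite{bhamidi2008mixing}, as restated in \cite[Lemma 4.8]{bresler2024metastable}); the paper supplies no proof, only the one-line remark that the original argument compares each $r_G(X,e)$ to a random walk with drift toward $p^*$. Your sketch matches that high-level description, so the strategy is the intended one. That said, two points in the sketch would not survive a careful write-up.

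First, the displayed drift estimate is not actually a useful one. You write
\begin{equation}
    \Econd{r_G(X_{t+1},e) - r_G(X_t,e)}{X_t}
    = -\fr{\kappa}{n^2}\bigl(r_G(X_t,e) - p^*\bigr) + O\!\left(\fr{\eps}{n^2}\right),
\end{equation}
but on $\Gamma_\eps$ the restoring term itself has magnitude at most $\kappa\eps/n^2$, the same order as your error term. As stated, the error can swamp the restoring force and the process need not be mean-reverting at all. What one needs is an error of strictly higher order in the deviation, e.g.\ $O(\eps^2/n^2)$ coming from linearizing $\phi_{\bm\beta}$ around $p^*$ with $\phi_{\bm\beta}'(p^*) < 1$ (the analogue of the Curie--Weiss linearization $\tanh(h+\beta m) - m \approx -(1-\tanh')(m - m^*) + O((m-m^*)^2)$). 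That quadratic remainder, plus taking $\eps$ small, is what ensures that the drift on the annulus $\eps/2 \leq |r_G - p^*| \leq \eps$ actually points inward.

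Second, treating each process $r_G(\cdot,e)$ in isolation is not quite legitimate, because the one-step update probability at any edge depends on the \emph{entire} vector $(r_{G'}(X_t,e'))_{G' \in \G_L,\, e'}$ through $\phi_{\bm\beta}$. A coordinate whose current value happens to equal $p^*$ exactly can still have nonzero drift if some \emph{other} coordinate is off. The actual argument has to control the coupled system, for instance by tracking $\sup_{G,e} (r_G(X_t,e) - p^*)$ and $\inf_{G,e}(r_G(X_t,e) - p^*)$ and showing each has a supermartingale/submartingale structure on the relevant annulus, before applying an Azuma-- or Freedman-type bound and a union bound over $\binom{n}{2}$ edges and the finitely many $G \in \G_L$. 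The bounded-increment $O(1/n)$ estimate you give, the exponentially-long-time union bound, and the comment that the $\Gamma_{\eps/2} \subset \Gamma_\eps$ buffer absorbs the burn-in are all fine and part of the standard argument.
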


We remark that in the original statement of this lemma in \cite{bhamidi2008mixing}, it is not explicitly mentioned
that the error term on the right-hand side can be taken as uniform over $x \in \Gamma_{\eps/2}$ (i.e.\ written as $e^{-\alpha n}$
for some $\alpha$ not depending on $x$), however this is clear from the proof and was rectified in the restatement of the lemma
as \cite[Lemma 4.8]{bresler2024metastable}.

\begin{lemma}[Lemma 18 of \cite{bhamidi2008mixing}: one-step contraction within $\Gamma_{\eps}$]
\label{lem:contractgood}
For all small enough $\eps > 0$, there exists $\kappa > 0$ such that for all $x,x' \in \Gamma_{\eps}$ with
$\dh(x,x') = 1$, we have
\begin{equation}
    \E\left[\dh(\tilde{X}_1^x, \tilde{X}_1^{x'})\right] \leq \left(1 - \fr{\kappa}{n^2}\right),
\end{equation}
where $\tilde{X}_1^x, \tilde{X}_1^{x'}$ are coupled under the monotone coupling.
\end{lemma}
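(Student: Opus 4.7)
The plan is a single-step path coupling argument under the monotone coupling. Without loss of generality take $X_0 \leq X_0'$ componentwise, so they differ only at the edge $e^*$ with $X_0(e^*) = 0$ and $X_0'(e^*) = 1$. Write $\Psi(X,e) = \sum_{i=0}^K \beta_i N_{G_i}(X,e)/n^{|V_i|-2}$, so that the Glauber update probability at $e$ starting from $X$ equals $\phi(\Psi(X,e))$. Conditioning on which edge $e$ is selected for resampling: if $e = e^*$ (probability $1/\binom{n}{2}$), then $\Psi(X_0,e^*) = \Psi(X_0',e^*)$ since neither depends on the value at $e^*$, so under the monotone coupling $X_1(e^*) = X_1'(e^*)$ and the single discrepancy is destroyed. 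If $e \neq e^*$, the monotone coupling creates a new discrepancy at $e$ with probability exactly $p_e := \phi(\Psi(X_0',e)) - \phi(\Psi(X_0,e)) \geq 0$. Hence
\begin{equation}
    \E[\dh(X_1,X_1')] = 1 - \fr{1}{\binom{n}{2}}\left(1 - \sum_{e \neq e^*} p_e\right),
\end{equation}
and the task reduces to showing $\sum_{e \neq e^*} p_e \leq 1 - \kappa'$ for some $\kappa' > 0$.

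For this key estimate, I would first linearize via the $\Gamma_\eps$ control: since $\Psi(X,e) = \Psi_{\bm\beta}(p^*) + O(\eps)$ on $\Gamma_\eps$, the mean value theorem gives $p_e \leq (p^*(1-p^*) + O(\eps))[\Psi(X_0',e) - \Psi(X_0,e)]$. For each $e \neq e^*$, the difference $N_{G_i}(X_0',e) - N_{G_i}(X_0,e)$ counts labeled copies $C$ of $G_i$ in $(X_0')^{+e}$ using both $e$ and $e^*$. Summing over $e \neq e^*$ and classifying embeddings by their pattern of edges present in $X_0'$, I would establish
\begin{equation}
    \sum_{e \neq e^*} \left[N_{G_i}(X_0',e) - N_{G_i}(X_0,e)\right] = 2|E_i|(|E_i|-1)(p^*)^{|E_i|-2} n^{|V_i|-2} + O(\eps \cdot n^{|V_i|-2}),
\end{equation}
where the error is controlled by the $\Gamma_\eps$ bounds on the subgraphs of $G_i$ obtained by removing one edge, each of which lies in $\G_L$ by the definition of $\Gamma_\eps$.

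Summing over $i$ then gives $\sum_{e \neq e^*}[\Psi(X_0',e) - \Psi(X_0,e)] = \Psi_{\bm\beta}'(p^*) + O(\eps)$, and combining with the $\phi$ linearization yields $\sum_{e \neq e^*} p_e = p^*(1-p^*) \Psi_{\bm\beta}'(p^*) + O(\eps) = \phi_{\bm\beta}'(p^*) + O(\eps)$. As noted in Section \ref{sec:review_ergm}, the hypothesis $p^* \in U_{\bm\beta}$, i.e.\ $L_{\bm\beta}''(p^*) < 0$, is equivalent to $\phi_{\bm\beta}'(p^*) < 1$, which guarantees a strictly positive gap. Taking $\eps$ small enough to absorb the $O(\eps)$ error yields $\sum_{e \neq e^*} p_e \leq 1 - \kappa'$, and therefore $\E[\dh(X_1,X_1')] \leq 1 - \kappa/n^2$ for a suitable $\kappa > 0$.

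The main obstacle is the combinatorial identity in the second paragraph. The naive contribution from complete copies of $G_i$ using $e^*$ yields only $(|E_i|-1) N_{G_i}(X_0,e^*) \approx 2|E_i|(|E_i|-1)(p^*)^{|E_i|-1} n^{|V_i|-2}$, which is short by a factor of $1/p^*$; the missing mass comes from near-copies of $G_i$ using $e^*$ with exactly one \emph{other} edge missing in $X_0'$. Combining the two contributions and extracting the sharp leading order requires reorganizing the double sum over $(e,C)$ as a sum over ordered pairs of distinct edges of $G_i$, and recognizing each resulting inner count as the count of labeled copies of $G_i \setminus \{f\}$ using $e^*$, which is controlled by $r_H(X_0',e^*) = p^* + O(\eps)$ for $H = G_i \setminus \{f\} \in \G_L$.
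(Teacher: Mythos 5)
The paper itself gives no proof of this lemma; it is cited directly from \cite{bhamidi2008mixing} (Lemma 18). Your proposal correctly reconstructs the argument from that reference --- the single-step coalescence versus new-discrepancy accounting under the monotone coupling, the linearization of $\phi$ on $\Gamma_\eps$ with derivative $p^*(1-p^*)$ at the fixed point, the reorganization of the double sum over $(e,\sigma)$ into a sum over ordered pairs of distinct edges of $G_i$ whose inner count is $N_{G_i\setminus f}(X_0',e^*)$ and is controlled by the $\Gamma_\eps$ bound on $r_{G_i\setminus f}$, and the identification of $\sum_{e\neq e^*}p_e$ with $\phi_{\bm\beta}'(p^*)+O(\eps) < 1$ via the $L_{\bm\beta}''(p^*)<0$ hypothesis.
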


We remark that a simple path coupling argument allows the above contraction to be extended to all pairs of initial
configurations $x, x' \in \Gamma_\eps$ with $x \preceq x'$, since $\Gamma_\eps$ is an interval in the Hamming order.
However, this \emph{does not} imply that the contraction holds for arbitrary $x, x' \in \Gamma_\eps$, as it is not
a priori clear that the set $\Gamma_\eps$ is connected, with respect to the nearest-neighbor structure of the Hamming cube.
This will be remedied in the next section, where we extract a large connected subset of $\Gamma_\eps$.

The stability statement, Lemma \ref{lem:staygood}, was proved by comparing each $r_G(\tilde{X}_t^x,e)$ to a random walk, which has some
drift towards $p^*$ from all all starting ponits in the high-temperature regime.
On the other hand, in low temperature, there are multiple fixed points $p^* = \phi_{\bm\beta}(p^*)$, and the corresponding random
walks only have drift towards a fixed point if they start within a certain well of attraction.
The main contribution of \cite{bresler2024metastable} is to show that nevertheless, the set $\Gamma_{p^*,\eps}$ is large when conditioned
on a cut distance ball around $W_{p^*}$, for any relevant fixed point $p^*$.

\begin{theorem}[Theorem 4.5 of \cite{bresler2024metastable}: $\Gamma_\eps$ is large]
\label{thm:goodsetlarge}
For any $\eps > 0$, there is some $\eta > 0$ such that
\begin{equation}
    \mu[\Gamma_\eps] \geq 1 - e^{-\Omega(n)},
\end{equation}
where $\mu$ denotes the ERGM measure conditioned on $\ball$.
\end{theorem}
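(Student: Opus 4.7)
The plan is to reduce to a single-edge, single-graph event by a union bound. Since there are $\binom{n}{2} = O(n^2)$ choices of $e$ and $|\G_L| = O(1)$ finite graphs on at most $L$ vertices, it suffices to show that for each fixed $e \in \binom{[n]}{2}$ and $G \in \G_L$,
\[
\mu\bigl[|r_G(X,e) - p^*| > \eps\bigr] \leq e^{-\Omega(n)},
\]
with a rate uniform in $e$ and $G$; the factor of $n^2$ from the union bound is then absorbed into the exponential rate. Throughout I would choose $\eta$ small enough that the cut-distance constraint forces approximate global density $p^*$ and that a fixed-point analysis stays within a small basin around $p^*$ where the strict concavity $L_{\bm\beta}''(p^*) < 0$ applies.

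I would next argue that the unconditioned ERGM assigns $\Omega(1)$ mass to $\ball$, reducing the question to bounds for the unconditional measure intersected with $\ball$. The upper bound on complementary regions comes from the LDP (Theorem \ref{thm:LDP}); a matching lower bound on $\ball$ follows by evaluating the partition function against the entropy of $\cG(n,p^*)$, with $L_{\bm\beta}''(p^*) < 0$ guaranteeing a nontrivial contribution. With this reduction in hand, I would unpack $N_G(X,e)$ as a sum of $\Theta(n^{|V|-2})$ indicator products over embeddings of $G$ sending a distinguished edge of $G$ to $e$. Under $\cG(n,p^*)$ these indicators are weakly correlated and standard concentration (Chernoff in the star case, Janson-type or martingale-based bounds more generally) yields exponential tails for $|r_G(X,e) - p^*| > \eps$.

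The main obstacle is transferring this concentration from $\cG(n,p^*)$ to the ERGM on $\ball$, since the Radon-Nikodym derivative with respect to $\cG(n,p^*)$ varies across $\ball$ too much to be controlled uniformly. This is where the cavity method indicated in the excerpt enters, and I would pursue the following bootstrap. Let $\cB(X) = \{e : |r_G(X,e) - p^*| > \eps \text{ for some } G \in \G_L\}$. The conditional law $\Pcond{X(e) = 1}{X_{-e}} = \phi\bigl(\sum_i 2\beta_i |E_i| r_{G_i}(X,e)^{|E_i|-1}\bigr)$ must average on $\ball$ to a value close to $p^*$ by the cut-distance constraint; a fixed-point analysis near $p^*$, using $L_{\bm\beta}''(p^*) < 0$ to exclude drift toward other solutions of $p = \phi_{\bm\beta}(p)$, then forces $\E[|\cB(X)|] \leq n^2 \cdot e^{-\Omega(n)}$. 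By the exchangeability of edges under the ERGM, $\P[e \in \cB(X)]$ is the same for every $e$, and equals $\E[|\cB(X)|]/\binom{n}{2}$, yielding the required per-edge bound. The delicate point of this final step is closing the bootstrap loop: the fixed-point analysis needs an a priori bound on how far most $r_{G_i}(X,e)$ are from $p^*$, which must itself be extracted from the cut-distance constraint using a weaker form of concentration, and then iteratively sharpened.
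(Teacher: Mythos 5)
This statement is not proved in the paper at all: it is imported verbatim as Theorem 4.5 of Bresler, Nagaraj, and Nichani \cite{bresler2024metastable}, and the present paper treats it as a black box. There is therefore no ``paper's own proof'' to compare against. The paper's only commentary on the proof is that it proceeds via the cavity method.

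That said, your sketch has a genuine gap at the step you yourself flag as ``delicate.'' The reduction to a per-edge bound by union bound is fine, and the observation that $\ball$ carries $\Omega(1)$ mass under the full ERGM (since $p^* \in U_{\bm\beta}$ is a global maximizer) is also fine. But your step 5 --- transferring concentration of $r_G(X,e)$ from $\cG(n,p^*)$ to the conditioned ERGM --- is not just delicate, it is circular as written. You propose that the conditional probabilities $\phi(\partial_e H(X))$ ``must average on $\ball$ to a value close to $p^*$ by the cut-distance constraint,'' but the cut-distance constraint only controls graphon-level subgraph densities, i.e.\ averages of $N_G(X)/n^{|V|}$; it does not directly control the local ratios $r_G(X,e)$ for individual edges $e$, which is exactly what $\Gamma_\eps$ is about, nor does it control conditional laws. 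Control of local statistics given only global cut-distance information is precisely the nontrivial content of the theorem. You then propose to extract an exponential tail $\E[|\cB(X)|] \leq n^2 e^{-\Omega(n)}$ from a ``fixed-point analysis,'' but fixed-point/contraction arguments of the type used with $\phi_{\bm\beta}$ are estimates on expectations or drifts, not large-deviation bounds; the exponential decay rate needs a concentration input that your sketch never produces. The bootstrap you gesture at (``weaker form of concentration... iteratively sharpened'') is where the cavity method actually lives, and without specifying what that seed estimate is and why it can be iterated, the proof does not close.

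A cosmetic but instructive point: exchangeability of edges under the ERGM gives $\P[e \in \cB(X)] = \E[|\cB(X)|]/\binom{n}{2}$ only under the full measure, not under $\mu$ conditioned on $\ball$, since the ball $\ball$ is itself edge-exchangeable so this particular step survives --- but it would be worth stating that explicitly, since similar symmetry steps do break after conditioning on non-exchangeable events.
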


We remark that the above does not mean $\Gamma_\eps$ is contained in $\ball$, and the truth of this statement is
not clear since the definition of $\Gamma_\eps$ involves the local edge counts of only finitely many graphs.
This issue will be resolved by simply considering the intersection of $\Gamma_\eps$ with an appropriate cut distance ball
in the sequel.
As an easy consequence of the above theorem, we also find that smaller sets $\Gamma_\delta$ are also large,
for any $\delta \in (0,\eps)$.
The following corollary follows by an application of the large deviations principle, Theorem \ref{thm:LDP}, which allows us to
expand the cut distance ball exhibited in Theorem \ref{thm:goodsetlarge} for $\delta > 0$ to the potentially larger ball
exhibited for $\eps > \delta$.

\begin{corollary}
\label{cor:goodsetlarge}
Let $\eps, \eta > 0$ be as in Theorem \ref{thm:goodsetlarge}.
Then, for any $\delta \in (0,\eps)$, we have
\begin{equation}
    \mu[\Gamma_\delta] \geq 1 - e^{-\Omega_\delta(n)},
\end{equation}
where $\mu$ denotes the ERGM measure conditioned on $\ball$.
\end{corollary}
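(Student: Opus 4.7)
The plan is to reduce to Theorem \ref{thm:goodsetlarge} applied at the smaller level $\delta$, and then use the large deviations principle (Theorem \ref{thm:LDP}) to transfer from the smaller conditioning ball to the larger one. Specifically, apply Theorem \ref{thm:goodsetlarge} with $\delta$ in place of $\eps$ to produce some $\eta_\delta > 0$ such that $\mu_{\eta_\delta}[\Gamma_\delta] \geq 1 - e^{-\Omega_\delta(n)}$, where $\mu_{\eta_\delta}$ denotes the ERGM measure conditioned on $B_{\eta_\delta}^\square(p^*)$. Without loss of generality we may take $\eta_\delta \leq \eta$, and (shrinking $\eta$ a bit if necessary, which only strengthens the original theorem) we may also assume $\eta$ is small enough that $\allball$ contains no other global maximizer $q^* \in M_{\bm\beta}$.

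The key inequality is obtained by letting $\nu$ denote the unconditioned ERGM measure and writing
\begin{align}
    \mu_\eta[\Gamma_\delta^c]
    &= \fr{\nu[\Gamma_\delta^c \cap \allball]}{\nu[\allball]} \\
    &\leq \fr{\nu[\Gamma_\delta^c \cap B_{\eta_\delta}^\square(p^*)]}{\nu[\allball]} + \fr{\nu[\allball \setminus B_{\eta_\delta}^\square(p^*)]}{\nu[\allball]}.
\end{align}
The first summand is at most $\mu_{\eta_\delta}[\Gamma_\delta^c] \leq e^{-\Omega_\delta(n)}$, using $\nu[B_{\eta_\delta}^\square(p^*)] \leq \nu[\allball]$ in the numerator. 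For the second summand, any $X$ contributing to the numerator satisfies $\db(W_X, W_{p^*}) \geq \eta_\delta$; by our choice of $\eta$, such an $X$ lies at cut distance at least $\eta_\delta/2$ from every element of $M_{\bm\beta}$, so Theorem \ref{thm:LDP} bounds the numerator by $e^{-\Omega(n^2)}$. Meanwhile the denominator $\nu[\allball]$ is bounded below by an inverse polynomial in $n$ (in fact, essentially by $1/|M_{\bm\beta}|$), since $p^*$ is a global maximizer and the LDP forces essentially all the $\nu$-mass into the union of small balls around maximizers. Thus the second summand is $e^{-\Omega(n^2)}$, and combining yields $\mu_\eta[\Gamma_\delta^c] \leq e^{-\Omega_\delta(n)}$ as desired.

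The only mildly delicate step is the lower bound on the denominator $\nu[\allball]$. In the phase coexistence regime one must rule out the possibility that $p^*$ receives exponentially less mass than some other global maximizer. This is a standard fact for Gibbs measures concentrated on global maxima of a smooth functional---the leading exponential rates agree by definition of $M_{\bm\beta}$, and any polynomial discrepancy between phases is comfortably absorbed into the $e^{-\Omega(n^2)}$ gap coming from the LDP. No deeper input is needed because we only require that the denominator dominate $e^{-\Omega(n^2)}$, not that it be of constant order.
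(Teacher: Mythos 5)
Your overall strategy is the intended one: apply Theorem \ref{thm:goodsetlarge} at level $\delta$ to get a (possibly smaller) radius $\eta_\delta$, split the conditioning ball $\allball$ into $B_{\eta_\delta}^\square(p^*)$ and its complement, and control the second piece with the large deviations principle. The decomposition $\mu_\eta[\Gamma_\delta^c] \leq \mu_{\eta_\delta}[\Gamma_\delta^c] + \nu[\allball \setminus B_{\eta_\delta}^\square(p^*)]/\nu[\allball]$ is exactly right, and the bound $\nu[\allball \setminus B_{\eta_\delta}^\square(p^*)] \leq e^{-\Omega_\delta(n^2)}$ via Theorem \ref{thm:LDP} (once $\eta$ is small enough to isolate $p^*$ from the rest of $M_{\bm\beta}$) is correct.

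The place you gloss over is the lower bound on the denominator $\nu[\allball]$, and what you write there is stronger than what is available. Theorem \ref{thm:LDP}, as stated in the paper, is only the \emph{upper} bound of the LDP: it says $\nu$ concentrates on the union of small balls around all global maximizers, but it says nothing about how that mass is shared among the maximizers when $|M_{\bm\beta}| > 1$. In the phase coexistence regime, nothing stated here rules out $\nu[\allball]$ being as small as, say, $e^{-\Theta(n)}$, so the claim that it is bounded below by $1/|M_{\bm\beta}|$ or by an inverse polynomial in $n$ is not justified. What \emph{is} true (and, as you correctly observe, is all that is needed to beat the $e^{-\Omega(n^2)}$ numerator) is that $\nu[\allball] \geq e^{-o(n^2)}$; but this requires the \emph{lower-bound} half of the Chatterjee--Diaconis LDP, applied to the open ball around $W_{p^*}$ together with the fact that the rate function vanishes there because $p^*$ is a global maximizer of $L_{\bm\beta}$. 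That lower bound exists in \cite{chatterjee2013estimating} but is not part of Theorem \ref{thm:LDP} as quoted, so it needs to be invoked explicitly rather than waved off as standard. A second, more minor point: the assertion that one may take $\eta_\delta \leq \eta$ ``without loss of generality'' is not automatic, since Theorem \ref{thm:goodsetlarge} does not claim that its conclusion is monotone in the radius $\eta$ (shrinking the conditioning ball shrinks both numerator and denominator, so the conditional probability is not obviously monotone). The case $\eta_\delta > \eta$ can be handled by a symmetric version of your argument, again using the LDP lower bound to compare $\nu[B_{\eta_\delta}^\square(p^*)]$ to $\nu[\allball]$, so this is a patchable omission rather than a fatal one, but it should be addressed rather than dismissed.
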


As another corollary of Theorem \ref{thm:goodsetlarge} (since Erd\H{o}s--R\'enyi models are also ERGMs), which can also be proved directly,
we obtain the following, which states that an Erd\H{o}s--R\'enyi random graph
with density parameter near $p^*$ will also land in $\Gamma_\eps$ with high probability.

\begin{corollary}[Corollary 4.7 of \cite{bresler2024metastable}]
\label{cor:GNPgoodset} 
Let $\eps, \eta > 0$ be as in Theorem \ref{thm:goodsetlarge}.
Then, for any small enough $\delta \geq 0$,
\begin{align}
    \pi[\Gamma_\eps] \geq 1 - e^{-\Omega_\delta(n)},
\end{align}
where $\pi = \cG(n,p^*+\delta)$.
\end{corollary}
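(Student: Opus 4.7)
The plan is to derive Corollary \ref{cor:GNPgoodset} as an application of Theorem \ref{thm:goodsetlarge} to a trivial ERGM, rather than proving concentration of subgraph counts from scratch. First I would observe that $\pi = \cG(n, p^* + \delta)$ is itself the ferromagnetic ERGM with trivial specification $K = 0$ (only the single edge $G_0$) and parameter $\bm\beta' = (\beta_0')$, where $\beta_0' = \tfrac{1}{2} \log\tfrac{p^* + \delta}{1 - p^* - \delta}$ is chosen so that $\phi(2\beta_0') = p^* + \delta$. For this trivial ERGM, the function $L_{\bm\beta'}(p) = \beta_0' p - I(p)$ is strictly concave on $[0,1]$ (since $-I''(p) < 0$), so $p^* + \delta$ is the unique global and local maximizer and automatically satisfies $L_{\bm\beta'}''(p^* + \delta) < 0$. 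Hence $U_{\bm\beta'} = M_{\bm\beta'} = \{p^* + \delta\}$, and all hypotheses of Theorem \ref{thm:goodsetlarge} are in place for this auxiliary ERGM.

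Next I would apply Theorem \ref{thm:goodsetlarge} to this trivial ERGM with accuracy parameter $\eps/2$ (in place of $\eps$), producing some $\eta' > 0$ such that $\pi$ conditioned on $B_{\eta'}^\square(p^* + \delta)$ places mass at least $1 - e^{-\Omega_\delta(n)}$ on $\Gamma_{p^* + \delta, \eps/2}$. Combining this with Theorem \ref{thm:LDP} applied to $\pi$, whose graphon limit is $W_{p^* + \delta}$, one has $\pi[B_{\eta'}^\square(p^* + \delta)] \geq 1 - C(\eta') e^{-c(\eta') n^2}$, which is in particular $\geq 1 - e^{-\Omega_\delta(n)}$. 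These two facts combine to yield
\begin{equation}
    \pi[\Gamma_{p^* + \delta, \eps/2}] \geq 1 - e^{-\Omega_\delta(n)}.
\end{equation}

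Finally, for $\delta \leq \eps/2$, the triangle inequality applied pointwise in the defining condition of $\Gamma_{\cdot,\cdot}$ gives $\Gamma_{p^* + \delta, \eps/2} \sse \Gamma_{p^*, \eps} = \Gamma_\eps$, and the desired bound follows. The main (minor) point to check is that Theorem \ref{thm:goodsetlarge}, although presented in the context of the fixed ERGM with parameters $\bm\beta$, is in fact formulated in \cite{bresler2024metastable} for a general ferromagnetic ERGM and is thus applicable to the trivial specification; this is a straightforward reading of the cited statement. Alternatively, one could prove Corollary \ref{cor:GNPgoodset} directly by establishing $e^{-\Omega_\delta(n)}$-concentration of each $N_G(X,e)$ for $X \sim \pi$ around its mean $2|E|n^{|V|-2}(p^* + \delta)^{|E|-1}(1 + O(1/n))$ via a polynomial concentration inequality such as Kim--Vu, then union-bounding over the finitely many $G \in \G_L$ and the $O(n^2)$ edges $e$. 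The obstacle in that direct route is that elementary bounded-differences arguments do not suffice: a single edge flip can change $N_G(X,e)$ by $\Theta(n^{|V|-3})$, so Azuma would only yield constant-order concentration of $r_G(X,e)$, and one must genuinely exploit the low-degree polynomial structure of $N_G(X,e)$ in the independent edge Bernoullis to reach the exponential rate.
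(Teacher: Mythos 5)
The paper does not supply a proof of this statement; it is imported verbatim as Corollary 4.7 of \cite{bresler2024metastable}, so there is no internal derivation to compare against. Assessing your proposal on its own terms: the route via the trivial ERGM is logically plausible, and your verification that $L_{\bm\beta'}$ is strictly concave on $[0,1]$ with unique maximizer $p^*+\delta$ is correct, as is the inclusion $\Gamma_{p^*+\delta,\eps/2} \sse \Gamma_{p^*,\eps}$ for $\delta \leq \eps/2$. But it is heavy machinery (Theorem \ref{thm:goodsetlarge} is the deepest result of the cited work, proven via the cavity method) for what is really a concentration fact about $\cG(n,p)$, and there is one gap you do not flag: the definition of $\Gamma_{p,\eps}$ depends on the cutoff $L > \max_i|V_i|$, so applying Theorem \ref{thm:goodsetlarge} to the trivial specification with only the minimal $L' > 2$ would control $r_G$ for far fewer graphs $G$ than the $\Gamma_\eps$ in the conclusion requires. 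One must verify that the cited theorem allows $L$ to be taken at least as large as the original ERGM's.

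More importantly, your diagnosis of the obstacle in the direct route is incorrect. You claim that ``a single edge flip can change $N_G(X,e)$ by $\Theta(n^{|V|-3})$, so Azuma would only yield constant-order concentration,'' but that worst case is attained only by the $\Theta(n)$ edges $e'$ sharing a vertex with $e$; the remaining $\Theta(n^2)$ edges disjoint from $e$ change $N_G(X,e)$ by at most $O(n^{|V|-4})$. This is precisely the Lipschitz-vector computation the paper itself performs for $\partial_e H$ in Section \ref{sec:applications_wasserstein}. Feeding the heterogeneous bounds into McDiarmid gives
\begin{equation}
    \sum_{e'} c_{e'}^2 \lesssim n \cdot n^{2|V|-6} + n^2 \cdot n^{2|V|-8} \lesssim n^{2|V|-5},
\end{equation}
and since $|r_G(X,e) - p^*| \leq \eps$ translates, via the bi-Lipschitz map $u \mapsto u^{1/(|E|-1)}$ near $(p^*)^{|E|-1}$, to $|N_G(X,e) - \E N_G(X,e)| \gtrsim \eps n^{|V|-2}$ once $\delta$ is small relative to $\eps$, the resulting tail is $\exp(-\Theta(\eps^2 n^{2|V|-4}/n^{2|V|-5})) = e^{-\Omega(n)}$. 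A union bound over the $O(n^2)$ choices of $e$ and the finitely many $G \in \G_L$ closes the argument. No Kim--Vu, Janson, or hypercontractivity input is needed; the issue you identified was an artifact of using a uniform bounded-differences constant rather than the sharper edge-by-edge bounds.
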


The above corollary is crucial as we will need to compare the ERGM with a sample from $\cG(n,p^*+\delta)$ in order to
obtain appropriate connectivity properties of the good set $\Lambda$, which will allow us to use path coupling.
Another important ingredient regarding the Erd\H{o}s--R\'enyi measure is the following, which states that if we start
with a sample from $\pi = \cG(n,p^*+\delta)$ and evolve it \emph{under the ERGM Glauber dynamics}, then the entire
process is unlikely to leave $\allhalfball$ for a long time.
This is quite similar to \cite[Lemma 4.10]{bresler2024metastable}, although in that lemma the measure
$\cG(n,p^*)$ was considered instead of $\cG(n,p^*+\delta)$.
Nevertheless, essentially the same proof applies, so we simply state the result here.

\begin{lemma}
\label{lem:GNPstayinball}
For all $\eta > 0$, the following holds for all small enough $\delta > 0$.
Let $(\tilde{X}_t^Z)$ be the unconditioned ERGM Glauber dynamics, but started at $Z \sim \cG(n,p^*+\delta)$.
Then the entire trajectory up to time $T$ stays within $B_{\eta/2}^\square(p^*)$ with probability at least
$1 - Te^{-\Omega_\delta(n)}$.
\end{lemma}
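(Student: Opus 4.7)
The plan is to combine control of the initial condition from Corollary \ref{cor:GNPgoodset} with the long-term invariance of $\Gamma_\eps$ under Glauber dynamics from Lemma \ref{lem:staygood}, and then translate membership in $\Gamma_\eps$ into cut-distance closeness to $W_{p^*}$. Specifically, I would proceed in three steps: (i) show $X_0 \in \Gamma_{\eps/2}$ with probability at least $1 - e^{-\Omega_\delta(n)}$; (ii) invoke Lemma \ref{lem:staygood} to show the trajectory stays in $\Gamma_\eps$ up to time $T$ with high probability; and (iii) show that $\Gamma_\eps \subseteq B_{\eta/2}^\square(p^*)$ once $\eps$ is chosen small enough in terms of $\eta$.

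For step (i), Corollary \ref{cor:GNPgoodset} applied to $\pi = \cG(n, p^* + \delta)$ gives $X_0 \in \Gamma_{\eps/2}$ with probability at least $1 - e^{-\Omega_\delta(n)}$, provided $\delta$ is small enough depending on $\eps$. For step (ii), Lemma \ref{lem:staygood} asserts that starting from any $X_0 \in \Gamma_{\eps/2}$, the trajectory remains in $\Gamma_\eps$ for all $t < e^{\alpha n}$ with probability $1 - e^{-\alpha n}$, uniformly over the starting point; combined with (i), this handles the full horizon $t \leq T$ as long as $T \leq e^{\alpha n}$. The factor of $T$ appearing in the final bound is pessimistic here and accommodates a possible union-bound argument over time steps.

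The main obstacle is step (iii): translating the \emph{local} subgraph-count control encoded in $\Gamma_\eps$ into \emph{global} cut-distance control. The idea is that, since $r_G(X, e)$ is within $\eps$ of $p^*$ uniformly over $e \in \binom{[n]}{2}$ and $G \in \G_L$, aggregating $N_G(X,e)$ across edges yields $t(G, W_X) \approx (p^*)^{|E|}$ for every $G \in \G_L$, so that $W_X$ agrees with $W_{p^*}$ on a rich family of test graphs. The quantitative equivalence between convergence in cut distance and convergence of all subgraph densities from \cite{lovasz2006limits, borgs2008convergent} then yields $\db(W_X, W_{p^*}) \leq \eta/2$ for $\eps$ sufficiently small in terms of $\eta$. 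Since the only change from the proof of Lemma 4.10 of \cite{bresler2024metastable} is the replacement of $\cG(n, p^*)$ by $\cG(n, p^* + \delta)$, which enters only through step (i) via the small perturbation $\delta$, the remainder of the argument should transfer without modification.
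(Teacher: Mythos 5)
The paper does not give a self-contained proof of this lemma; it cites \cite[Lemma 4.10]{bresler2024metastable} and remarks that essentially the same proof applies after replacing $\cG(n,p^*)$ by $\cG(n,p^*+\delta)$. Your steps (i) and (ii) are fine on their own terms: Corollary \ref{cor:GNPgoodset} supplies the initial condition and Lemma \ref{lem:staygood} supplies the stability of $\Gamma_\eps$. The problem is step (iii), where you assert $\Gamma_\eps \subseteq B_{\eta/2}^\square(p^*)$ for $\eps$ small enough.

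The paper explicitly disclaims precisely this inclusion in Section \ref{sec:lipschitz_inputs}, right after Theorem \ref{thm:goodsetlarge}: it remarks that Theorem \ref{thm:goodsetlarge} ``does not mean $\Gamma_\eps$ is contained in $B_\eta^\square(p^*)$, and the truth of this statement is not clear since the definition of $\Gamma_\eps$ involves the local edge counts of only finitely many graphs.'' The mechanism you cite for (iii), the quantitative inverse counting lemma from \cite{lovasz2006limits,borgs2008convergent}, cannot establish it: controlling subgraph densities of graphs on at most $L$ vertices yields a cut-distance bound of order $1/\sqrt{\log L}$ that does \emph{not} decrease as $\eps \to 0$; it decreases only as $L \to \infty$, whereas $L$ is a fixed constant in the definition of $\Gamma_\eps$ (and Lemmas \ref{lem:staygood} and \ref{lem:contractgood}, imported from \cite{bhamidi2008mixing}, are proved for fixed $L$). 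One could imagine rescuing (iii) by a Chung--Graham--Wilson-style spectral argument ($\Gamma_\eps$ with $L \geq 4$ controls the $C_4$-density, and a $C_4$-density close to $(p^*)^4$ forces small cut distance at a rate polynomial in the discrepancy), but that is a different mechanism than the one you invoke and is not the route the paper relies on.

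The factor $T$ in the bound is itself a signal that the actual argument in \cite{bresler2024metastable} union-bounds over time steps via a one-step cut-distance estimate, rather than passing through $\Gamma_\eps$-membership for the full horizon as you propose: your steps (i)--(ii) alone, if (iii) held, would deliver the strictly stronger uniform bound $1 - e^{-\Omega_\delta(n)}$ for all $T < e^{\alpha n}$. To repair the proposal you would need either to establish the inclusion $\Gamma_\eps \subseteq B_{\eta/2}^\square(p^*)$ by a different argument, or to reconstruct the one-step cut-distance estimate underlying \cite[Lemma 4.10]{bresler2024metastable} directly.
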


The above lemma also holds for the conditioned stationary dynamics $(X_t)$ with $X_0 \sim \mu$, but
that is easier to prove simply by a union bound (we will use this later).
The importance of the conditioned dynamics staying in $\allhalfball$, which we henceforth abbreviate as $\halfball$,
is that it allows us to guarantee that the monotone coupling between various
Glauber dynamics with respect to $\mu$ preserve orderings of configurations.
If one of the chains attempted to leave $\ball$, it may be stopped and allow another chain in the monotone coupling
to break the ordering.
However, since one step in the dynamics only changes one edge, as long as $n$ is large enough, if $x$ is in
$\halfball$, then there is no way for $X_1^x$ to attempt to leave $\ball$.

Finally, we remark that both $\Gamma_\eps$ and $\halfball$ are \emph{intervals} in the Hamming order, in the
sense that if $X \preceq Y \preceq Z$ and $X, Z \in \Gamma_\eps$, then $Y \in \Gamma_\eps$ (and the same holds
for $\halfball$).
To see why $\Gamma_\eps$ is an interval, notice that $N_G(X,e)$ is an increasing function of $X$, and so
$r_G(X,e)$ is as well, and $\Gamma_\eps$ is the intersection of preimages of real intervals under these functions.
As for $\halfball$, we can apply the following lemma.

\begin{lemma}[Lemma 7.3 of \cite{bresler2024metastable}]
For $p \in [0,1]$, let $W_p$ denote the constant graphon with value $p$.
If $X$, $Y$, and $Z$ are graphons such that $X \preceq Y \preceq Z$, then $\db(Y,W_p) \leq \max\{\db(X,W_p), \db(Z,W_p)\}$.
\end{lemma}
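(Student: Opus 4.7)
The plan is to exploit the fact that since one of the graphons in the cut distance is the constant graphon $W_p$, the infimum over measure-preserving bijections in the definition \eqref{eq:cutdist} becomes vacuous, giving the simpler formula
\begin{equation}
    \db(W, W_p) = \sup_{S, T \subseteq [0,1]} \left| \int_S \int_T (W(x,y) - p) \, dx \, dy \right|.
\end{equation}
This turns the problem into a question about a sup-norm of a linear functional of the graphon, which interacts well with the pointwise ordering $\preceq$.

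First I would fix an arbitrary pair $(S,T)$ of measurable subsets of $[0,1]$ and define the three real numbers $a(S,T) = \int_S \int_T (X - p)$, $b(S,T) = \int_S \int_T (Y - p)$, and $c(S,T) = \int_S \int_T (Z - p)$. Since integration of an inequality that holds pointwise preserves the inequality, the hypothesis $X \preceq Y \preceq Z$ immediately gives $a(S,T) \leq b(S,T) \leq c(S,T)$. The key elementary observation is that whenever a real number $b$ lies in a closed interval $[a,c]$, one has $|b| \leq \max\{|a|, |c|\}$: indeed, if $b \geq 0$ then $|b| = b \leq c \leq |c|$, while if $b < 0$ then $|b| = -b \leq -a \leq |a|$.

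Applying this observation to each triple $(a(S,T), b(S,T), c(S,T))$ and then taking the supremum over $(S,T)$, and using the standard fact that $\sup_{S,T} \max\{|a(S,T)|, |c(S,T)|\} = \max\{\sup_{S,T} |a(S,T)|, \sup_{S,T} |c(S,T)|\}$, we obtain
\begin{equation}
    \db(Y, W_p) = \sup_{S,T} |b(S,T)| \leq \max\left\{\sup_{S,T} |a(S,T)|, \sup_{S,T} |c(S,T)|\right\} = \max\{\db(X, W_p), \db(Z, W_p)\},
\end{equation}
which is the desired conclusion. There is really no obstacle here; the entire argument hinges on the two trivial facts that the cut distance against a constant graphon reduces to a sup over integrals of $W - p$, and that absolute value is sandwiched along any chain of real numbers. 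The lemma could fail in more generality only if the infimum over bijections $\sigma$ were nontrivial, which is precisely what is avoided by the constancy of $W_p$.
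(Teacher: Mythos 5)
The paper does not actually prove this lemma; it imports it verbatim as Lemma 7.3 of \cite{bresler2024metastable}, so there is no in-paper proof to compare against. Your argument is correct and self-contained: since $W_p$ is constant, the infimum over measure-preserving bijections in \eqref{eq:cutdist} is indeed vacuous (the paper itself notes this in Section \ref{sec:review_ergm}); integrating the pointwise order $X \preceq Y \preceq Z$ over each fixed $S\times T$ yields $a(S,T)\leq b(S,T)\leq c(S,T)$; the sandwich bound $|b|\leq\max\{|a|,|c|\}$ for $a\leq b\leq c$ is elementary; and passing to the supremum over $(S,T)$ (using $\sup\max\{|a|,|c|\}=\max\{\sup|a|,\sup|c|\}$) gives exactly the claimed inequality. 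This is the natural and presumably intended proof of the cited lemma.
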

\subsection{Construction and properties of a good starting set}
\label{sec:lipschitz_goodset}

In this section we will construct a good starting set $\Lambda$.
Proposition \ref{prop:goodset} below shows the existence of a set $\Lambda$ which satisfies a few crucial properties
that allow us to verify \eqref{eq:lambdalarge}, \eqref{eq:dtvbound}, and \eqref{eq:contraction}, in order to finish
the proof of Theorem \ref{thm:lipschitzconcentration}.
In particular, property \ref{property:lambdalarge} in the proposition statement below is the same as \eqref{eq:lambdalarge}.

The other two properties listed in Proposition \ref{prop:goodset} below will be used in the next subsection to verify
\eqref{eq:dtvbound} and \eqref{eq:contraction}, for which we would like to apply a path coupling argument using the
contraction properties of $\Gamma_\eps$ detailed in Lemma \ref{lem:contractgood}.
As mentioned, the problem with simply taking $\Lambda = \Gamma_\eps$ for suitable $\eps > 0$ is that this set
may not be connected in the Hamming order, which blocks an easy application of path coupling to obtain
contraction throughout $\Gamma_\eps$.
We will remedy this by showing that, even if $\Gamma_\eps$ is not connected, there is a large subset of $\Gamma_\eps$
which is connected, and we will take $\Lambda$ to be this large connected subset.

Here and in the sequel, for any set $A \sse \ball$ and $x, x' \in A$, we will use the notation $\d_A(x,x')$ to denote
the minimum Hamming-length of a path inside $A$ connecting $x$, and $x'$.
Note that this is finite for all $x, x' \in A$ if and only if $A$ is connected in the nearest-neighbor Hamming cube.
Additionally, here we emphasize again that we use $(X_t)$ (with no tilde) for the \emph{conditioned} stationary Glauber dynamics,
i.e.\ the one which has stationary distribution $\mu$ (conditioned on $\ball$), and $(X_t^x)$ for the conditioned dynamics starting at $x$.

\begin{proposition}[Good starting set $\Lambda$]
\label{prop:goodset}
For all small enough $\eps > 0$, there is some $\eta > 0$ such that for all large enough $n$ there is a set
$\Lambda \sse \ball$ satisfying the following properties:
\begin{enumerate}[label=(\Roman*)]
    \item
    \label{property:lambdalarge}
    Letting $\mu$ denote the ERGM measure conditioned on $\ball$, we have
    \begin{equation}
        \mu(\Lambda) \geq 1 - e^{-\Omega(n)}.
    \end{equation}
    \item
    \label{property:lambdaplusstaygood}
    There is some $\alpha > 0$ such that whenever $\dh(x,\Lambda) \leq 1$, we have
    \begin{equation}
        \P \left[ X_t^x \in \Gamma_\eps \cap \halfball \text{ for all } t < e^{\alpha n} \right] \geq 1 - e^{-\Omega(n)}.
    \end{equation}
    \item
    \label{property:lambdadiambound}
    For any $x,x' \in \Lambda$,
    \begin{equation}
        \dl(x,x') \leq 2 n^2.
    \end{equation}
\end{enumerate}
\end{proposition}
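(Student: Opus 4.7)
The plan is to construct $\Lambda$ as a Hamming-connected subset of $\Gamma_{\eps/4} \cap \halfball$ that carries almost all of the $\mu$-mass, has internal diameter bounded by $2n^2$, and whose one-step Hamming neighborhood remains safely inside $\Gamma_\eps \cap \halfball$. I would fix $\eps > 0$ small enough for Lemmas \ref{lem:staygood} and \ref{lem:contractgood} to apply, and take the corresponding $\eta > 0$ from Theorem \ref{thm:goodsetlarge}.

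For property \ref{property:lambdalarge}, I combine Corollary \ref{cor:goodsetlarge}, which gives $\mu[\Gamma_{\eps/4}] \geq 1 - e^{-\Omega(n)}$, with an argument from the large deviations principle (Theorem \ref{thm:LDP}) showing $\mu[\halfball] \geq 1 - e^{-\Omega(n)}$, since configurations in the shell $\ball \setminus \halfball$ pay an $e^{-\Omega(n^2)}$ cost relative to configurations near $W_{p^*}$. For property \ref{property:lambdadiambound}, I note that both $\Gamma_{\eps/4}$ and $\halfball$ are order-convex (by monotonicity of $r_G$ and by \cite[Lemma 7.3]{bresler2024metastable}, respectively), but their intersection need not be Hamming-connected. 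I would therefore define $\Lambda$ as the dominant Hamming-connected component of $\Gamma_{\eps/4} \cap \halfball$: any other components correspond to isolated metastable pockets of $\mu$-mass $e^{-\Omega(n)}$ (in the spirit of \cite[Theorem 3.3]{bresler2024metastable}) and can be absorbed into the error. To bound the internal diameter, for $x, x' \in \Lambda$ I would construct a Hamming path by balanced interpolation: pair each edge in $S_+ = \{e : x_e = 0,\, x'_e = 1\}$ with one in $S_- = \{e : x_e = 1,\, x'_e = 0\}$, and process the pairs in alternating add-remove order. This keeps the global edge count within $1$ of its starting value throughout, and a careful analysis using a uniformly distributed pairing ensures each local count $r_G(y, e)$ stays in $[p^* - \eps/4, p^* + \eps/4]$ along the way. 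The total length is $|S_+| + |S_-| = \dh(x, x') \leq \binom{n}{2} \leq n^2$, well within the required bound.

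Property \ref{property:lambdaplusstaygood} then reduces to checking that $\Lambda_+ \subseteq \Gamma_{\eps/2} \cap \halfball$, which holds for large $n$ since a single edge flip changes each $r_G$ by $O(1/n)$ and the cut distance by $O(1/n^2)$. The exponential-time stay in $\Gamma_\eps$ then follows directly from Lemma \ref{lem:staygood}, while the exponential-time stay in $\halfball$ is obtained by an analogous stability argument adapted from the techniques of \cite{bresler2024metastable}, exploiting the concentration of $\mu$ near $W_{p^*}$ and the small marginal influence of each edge on the cut distance. The main obstacle is controlling local counts in the diameter step: global edge count stability along the balanced-swap path is immediate, but ensuring every $r_G(y, e)$ remains in $[p^* - \eps/4, p^* + \eps/4]$ uniformly over all intermediate $y$ and all edges $e$ appears to require a delicate combinatorial argument, likely invoking a random matching between $S_+$ and $S_-$ and a concentration bound over edge neighborhoods to prevent any local region from accumulating too many same-direction flips at any intermediate time.
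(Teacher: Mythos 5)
Your proposal diverges from the paper's proof in a way that leaves two genuine gaps, and both occur at the hard core of the argument.

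First, you define $\Lambda$ as the ``dominant Hamming-connected component'' of $\Gamma_{\eps/4} \cap \halfball$ and assert that the remaining components carry only $e^{-\Omega(n)}$ of the $\mu$-mass, ``in the spirit of'' the anomalous-pocket result of \cite[Theorem 3.3]{bresler2024metastable}. But nothing in the cited results rules out a priori that $\Gamma_{\eps/4} \cap \halfball$ splits into two (or more) Hamming-connected pieces each of constant $\mu$-mass; establishing that a single component dominates is precisely what needs to be proved, and you have asserted it rather than proved it. The paper's proof addresses this head-on: it takes two independent samples $X, X' \sim \mu$, edge-by-edge resamples both while simultaneously generating $Z \sim \cG(n,p^*+\delta)$ under the uniform coupling, and shows that with probability $1-e^{-\Omega(n)}$ the resampled configurations $X_N$ and $X_N'$ are both dominated by $Z$ and that all of $X_0,\dotsc,X_N,X_0',\dotsc,X_N',Z$ land in a set $\Xi_-$ which is an interval in the Hamming order. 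This gives a concrete path $X \to X_N \to Z \to X_N' \to X'$ within $\Xi_-$, which is exactly what certifies a single dominant component and furnishes property \ref{property:lambdalarge}.

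Second, and more critically, your balanced-interpolation path for property \ref{property:lambdadiambound} does not work, and the obstacle you flag at the end is not a technicality that a ``delicate combinatorial argument'' will repair. Alternating add/remove steps controls only the \emph{global} edge count of the intermediate states; it gives no control over the local statistics $r_G(y,e)$ that define $\Gamma_{\eps/4}$. If, say, the differing edges of $x$ and $x'$ are concentrated near a handful of vertices, then any pairing will push some $r_G(y,e)$ out of range mid-path, and randomizing the matching does not fix this because the local counts near a single vertex depend on far fewer than $n^2$ edges. The paper avoids this entirely: the legs $X_N \to Z$ and $Z \to X_N'$ of its path are \emph{monotone} (because $X_N \preceq Z$ and $X_N' \preceq Z$), so every intermediate state is sandwiched between two configurations already known to lie in $\Xi_-$; the interval property of $\Xi_-$ then places the whole leg inside $\Xi_-$ with no pointwise tracking of $r_G$ at all. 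The monotone-coupling structure is what makes the diameter bound tractable, and your balanced (hence non-monotone) path construction forfeits it.

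A smaller but real issue: for property \ref{property:lambdaplusstaygood} you invoke Lemma \ref{lem:staygood} for $\Gamma_\eps$ and then say the $\halfball$ part follows by ``an analogous stability argument adapted from the techniques of \cite{bresler2024metastable}.'' The paper instead builds the set $\Xi$ directly from the hitting function $h_T(x)$, which bundles the $\Gamma_\eps$ and $\halfball$ constraints together (via Lemmas \ref{lem:initialgoodset} and \ref{lem:allneighborsgood}), and obtains property (II) for all of $\Lambda_+$ by construction rather than by patching two stability statements together. You should also be careful that Lemma \ref{lem:staygood} concerns the \emph{unconditioned} dynamics, whereas the statement of property (II) is about the conditioned chain; the paper handles this by ensuring the trajectory stays in $\halfball$, where the hard wall never acts, so the two dynamics coincide.
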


To prove Proposition \ref{prop:goodset},
we start with the following lemma which indicates that for most starting positions (with respect to $\mu$),
the Glauber dynamics remains in $\Gamma_\eps \cap \halfball$.
We also need an analogous statement when the starting position is chosen according to $\pi = \cG(n,p^*+\delta)$
for some small $\delta > 0$, still considering the ERGM Glauber dynamics.

\begin{lemma}
\label{lem:initialgoodset} 
For all small enough $\eps > 0$, there is some $\eta > 0$ such that the following holds.
Define
\begin{equation}
    h_T(x) \coloneqq \P \left[ X_t^x \notin \Gamma_\eps \cap \halfball \text{ for some } t < T \right].
\end{equation}
Then there is some $\xi > 0$ such that
\begin{equation}
\label{eq:initialgoodset_mu}
    \mu[h_{e^{\xi n}}(X) \geq e^{-\xi n}] \leq e^{- \xi n}.
\end{equation}
Additionally, for all small enough $\delta > 0$, there is some $\zeta > 0$ such that
\begin{equation}
\label{eq:initialgoodset_pi}
    \pi[h_{e^{\zeta n}}(Z) \geq e^{-\zeta n}] \leq e^{- \zeta n},
\end{equation}
where $Z \sim \pi = \cG(n,p^*+\delta)$.
\end{lemma}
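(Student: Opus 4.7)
The plan is to handle the two bounds separately, as they have very different structures. For \eqref{eq:initialgoodset_mu}, the key observation is that the conditioned Glauber dynamics $(X_t)$ has $\mu$ as its stationary distribution, so $X_t \sim \mu$ for every $t \geq 0$. A union bound then gives
\begin{equation*}
    \E_\mu[h_T(X)]
    = \P\bigl[X_t \notin \Gamma_\eps \cap \halfball \text{ for some } t < T\bigr]
    \leq T \cdot \mu\bigl[(\Gamma_\eps \cap \halfball)^\comp\bigr].
\end{equation*}
The right side is controlled by combining $\mu[\Gamma_\eps^\comp] \leq e^{-c_1 n}$ from Corollary~\ref{cor:goodsetlarge} with $\mu[\halfball^\comp] \leq e^{-c_2 n^2}$, the latter following from the large-deviations principle (Theorem~\ref{thm:LDP}): for $\eta$ small enough, $W_{p^*}$ is the only maximizer of $L_{\bm\beta}$ inside $\ball$, so the annulus $\ball \setminus \halfball$ is bounded away from every global maximizer in cut distance, and the uniform lower bound $\nu[\ball] \geq c > 0$ (where $\nu$ is the unconditioned ERGM) yields the claim after normalizing. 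Setting $T = e^{\xi n}$ and applying Markov's inequality then delivers \eqref{eq:initialgoodset_mu} for any $\xi < c_1/3$.

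For \eqref{eq:initialgoodset_pi}, since $\pi$ is not stationary for the Glauber chain, I would instead couple the conditioned dynamics $(X_t)$ with the \emph{unconditioned} Glauber dynamics $(Y_t)$, both started from $Z$, by using the same proposed edge and uniform randomness at each step. Because a single edge flip changes the cut distance of the corresponding graphon by at most $O(1/n^2) \ll \eta/2$, the chain $Y$ cannot leave $\ball$ in one step from any state in $\halfball$, so $X_t = Y_t$ for all $t < T$ on the event $\{Y_s \in \halfball \text{ for all } s < T\}$. This reduces the task to bounding, for $\pi$-typical $Z$, the probabilities that $Y$ leaves $\Gamma_\eps$ or leaves $\halfball$ within time $T$. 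The former is handled \emph{pointwise} in $Z$ by Lemma~\ref{lem:staygood}, provided $Z \in \Gamma_{\eps/2}$. The latter is handled only \emph{on average} over $Z \sim \pi$ by Lemma~\ref{lem:GNPstayinball}, but Markov's inequality promotes this to a pointwise bound for all but an $e^{-\zeta n}$-fraction of $Z$. Finally, Corollary~\ref{cor:GNPgoodset} (applied with radius $\eps/2$), together with the Erd\H{o}s-R\'enyi large-deviations bound, ensures $Z \in \Gamma_{\eps/2} \cap \halfball$ with $\pi$-probability at least $1 - e^{-\Omega_\delta(n)}$. Combining these three good events gives $h_T(Z) \leq e^{-\alpha n} + e^{-\zeta n}$, and \eqref{eq:initialgoodset_pi} then follows by slightly decreasing $\zeta$ to absorb the factor of~$2$.

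The main challenge is just the careful bookkeeping of exponential rates: we must choose $\delta > 0$ small enough for Corollary~\ref{cor:GNPgoodset} and Lemma~\ref{lem:GNPstayinball} to both apply, and then $\zeta > 0$ small enough that $T = e^{\zeta n}$ sits below the time horizons $e^{\alpha n}$ and $e^{\Omega_\delta(n)}$, and also small enough that the Markov step on the averaged bound from Lemma~\ref{lem:GNPstayinball} yields a bound of the required form $e^{-\zeta n}$. I anticipate no conceptual obstacle: the lemma is essentially a packaging of the already-proved metastable stability results of \cite{bhamidi2008mixing,bresler2024metastable} into a form suitable for launching the path-coupling argument that will enter the construction of $\Lambda$ in Proposition~\ref{prop:goodset}.
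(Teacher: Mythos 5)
Your proof is correct and follows essentially the same route as the paper: stationarity plus a union bound and Markov's inequality for the $\mu$ part (using Theorem~\ref{thm:goodsetlarge} and Theorem~\ref{thm:LDP} to bound the single-step failure probability), and Corollary~\ref{cor:GNPgoodset}, Lemma~\ref{lem:staygood}, and Lemma~\ref{lem:GNPstayinball} combined with Markov's inequality for the $\pi$ part. Your explicit coupling of the conditioned and unconditioned Glauber chains in the $\pi$ case makes precise a point the paper leaves implicit, but does not change the argument.
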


\begin{proof}[Proof of Lemma \ref{lem:initialgoodset}]
Let us first examine the claim about $\mu$.
Consider a trajectory $(X_t)$ of the stationary Glauber dynamics in $\ball$ started at $X_0 \sim \mu$.
Since each $X_t \sim \mu$ as well, by Theorems \ref{thm:goodsetlarge}
and \ref{thm:LDP}, there is some $\gamma > 0$ for which
\begin{equation}
    \P[X_t \notin \Gamma_\eps \cap \halfball] \leq e^{-\gamma n}
\end{equation}
uniformly in $t$.
So, by a union bound, for any $T$ we have
\begin{equation}
    \P[X_t \notin \Gamma_\eps \cap \halfball \text{ for some } t < T] \leq T e^{-\gamma n}.
\end{equation}
Thus, by Markov's inequality, singling out the first step of the chain, if $X \sim \mu$, we have
\begin{equation}
    \mu[h_T(X) \geq r] \leq \fr{1}{r} T e^{-\gamma n}.
\end{equation}
Therefore we can take $T = e^{\gamma n / 3}$ and $r = e^{- \gamma n /3}$, and we obtain
\begin{equation}
    \mu[h_{e^{\gamma n / 3}}(X) \geq e^{-\gamma n / 3}] \leq e^{\gamma n / 3} e^{\gamma n / 3} e^{- \gamma n}
    = e^{- \gamma n / 3},
\end{equation}
meaning we can take $\xi = \fr{\gamma}{3}$ to obtain \eqref{eq:initialgoodset_mu}.

Next we turn to $\pi$, the Erd\H{o}s--R\'enyi measure with parameter $p^*+\delta$ for some small $\delta > 0$.
First, by Lemma \ref{lem:GNPstayinball}, for some $\chi > 0$ we have the following result for the unconditioned ERGM dynamics started
at $Z$: here the starting point is $Z \sim \pi$ and the symbol $\P$ denotes \emph{only the probability with respect to the dynamics}.
\begin{equation}
\label{eq:10or1}
    \pi\left[\P \left[ \tilde{X}_t^Z \notin \halfball \text{ for some } t < e^{\chi n} \right] \geq e^{-\chi n}\right] \leq e^{- \chi n}.
\end{equation}
Next, by Corollary \ref{cor:GNPgoodset} for all small enough $\delta > 0$ there is some $\gamma > 0$ such that
\begin{equation}
    \pi[\Gamma_{\eps/2}] \geq 1 - e^{-\gamma n},
\end{equation}
and by Lemma \ref{lem:staygood} this means that we have
\begin{equation}
\label{eq:10or2}
    \pi\left[\P \left[ \tilde{X}_t^Z \notin \Gamma_\eps \text{ for some } t < e^{\alpha n} \right] \geq e^{-\alpha n}\right] \leq e^{- \gamma n}.
\end{equation}
Now, if the Erd\H{o}s--R\'enyi sample $Z$ is such that both
\begin{equation}
    \P \left[ \tilde{X}_t^Z \notin \halfball \text{ for some } t < e^{\chi n} \right] < e^{- \chi n}
    \qquad \text{and} \qquad
    \P \left[ \tilde{X}_t^Z \notin \Gamma_\eps \text{ for some } t < e^{\alpha n} \right] < e^{- \alpha n}
\end{equation}
then by a union bound we have
\begin{equation}
    \P \left[ \tilde{X}_t^Z \notin \Gamma_\eps \cap \halfball \text{ for some } t <  e^{\min\{ \alpha, \chi \} \cdot n} \right]
    < e^{- \chi n} + e^{- \alpha n} \leq 2 e^{- \min\{ \chi , \alpha \} \cdot n}.
\end{equation}
Now on the above event, $(\tilde{X}_t^Z)$ may be coupled perfectly with $(X_t^Z)$ (the conditioned dynamics)
up to time $t < e^{\min\{\alpha,\chi\} \cdot n}$.
So in particular, for such $Z$, we have
\begin{equation}
    \P \left[ X_t^Z \notin \Gamma_\eps \cap \halfball \text{ for some } t <  e^{\min\{ \alpha, \chi \} \cdot n} \right]
    < 2 e^{- \min\{ \chi , \alpha \} \cdot n}
\end{equation}
as well, since if $X_t^Z \neq \tilde{X}_t^Z$ then $\tilde{X}_t^Z$ must have left $\ball \supseteq \halfball$.
Thus, by \eqref{eq:10or1} and \eqref{eq:10or2}, we have \eqref{eq:initialgoodset_pi}
for any $\zeta$ which is strictly less than $\min\{\gamma, \alpha, \chi\}$, as soon as $n$ is large enough.
\end{proof}

Next, we use the following lemma to upgrade the fact that a set has high probability to the fact that with high probability, 
all \emph{neighbors} of a sampled point lie in that set.

\begin{lemma}
\label{lem:allneighborsgood}
Suppose that $\Xi \sse \ball$ satisfies
\begin{equation}
    \mu[\Xi] \geq 1 - e^{-\Omega(n)}.
\end{equation}
Let $\Xi_-$ denote the set of all $x \in \Xi$ for which $x' \in \Xi$ whenever $\dh(x,x') = 1$.
Then we also have
\begin{equation}
    \mu[\Xi_-] \geq 1 - e^{-\Omega(n)}.
\end{equation}
The same holds for $\mu$ replaced by $\pi = \cG(n,p^*+\delta)$ for all small enough $\delta > 0$.
\end{lemma}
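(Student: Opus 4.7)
The plan is a union bound that exploits the fact that Hamming neighbors have comparable probabilities under both $\mu$ and $\pi$. From the definition of the Hamiltonian and the crude bound $N_{G_i}(x, e) \leq 2|E_i| n^{|V_i|-2}$ for each specifying graph, I would first show $|H(x) - H(x')| \leq C(\bm\beta)$ whenever $\dh(x,x') = 1$, so that there is a constant $K > 0$ with $\mu(x) \leq K \mu(x')$ whenever $x, x' \in \ball$ are Hamming neighbors. The analogous ratio for $\pi$ is $\fr{p^* + \delta}{1 - p^* - \delta}$ or its reciprocal, trivially bounded by some $K' > 0$. I would then decompose $\Xi \setminus \Xi_-$ based on whether the offending neighbor lies in $\ball$ or outside of it.

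For the ``inside'' part, define $A_1 = \{x \in \Xi \setminus \Xi_- : \text{some } x' \in \ball \cap \Xi^c \text{ has } \dh(x,x') = 1\}$. Then, switching the order of summation,
\begin{equation}
    \mu[A_1] \leq \sum_{x' \in \ball \cap \Xi^c} \sum_{x : \dh(x,x') = 1} \mu(x) \leq K \binom{n}{2} \mu[\ball \cap \Xi^c] \leq K n^2 e^{-\Omega(n)} = e^{-\Omega(n)},
\end{equation}
using $\mu[\Xi^c] = \mu[\ball \cap \Xi^c] \leq e^{-\Omega(n)}$ since $\mu$ is supported on $\ball$. The $\pi$-case is identical, but since $\pi$ is supported on all of $\Omega$, every neighbor is automatically in the ``inside'' part, so the same calculation with $K'$ in place of $K$ directly yields $\pi[\Xi \setminus \Xi_-] \leq e^{-\Omega(n)}$, completing that case.

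The only subtle piece is the ``outside'' part $A_2 = \{x \in \Xi \setminus \Xi_- : \text{some } x' \notin \ball \text{ has } \dh(x,x') = 1\}$ in the $\mu$-case. For such $x$, since flipping a single edge changes the cut distance by at most $O(n^{-2})$, one must have $\db(W_x, W_{p^*}) \geq \eta - O(n^{-2})$, and for large enough $n$ the set $A_2$ is therefore contained in the annular shell $\ball \setminus \halfball$. Choosing $\eta$ small enough that $W_{p^*}$ is the only element of $M_{\bm\beta}$ within cut distance $\eta$, the large deviations principle Theorem \ref{thm:LDP} implies $\mu[\ball \setminus \halfball] \leq e^{-\Omega(n^2)}$, swamping the $A_1$ bound. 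I expect this boundary estimate to be the only nontrivial part of the argument; everything else is a routine ratio-union bound.
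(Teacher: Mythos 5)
Your proposal is correct, and it reaches the conclusion by a route that is related to but genuinely distinct from the paper's. Both proofs are union bounds with $O(n^2)$ overhead, and both ultimately rest on the same quantitative fact: the ERGM probability of a state and of its Hamming neighbor differ by at most a constant factor, because $|\partial_e H| \leq \sum_i 2|\beta_i||E_i|$. But you express this directly as a ratio of Gibbs weights $\mu(x) \leq K\mu(x')$, whereas the paper phrases it dynamically: it works with the $\mu$-Glauber chain, uses stationarity to write $\P[X'\notin\Xi]=\P[X\notin\Xi]$, and uses the uniform lower bound $\Pcond{X'=x'}{X=x}\geq c/n^2$ (which follows from the same crude Hamiltonian bound, combined with the $\frac{1}{\binom{n}{2}}$ edge-selection probability). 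Your static version is slightly more elementary and avoids introducing the chain or appealing to reversibility.

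The other difference is how the $\ball$-boundary is handled. The paper preemptively replaces $\Xi$ by $\Xi \cap \Gamma_\eps \cap \halfball$ (citing Theorems \ref{thm:goodsetlarge} and \ref{thm:LDP} to keep the mass large), which has the effect of ensuring that every Hamming neighbor of a state in the new $\Xi$ still lies in $\ball$, so the hard-wall mechanism never zeroes out a flip probability. You instead split $\Xi\setminus\Xi_-$ into the interior piece $A_1$ and the boundary shell $A_2$, and kill $A_2$ directly via the observation that $A_2\subseteq\ball\setminus\halfball$ and an LDP estimate. These are two ways of confronting the same issue and they rely on the same underlying input ($\mu$ puts negligible mass near the boundary of $\ball$). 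One small aesthetic advantage of your route is that it does not need $\Gamma_\eps$ at all; the paper restricts to $\Gamma_\eps$ as well, but that restriction is not actually needed for this lemma, since the crude bound on $|\partial_e H|$ already keeps the flip probability bounded away from $0$ and $1$ uniformly on $\Omega$. Your $\pi$-case is also correct and, as you note, simpler because there is no boundary to worry about.
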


\begin{proof}[Proof of Lemma \ref{lem:allneighborsgood}]
First notice that
\begin{equation}
    \mu[\Xi \cap \Gamma_\eps \cap \halfball] \geq 1 - e^{-\Omega(n)}
\end{equation}
as well, and it suffices to prove the statement for $\Xi$ replaced by $\Xi \cap \Gamma_\eps \cap \halfball$,
so let us assume that $\Xi \sse \Gamma_\eps \cap \halfball$ from now on.

Let $X \sim \mu$ and $X'$ be obtained from $X$ by one step of the $\mu$-Glauber dynamics.
Then
\begin{align}
    \Pcond{X' \notin \Xi}{X \in \Xi}
    &= \fr{\P[X \in \Xi, X' \notin \Xi]}{\P[X \in \Xi]} \\
    &\leq \fr{\P[X' \notin \Xi]}{\P[X \in \Xi]} \\
    &\leq \fr{e^{- \Omega(n)}}{1 - e^{- \Omega(n)}} \\
    &\leq e^{-\Omega(n)}.
\end{align}
Now, since $p^* \notin \{0,1\}$ as can be checked by examining the derivative of $L_{\bm\beta}$,
if $\eps > 0$ is small enough, then for all $x \in \Gamma_\eps \cap \halfball$
and all $x'$ with $\dh(x,x') = 1$, we have
\begin{equation}
\label{eq:highprobmovement}
    \Pcond{X' = x'}{X = x} \geq \fr{c}{n^2}
\end{equation}
for some $c > 0$.
The $\fr{1}{n^2}$ factor comes from the fact that each edge is chosen to be updated with equal probability,
and if an edge is chosen, then since $x \in \Gamma_\eps$, the probability of adding that edge
is close to $p^*$ which is a constant distance away from $0$ or $1$, meaning that both options (adding or removing)
have at least constant probability.
Here we also use the fact that when $x \in \halfball$, it is not possible for the Glauber dynamics to attempt to
leave $\ball$, so none of the probabilities are set to zero by the hard wall mechanism.
Therefore,
\begin{align}
    \P[X \notin \Xi_-] &= 
        \sum_{x \in \Xi} \P[X = x] \cdot
        \ind{\text{some neighbor } x' \text{ of } x \text{ is not in } \Xi}
        + \P[X \notin \Xi] \\
    &\leq \sum_{x \in \Xi} \P[X = x] \cdot \fr{n^2}{c} \Pcond{X' \notin \Xi}{X = x}
        + \P[X \notin \Xi] \\
    &= \fr{n^2}{c} \P[X' \notin \Xi \text{ and } X \in \Xi] + \P[X \notin \Xi] \\
    &\leq \fr{n^2}{c} e^{-\Omega(n)} + e^{-\Omega(n)} \\
    &\leq e^{-\Omega(n)}.
\end{align}
This finishes the proof for $\mu$, and the same proof applies to $\pi$, with the key condition
\eqref{eq:highprobmovement} holding trivially for all small enough $\delta > 0$.
\end{proof}

Finally we can prove Proposition \ref{prop:goodset}; the key point is that since we have a set which
is large under both $\mu$ and $\pi$, we can extract a \emph{connected} subset of it which is still large under $\mu$, 
crucially using monotonicity to connect two different samples from $\mu$ via a sample from $\pi$.

\begin{proof}[Proof of Proposition \ref{prop:goodset}]
Fix some small enough $\delta > 0$ and let $\alpha = \min\{\xi,\zeta\}$, where $\xi$ and $\zeta$ are as in
Lemma \ref{lem:initialgoodset}. 
Define $\Xi$ to be the set of $x \in \ball$ for which $h_{e^{\alpha n}}(x) < e^{-\alpha n}$, and
define $\Xi_-$ as in Lemma \ref{lem:allneighborsgood}.
Now by Lemma \ref{lem:initialgoodset}, we have $\mu(\Xi), \pi(\Xi) \geq 1 - e^{\Omega(n)}$, and so by
Lemma \ref{lem:allneighborsgood} we have $\mu(\Xi_-), \pi(\Xi_-) \geq 1 - e^{\Omega(n)}$.

First notice that $\Xi_-$ is an interval, in the sense that if $X \preceq Y \preceq Z$ and
$X, Z \in \Xi_-$, then $Y \in \Xi_-$ as well.
This can be seen by considering the monotone coupling between the Glauber dynamics started at
$X$, $Y$, and $Z$, noting that both $\Gamma_\eps$ and $\halfball$ are intervals in the same sense,
and using the fact that as long as none of the Glauber dynamics leave $\halfball$, the coupling
preserves the ordering of the configurations.

Now let $X, X' \sim \mu$ be two independent samples.
We will use the above facts to show that $X$ and $X'$ are in the same connected component of $\Xi_-$ with high probability.
To do this, we will resample both $X$ and $X'$ edge-by-edge, and simultaneously sample $Z \sim \cG(n,p^*+\delta)$
for some small enough $\delta > 0$.
This technique was inspired by the proof of \cite[Lemma 4.9]{bresler2024metastable}.

Specifically, choose some arbitrary order $e_1, \dotsc, e_N$ on the $N = \binom{n}{2}$ possible edge locations,
and let $X_0 = X$ and inductively for $i = 1, \dotsc, N$, let $X_i$ be obtained from $X_{i-1}$ by resampling the
edge $e_i$ according to its conditional distribution, given $X_{i-1}(e_j)$ for $j \neq i$, using an independent Bernoulli
random variable to determine each $X_i(e_i)$.
Do this simultaneously to generate $X_i'$ from $X_0' = X'$, using the uniform coupling between the Bernoulli
random variables determining $X_i(e_i)$ and $X_i'(e_i)$.
Additionaly, sample $Z$ by setting $Z(e_i) = 1$ with probability $p^* + \delta$, using again the uniform coupling
between the Bernoulli random variables.

First, since $\phi_{\bm\beta}$ is continuous and $\phi_{\bm\beta}(p^*) = p^*$,
there is some $\delta' > 0$ such that $\phi_{\bm\beta}(p^* + \delta') < p^* + \delta$.
Now since $\mu(\Gamma_{\delta'}) \geq 1 - e^{-\Omega(n)}$, by a union bound over the $\leq n^2$ samples $X_0, \dotsc, X_N$
and $X_0', \dotsc, X_N'$, with probability at least $1 - e^{-\Omega(n)}$, all of these samples are in
$\Gamma_{\delta'}$.
On this event, the chance for all $X_i(e_i) = 1$ and $X_i'(e_i) = 1$ is at most $\phi_{\bm\beta}(p^*+\delta') < p^* + \delta$,
meaning that by the end of the procedure we have
\begin{equation}
    X_N \preceq Z \qquad \text{and} \qquad X_N' \preceq Z.
\end{equation}
Additionally, by another union bound, we have that with probability at least $1 - e^{-\Omega(n)}$, $Z$ and all of the $X_i$
and $X_i'$ are in $\Xi_-$.
On this event, $X$ and $X'$ are in the same connected component of $\Xi_-$, since there is a path within $\Xi$ connecting
$X$ to $X_N$, and then another path connecting $X_N$ to $Z$ by the fact that $\Xi_-$ is an interval, and another path
from $Z$ to $X_N'$ and then to $X'$.
Additionally, the total length of the path from $X$ to $X'$ is at most $4 N$, since the paths from $X_N$ and $X_N'$
to $Z$ are monotone and have length at most $N$.

So define $\Lambda$ to be this connected component of $\Xi_-$.
By the above reasoning, we obtain property \ref{property:lambdalarge}, that $\Lambda$ is large.
Property \ref{property:lambdaplusstaygood}, that starting in $\Lambda_+$, the Glauber dynamics stays in
$\Gamma_\eps \cap \halfball$ for a long time with high probability, follows by the construction of $\Xi$ and the
fact that all neighbors of an element of $\Xi_-$ are in $\Xi$.
Finally, the diameter bound, property \ref{property:lambdadiambound}, follows from the bound on the path length in
the previous paragraph, since $4 N \leq 2 n^2$.
\end{proof}
\subsection{Distance bounds along a trajectory of the right length}
\label{sec:lipschitz_bounds}

In this section we will validate \eqref{eq:dtvbound} and \eqref{eq:contraction} for a common
choice of $T$, finishing the proof of Theorem \ref{thm:lipschitzconcentration}.
We will use the starting set $\Lambda$ constructed in the previous section, and also define
\begin{equation}
    \Lambda_+ = \{ x \in \ball : \dh(x,\Lambda) \leq 1 \},
\end{equation}
so that $\Pcond{X' \in \Lambda_+}{X \in \Lambda} = 1$, and moreover
every $x \in \Lambda_+$ satisfies the bound in property \ref{property:lambdaplusstaygood} of
Proposition \ref{prop:goodset}.
Additionally, we define $\bar{\Lambda}$ to be the largest connected subset of $\Gamma_\eps \cap \halfball$
containing $\Lambda$; this will be the domain in which all of the relevant dynamics happens, with high probability.

\begin{lemma}
\label{lem:Tchoice}
For $T = n^3$, there are constants $C, c > 0$ such that
\begin{equation}
\label{eq:dtvbound_verified}
    \dtv(\delta_x P^T, \mu) \leq e^{-c n} 
\end{equation}
for all $x \in \Lambda_+$, and
\begin{equation}
\label{eq:contraction_verified}
    \E \left[ \dh(X_t^x, X_t^{x'}) \right] \leq C \left(1 - \frac{\kappa}{n^2} \right)^t
\end{equation}
for all $x \in \Lambda$ and $x'$ such that $\dh(x,x') = 1$ and for all $t < T$.
\end{lemma}

\begin{proof}
Let us verify \eqref{eq:dtvbound_verified} first.
Let $(X_t^x)$ and $(X_t)$ be $\mu$-Glauber dynamics, i.e.\ the ERGM Glauber dynamics conditioned on $\ball$, with $X_0^x = x$
and $X_0 \sim \mu$.
Couple the steps the two chains via the monotone coupling.
Then we have
\begin{equation}
    \dtv(\delta_x P^T, \mu) \leq \P[X_T^x \neq X_T],
\end{equation}
since $X_T \sim \mu$ as well.
By property \ref{property:lambdaplusstaygood} of $\Lambda$, since $T = n^3 \ll e^{\alpha n}$,
we have $X_t^x \in \bar{\Lambda}$ for all $t \leq T$
with probability at least $1 - e^{-\Omega(n)}$, since trivially $X_t^x$ remains in the connected component of
$\Gamma_\eps \cap \halfball$ containing $\Lambda$.
By properties \ref{property:lambdalarge} and \ref{property:lambdaplusstaygood}, we also have
$X_t \in \bar{\Lambda}$ for all $t \leq T$ with probability at least $1 - e^{-\Omega(n)}$.
Let $\cB$ denote the bad event that one of the above conditions fail, i.e.\ some $X_t^x$ or $X_t$ is not in $\bar{\Lambda}$
for some $t \in [0,T]$.
Then $\P[\cB] \leq e^{-\Omega(n)}$.

Now, if $X_t^x, X_t \in \bar{\Lambda}$, since $\bar{\Lambda} \sse \Gamma_\eps \cap \halfball$, we have contraction, although
not necessarily in $\dh$.
Instead, we have contraction in $\dlb$ by a path coupling argument applied with Lemma \ref{lem:contractgood}
as the unit-distance contraction input, and obtain
\begin{equation}
    \Econd{\dlb(X_{t+1}^x, X_{t+1})}{X_t^x, X_t} \leq \left(1 - \fr{\kappa}{n^2}\right) \dlb(X_t^x, X_t),
\end{equation}
for some $\kappa > 0$.
Therefore, since $\dh \leq \dlb$, we have
\begin{align}
    \P[X_T^x \neq X_T] &\leq \Pcond{X_T^x \neq X_T}{\cB^\c} + \P[\cB] \\
    &\leq \Econd{\dh(X_T^x, X_T)}{\cB^\c} + e^{-\Omega(n)} \\
    &\leq \Econd{\dlb(X_T^x,X_T)}{\cB^\c} + e^{-\Omega(n)} \\ 
    &\leq \left(1 - \fr{\kappa}{n^2} \right)^T \max\{ \dlb(x,x') : x,x' \in \Lambda_+\} + e^{-\Omega(n)} \\
    &\leq e^{- \kappa T / n^2} \cdot (2 n^2 + 2) + e^{-\Omega(n)} \\
    &\leq e^{- c n}
\end{align}
for some $c > 0$, substituting $T = n^3$ in the last inequality and
using property \ref{property:lambdadiambound} in the second-to-last inequality,
since $\dlb(x,x') \leq \dl(x,x')$ for $x,x' \in \Lambda$.
This completes the verification of \eqref{eq:dtvbound_verified}.

Next we turn to \eqref{eq:contraction_verified}.
Let $(X_t^x), (X_t^{x'})$ be $\mu$-Glauber dynamics started at $X_0^x = x \in \Lambda$ and $X_0^{x'} = x'$ with $\dh(x,x') = 1$,
again coupled via the monotone coupling.
Define the bad event $\cB$ as before, that one of the chains leaves $\bar{\Lambda}$ by time $T$.
Then $\P[\cB] \leq e^{-\Omega(n)}$ as before.
Since either $x \preceq x'$ or $x' \preceq x$, whichever ordering holds at the start persists for all time
$t \in [0,T]$ as long as $\cB^\c$ holds.
In this case, we have $\dh(X_t^x, X_t^{x'}) = \dlb(X_t^x, X_t^{x'})$ for all $t \in [0,T]$ since $\bar{\Lambda}$ is an interval,
and so contraction in $\dlb$ implies the same contraction for $\dh$.
Therefore we obtain
\begin{align}
    \E\left[\dh(X_t^x,X_t^{x'})\right] &\leq \Econd{\dh(X_t^x,X_t^{x'})}{\cB^\c} + n^2 \P[\cB] \\
    &\leq \left(1 - \fr{\kappa}{n^2} \right)^t + e^{-\Omega(n)}
\end{align}
which validates \eqref{eq:contraction_verified} upon changing the constants, since $t$ is at most $n^3$.
This finishes the proof.
\end{proof}

Since conditions \eqref{eq:dtvbound_verified} and \eqref{eq:contraction_verified} are exactly the same as conditions
\eqref{eq:dtvbound} and \eqref{eq:contraction} with $T = n^3$, Lemma \ref{lem:Tchoice} finishes the proof of
Theorem \ref{thm:lipschitzconcentration}.

We remark that in fact the above proof works with any $T < e^{\alpha n}$, where $\alpha$ is defined as in
property \ref{property:lambdaplusstaygood} of $\Lambda$ in Proposition \ref{prop:goodset}, but there is no real
benefit from choosing a different $T$ larger than $n^3$; we could decrease the value of $\delta$
we could take in applying Theorem \ref{thm:chatterjee} and Corollary \ref{cor:barbour}, but the external $e^{-cn}$ term
in Theorem \ref{thm:lipschitzconcentration}, which arises as the
probability under $\mu$ of the complement of the good set $\Lambda$,
cannot be made smaller than $e^{-O(n)}$ due to \cite[Theorem 3.3]{bresler2024metastable}.

Finally, we remark that a precise understanding of the number $\kappa$ appearing in Lemma \ref{lem:contractgood}
would allow us to understand on a similar level of precision the leading-order term in the exponent
in the result of Theorem \ref{thm:lipschitzconcentration}.
However, we do not pursue this here.
\section{Applications}
\label{sec:applications}

In this section we state some applications of Theorem \ref{thm:lipschitzconcentration} to the typical behavior
of the supercritical ERGM, extending results from \cite{ganguly2024sub} in the subcritical regime.
For the most part, essentially the same proofs from the subcritical regime go through, with the relevant concentration inputs
replaced by Theorem \ref{thm:lipschitzconcentration}, and so we will not provide complete proofs but rather give a high-level
overview of the strategies.

One notable exception is that the results of \cite{ganguly2024sub} rely on the FKG inequality, combined with their
concentration result, in order to obtain bounds on the covariance between individual edge variables under the ERGM measure,
as well as estimates of multilinear moments of these variables.
While the FKG inequality does still hold in the supercritical regime for the \emph{entire} ERGM measure, it does not hold for
the measure conditioned on the ball $\ball$.
In the case where $|M_{\bm\beta}| = 1$, the conditioned measure is very close to the full measure by Theorem \ref{thm:LDP},
and we can simply apply the FKG inequality of the full measure.

However, when there is phase coexistence, for each $p^* \in U_{\bm\beta}$ the measure conditioned on $\ball$ might be quite
different from the full measure, being only one of multiple pieces which together form a measure close to the full measure.
In this case, we must find ways around the FKG inequality, and in the bound on the Wasserstein distance to $\cG(n,p^*)$
(Theorem \ref{thm:wasserstein_informal}), this requires us to restrict to a special case of the ERGM measure where all graphs
in the specification are trees.
The central limit theorem for sparse collections of edges (Theorem \ref{thm:clt_informal}) remains unaffected,
as we are able to completely avoid the FKG inequality for the required estimates there.
In fact, we prove a slightly more broad statement, which also extends the work of \cite{ganguly2024sub} within the subcritical regime.

In Section \ref{sec:applications_cov} below, we provide the aforementioned covariance and multilinear moment estimates.
In Section \ref{sec:applications_wasserstein} we state and prove a formal version of Theorem \ref{thm:wasserstein_informal},
and in Section \ref{sec:applications_clt} we do the same for Theorem \ref{thm:clt_informal}.
Throughout this section, we fix $p^* \in U_{\bm\beta}$, as well as some small enough fixed $\eta > 0$, and let
$\mu$ denote the ERGM measure conditioned on $\allball$.

\subsection{Covariance and multilinear moment estimates}
\label{sec:applications_cov}

In this section we provide two lemmas which will be useful for both of Theorems \ref{thm:wasserstein_informal} and
\ref{thm:clt_informal}, namely an upper bound on the covariance between any pair of possible edges, as well as an
extension of this bound to certain higher multilinear moments of edge variables.
These results are the analogues, for the supercritical regime, of \cite[Lemma 6.1]{ganguly2024sub}.

As mentioned above, in that work, a key input was the FKG inequality which holds for the full ERGM measure.
This implied the nonnegativity of the covariances, allowing for an easier proof of their bound by decomposing the variance of the
total edge count.
In addition, an inequality due to \cite{newman1980normal, bulinski1998asymptotical},
which follows from the FKG inequality, was used to bound the higher multilinear moments in terms of the covariances themselves.

For the following covariance bound, we are able to bypass this FKG input entirely and obtain an equivalent bound on the
\emph{absolute value} of the covariances, although we cannot prove that the covariances are nonnegative.

\begin{lemma}
\label{lem:cov}
For any distinct $e, e' \in \binom{[n]}{2}$, we have
\begin{equation}
	\left| \Cov_\mu[X(e), X(e')] \right| \lesssim \fr{1}{n}.
\end{equation}
Moreover, if $e$ and $e'$ do not share a vertex, then the right-hand side
above can be improved to $\fr{1}{n^2}$.
\end{lemma}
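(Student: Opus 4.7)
The plan is to exploit the vertex-permutation symmetry of $\mu$: both the ERGM Hamiltonian and the cut-distance ball around $W_{p^*}$ are invariant under relabelings of vertices, so the marginal variance $\Var_\mu(X(e))$ is the same for every edge and the covariance $\Cov_\mu(X(e),X(e'))$ depends only on whether $e$ and $e'$ share a vertex. Denote the two common covariance values by $c_{\mathrm{sh}}$ and $c_{\mathrm{dj}}$. Both can then be isolated through variance identities for natural linear observables, which is what allows us to sidestep the FKG route of \cite[Lemma 6.1]{ganguly2024sub}: since only magnitudes of variances enter, we never need the individual covariance signs, so phase coexistence causes no issue.

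For $c_{\mathrm{sh}}$, I would use the local degree $D_u(X) = \sum_{w \neq u} X(\{u,w\})$, which is Lipschitz with $\ell^1$-norm $n-1$ and $\ell^\infty$-norm $1$. The symmetry identity
\begin{equation}
    \Var_\mu(D_u) = (n-1)\Var_\mu(X(e)) + (n-1)(n-2)\, c_{\mathrm{sh}}
\end{equation}
shows that $\Var_\mu(D_u) \lesssim n$ implies $|c_{\mathrm{sh}}| \lesssim 1/n$, since $\Var_\mu(X(e)) \leq 1/4$ trivially. Theorem \ref{thm:lipschitzconcentration} alone does not immediately suffice for this variance bound: the exponential correction $e^{c_3 \lambda - c_4 n}$ breaks the Gaussian tail once $\lambda \gtrsim n$, but $|D_u - \E_\mu D_u|$ can be of order $n$ deterministically, so tail control is needed throughout that range. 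The fix is to adapt the union-bound strategy of Section \ref{sec:lipschitz_unionbound} to $D_u$: partition the $n-1$ potential edges incident to $u$ into a constant number of equally sized groups, apply Theorem \ref{thm:lipschitzconcentration} to each partial sum (on which the exponential correction is harmless), and union-bound to obtain Gaussian concentration of $D_u$ at scale $\sqrt n$ uniformly up to $\lambda$ of order $n$. Integrating this tail against the deterministic bound $|D_u - \E_\mu D_u| \leq n$ then yields $\Var_\mu(D_u) \lesssim n$.

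For $c_{\mathrm{dj}}$, I would repeat the manipulation with the global edge count $E(X)$ in place of $D_u$, using Proposition \ref{prop:variance} as the variance input:
\begin{equation}
    \Var_\mu(E) = \binom{n}{2}\Var_\mu(X(e)) + 2 n \binom{n-1}{2}\, c_{\mathrm{sh}} + 2 D_n \, c_{\mathrm{dj}},
\end{equation}
where $D_n = \Theta(n^4)$ is the number of unordered pairs of disjoint edges. Every term other than $2 D_n c_{\mathrm{dj}}$ is $O(n^2)$: $\Var_\mu(E)$ by Proposition \ref{prop:variance}, the marginal-variance term trivially, and the sharing term via the previous step's bound $|c_{\mathrm{sh}}| \lesssim 1/n$. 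Rearranging then gives $|c_{\mathrm{dj}}| \lesssim 1/n^2$.

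The main technical step, and essentially the only one beyond symmetry and linear algebra, is the variance bound $\Var_\mu(D_u) \lesssim n$ in the presence of Theorem \ref{thm:lipschitzconcentration}'s exponential correction term. This is exactly the difficulty that the union-bound argument of Section \ref{sec:lipschitz_unionbound} was designed to overcome for $E(X)$, and the same adaptation works for $D_u$.
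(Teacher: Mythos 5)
Your proposal reproduces the paper's argument in full: the same vertex-permutation symmetry reduction to two unknown covariances $c_{\mathrm{sh}}$ and $c_{\mathrm{dj}}$, the same star decomposition yielding $\Var_\mu(D_u) = (n-1)\Var_\mu(X(e)) + (n-1)(n-2)\,c_{\mathrm{sh}}$, the same global edge-count decomposition, and the same appeal to Proposition \ref{prop:variance} for the latter. The one place you add something is the claim that a union-bound patch is required to extract $\Var_\mu(D_u) \lesssim n$ from Theorem \ref{thm:lipschitzconcentration}; that step is in fact unnecessary. With $\|v\|_1 = n-1$ and $\|v\|_\infty = 1$, apply the theorem at $\lambda = cn$ for a fixed $c \in \bigl(0, \min\{c_1, c_4/c_3\}\bigr)$: the correction $e^{c_3\lambda - c_4 n}$ is then $e^{-\Omega(n)}$, and the bound gives $\mu[|D_u - \E_\mu D_u| > cn] \lesssim e^{-c_2 c^2 n} + e^{-c_5 n} = e^{-\Omega(n)}$. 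Integrating the Gaussian tail on $[0,cn]$ gives $\lesssim n$, and the remainder of the variance integral is bounded by the deterministic $(n-1)^2$ times this exponentially small probability, hence $o(1)$. The union-bound device of Section \ref{sec:lipschitz_unionbound} becomes essential only when $\|v\|_1 = \Theta(n^2)$, because there the tail probability at $\lambda = \Theta(n)$ is only constant-order rather than exponentially small; this is precisely why the paper routes the global edge count through Proposition \ref{prop:variance} but applies Theorem \ref{thm:lipschitzconcentration} directly to the star.
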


\begin{proof}[Proof of Lemma \ref{lem:cov}]
First, consider $e_1, \dotsc, e_{n-1}$ which are all of the edges incident on a particular vertex, i.e.\ these
edges form a maximal star in the complete graph.    
Then by Theorem \ref{thm:lipschitzconcentration}, we have
\begin{equation}
    \Var_\mu\left[\sum_{i=1}^{n-1} X(e_i)\right] \lesssim n.
\end{equation}
Now let us expand the variance as follows:
\begin{align}
    \Var_\mu\left[ \sum_{i=1}^{n-1} X(e_i) \right]
    &= \sum_{i=1}^{n-1} \Var_\mu[X(e_i)] + \sum_{i \neq j} \Cov_\mu[X(e_i),X(e_j)] \\
    &= (n-1) \cdot \Var_\mu[X(e)] + (n-1)(n-2) \cdot \Cov_\mu[X(e),X(e')],
\end{align}
using symmetry in the second step, where $e$ is any edge and $e'$ is any edge sharing a vertex with $e$.
The above implies that
\begin{equation}
\label{eq:cov_share}
    \left| \Cov_\mu[X(e),X(e')] \right| \lesssim \fr{1}{n}
\end{equation}
whenever $e$ and $e'$ share a vertex.
Now consider the sum of \emph{all} edge variables $X(e_1) + \dotsb + X(e_N)$, where $N = \binom{n}{2}$.
Now using the first bound in Theorem \ref{thm:lipschitzconcentration}, we have
\begin{equation}
    \Var_\mu\left[\sum_{i=1}^N X(e_i)\right] \lesssim n^2,
\end{equation}
and by expanding this as before we obtain
\begin{equation}
    \Var_\mu\left[ \sum_{i=1}^N X(e_i) \right]
    = N \cdot \Var_\mu[X(e)] + N (n-1) \cdot \Cov_\mu[X(e),X(e')]
    + N(N-n+1) \cdot \Cov_\mu[X(e),X(e'')],
\end{equation}
where $e$ and $e'$ share a vertex, and $e$ and $e''$ do not.
Using \eqref{eq:cov_share} to bound the second term, this means that
\begin{equation}
\label{eq:cov_noshare}
    \left|\Cov_\mu[X(e), X(e'')]\right| \lesssim \fr{1}{n^2}
\end{equation}
whenever $e$ and $e''$ do not share a vertex.
\end{proof}

We now extend this bound to certain higher multilinear moments of the edge variables.
For this, we cannot completely bypass the FKG inequality in all cases, due to the somewhat more complicated structure
of the expressions that arise, which depend on the type of graph that the edges span.
In the phase uniqueness case $|M_{\bm\beta}| = 1$, we can simply use the FKG inequality for the unconditioned measure,
but in the phase coexistence case, we must restrict collections of edges which form a forest in $K_n$, which allows for a
convenient decomposition argument similar in spirit to the one used for Lemma \ref{lem:cov}.

\begin{proposition}
\label{prop:Eproduct}
For a fixed $k$, let $e_1, \dotsc, e_k \in \binom{[n]}{2}$ be distinct potential edges and suppose that either of the
following two conditions hold:
\begin{enumerate}[label=(\alph*)]
	\item The maximizer $p^* \in U_{\bm\beta}$ is the unique global maximizer of $L_{\bm\beta}$, i.e.\ $|M_{\bm\beta}| = 1$.
	\item The edges $e_1, \dotsc, e_k$ form a forest in $K_n$.
\end{enumerate}
Then we have
\begin{equation}
	\left| \E_\mu\left[ \prod_{j=1}^k X(e_j) \right]
	- \E_\mu[X(e)]^k \right| \lesssim \fr{1}{n},
\end{equation}
where $e$ is an arbitrary edge in $\binom{[n]}{2}$.
\end{proposition}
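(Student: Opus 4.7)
The plan is to decompose the difference $\E_\mu[\prod X(e_j)] - p^k$ into centered multilinear moments and bound each one individually. Set $p := \E_\mu[X(e)]$, which is independent of $e$ by vertex-transitivity of $\mu$. Expanding $X(e_j) = p + (X(e_j)-p)$ yields
\[
\E_\mu\!\left[\prod_{j=1}^k X(e_j)\right] - p^k = \sum_{\substack{S \sse [k] \\ |S| \geq 2}} p^{k-|S|}\,\mu_S, \qquad \mu_S := \E_\mu\!\left[\prod_{j \in S}(X(e_j)-p)\right],
\]
the $|S|=1$ terms vanishing automatically. Since $k$ is fixed, it suffices to show $|\mu_S| \lesssim 1/n$ for each $|S| \geq 2$. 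The $|S|=2$ case is Lemma \ref{lem:cov}, and under hypothesis (b) the subcollection $\{e_j : j \in S\}$ is itself a forest, so the structural hypothesis survives the reduction.

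For case (a) with $|M_{\bm\beta}|=1$, Theorem \ref{thm:LDP} gives $\dtv(\mu,\tilde\mu) \leq e^{-\Omega(n^2)}$ where $\tilde\mu$ denotes the full (unconditioned) ERGM measure. Since $\beta_i \geq 0$ for $i \geq 1$, the Hamiltonian is coordinatewise monotone and $\tilde\mu$ satisfies FKG. The subcritical argument of \cite{ganguly2024sub} then applies verbatim: Newman's inequality from \cite{newman1980normal} bounds $|\mu_S|$ under $\tilde\mu$ by a sum of pairwise covariances, each of which is $O(1/n)$ by Lemma \ref{lem:cov}, and the exponentially small total-variation gap between $\mu$ and $\tilde\mu$ is utterly negligible for uniformly bounded observables.

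For case (b), where the edges of $S$ form a forest, FKG is unavailable in phase coexistence. The replacement uses the graph-theoretic fact that any forest on $|S| \geq 2$ edges contains two distinct leaf edges $e_a = \{v_1,w_1\}$ and $e_b = \{v_2,w_2\}$ with distinct leaf vertices $v_1 \neq v_2$. Setting $A' := \prod_{j \in S \setminus \{a,b\}}(X(e_j)-p)$, $V' := V(\{e_j : j \in S \setminus \{a,b\}\})$, and $Z_w := \sum_{v \in [n] \setminus (V' \cup \{w_1,w_2\})}(X(\{v,w\})-p)$, vertex-transitivity of $\mu$ in the labels $v_1,v_2$ gives
\[
(n - O(1))(n - O(1) - 1)\,\mu_S = \E_\mu\!\left[A'\bigl(Z_{w_1} Z_{w_2} - D\bigr)\right],
\]
where $D$ is the diagonal correction for the constraint $v_1 \neq v_2$ and satisfies $|D| \lesssim n$ deterministically (the case $w_1 = w_2$ is handled identically with $Z_{w_1}Z_{w_2} = Z_w^2$). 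Bounding $|A'| \leq 1$ pointwise, $\E_\mu|Z_{w_1}Z_{w_2}| \leq \sqrt{\Var(Z_{w_1})\Var(Z_{w_2})} \lesssim n$ via the variance bound from Theorem \ref{thm:lipschitzconcentration} applied to each local edge sum, and $\E_\mu|D| \lesssim n$, the right-hand side is $O(n)$, so $|\mu_S| \lesssim 1/n$ as required.

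The main obstacle will be precisely the forest case (b): a naive single-leaf summation combined with Cauchy--Schwarz would yield only $|\mu_S| \lesssim n^{-1/2}$, falling short by a factor of $\sqrt{n}$. The crucial move is the two-leaf trick, which gains an extra factor of $n$ on the left by summing independently over two distinct leaf vertices, while keeping the right side at $O(n)$ through the Cauchy--Schwarz factorization of $Z_{w_1}Z_{w_2}$. This is exactly where the forest hypothesis is essential: a configuration with a cycle would not in general admit two independent leaf vertices to integrate against, and this is also the reason Theorem \ref{thm:wasserstein_informal} imposes a forest condition on the specifying graphs in the phase-coexistence regime.
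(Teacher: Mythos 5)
The proposal is correct, and for case (b) it takes a genuinely different route from the paper. Case (a) is handled essentially the same way in both (pass to the unconditioned measure via the $e^{-\Omega(n^2)}$ total variation gap from Theorem \ref{thm:LDP}, apply FKG and Newman's inequality, transfer the covariance bound of Lemma \ref{lem:cov}). For case (b), the paper first reduces to covariances of the form $\Cov_\mu[X(e_1) \dotsb X(e_j), X(e_{j+1})]$ via a telescoping sum and then bounds each one by constructing two auxiliary Lipschitz sums $S$ and $S^+$ built from $\Omega(n)$ disjoint or single-vertex-glued copies of the relevant partial forest, expanding their variances by symmetry, and comparing. You instead expand directly into centered multilinear moments $\mu_S$ and bound each via a two-leaf averaging argument: a forest on $|S| \geq 2$ edges has two distinct leaves, each of which can be slid to $n - O(1)$ positions without changing the isomorphism type, so by vertex-transitivity of $\mu$ the left side acquires a factor of order $n^2$, while the Cauchy--Schwarz factorization of the two leaf sums on the right is controlled by the $O(n)$ variance bound for local edge sums supplied by Theorem \ref{thm:lipschitzconcentration}. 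Both arguments exploit the same structural leverage --- the two degrees of freedom afforded by two leaves, which a cycle would not provide --- and both rest on the same variance input for local Lipschitz observables; your version is somewhat more direct, sidestepping the paper's careful construction of the auxiliary sums (in particular the single-vertex gluing needed when the forest is not vertex-disjoint), at the cost of a slightly heavier decomposition over subsets $S$.
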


\begin{proof}[Proof of Proposition \ref{prop:Eproduct}]
We consider each case separately.
\begin{enumerate}[label=(\alph*)]
\item
Since the total variation distance between $\mu$ and the full ERGM measure is $e^{-\Omega(n^2)}$ in this case by Theorem \ref{thm:LDP},
it suffices to prove the result for the full ERGM measure, where the FKG inequality holds by the nonnegativity of all $\beta_i$ for $i \geq 1$.
Following \cite{ganguly2024sub}, we invoke \cite[Equation (12)]{newman1980normal}, which posits the existence of a constant $C$ such that
whenever random variables $(Z_1, \dotsc, Z_m)$ satisfy the FKG inequality, for any $C^1$ functions $f, g : \R^m \to \R$, we have
\begin{equation}
\label{eq:newman}
	\Cov[f(Z_1,\dotsc,Z_m), g(Z_1,\dotsc,Z_m)] \leq C \sum_{i=1}^m \sum_{j=1}^m
	\left\| \fr{\partial f}{\partial z_i} \right\|_\infty \left\| \fr{\partial g}{\partial z_j} \right\|_\infty \Cov[Z_i, Z_j].
\end{equation}
See also \cite{bulinski1998asymptotical,bulinski2007limit} for a proof of this fact which shows that we may take $C = 1$.
In our setting since $|U_{\bm\beta}| = 1$, Lemma \ref{lem:cov} implies that each covariance in the
\emph{unconditioned} measure is also $\lesssim \fr{1}{n}$.
So using \eqref{eq:newman} for the unconditioned measure and then restricting to the conditioned measure, we obtain
\begin{equation}
\label{eq:covcondition}
	\left| \Cov_\mu[X(e_1) \dotsb X(e_j), X(e_{j+1})] \right| \lesssim \fr{1}{n}
\end{equation}
for each $j = 1, \dotsc, k-1$, since the range of each $X(e_i)$ is $\{0,1\}$ which allows us to appropriately modify the multilinear function
$X(e_1) \dotsb X(e_j)$ away from $[0,1]^j$ in order that its partial derivatives are all bounded by some constant, say $2$.
Then we obtain the telescoping sum
\begin{align}
	\left| \E_\mu\left[ \prod_{j=1}^k X(e_j) \right]
	- \E_\mu[X(e)]^k \right| &= \left| \sum_{j=1}^{k-1}
	\Cov_\mu[X(e_1) \dotsb X(e_j), X(e_{j+1})] \cdot
	\E_\mu[X(e)]^{k-1-j} \right| \\
	&\leq \sum_{j=1}^{k-1} \left| \Cov_\mu[X(e_1) \dotsb X(e_j), X(e_{j+1})] \right| \lesssim \fr{1}{n},
\end{align}
since $\E_\mu[X_e] \in [0,1]$.

\item
By the same telescoping sum as above, it suffices to prove condition \eqref{eq:covcondition} for each $j = 1, \dotsc, k-1$.
Let us first consider the case where the edges $e_1, \dotsc, e_k$ are vertex-disjoint, since it is conceptually simpler.

Consider the sum
\begin{equation}
	S(X) = \sum_{i=1}^m X(e_1^i) \dotsb X(e_j^i),
\end{equation}
where $\{ e_\ell^i : 1 \leq i \leq m, 1 \leq \ell \leq j \}$ is some enumeration of a collection of pairwise vertex-disjoint edges.
We can find exactly $\lfloor \fr{n}{2} \rfloor$ such edges, and so since $j$ is fixed,
we can ensure that the number $m$ of summands in $S$ is $\gtrsim n$.
Since $S(X)$ is $v$-Lipschitz with $\| v \|_\infty = 1$ and $\| v \|_1 \lesssim n$, we have
\begin{equation}
	\Var_\mu[S(X)] \lesssim n
\end{equation}
by Theorem \ref{thm:lipschitzconcentration}.
Now splitting up the variance as in the proof of Lemma \ref{lem:cov}, we find by symmetry that
\begin{equation}
	\Var_\mu[S(X)] = m \cdot \Var_\mu\left[X(e_1^1) \dotsb X(e_j^1)\right]
	+ m(m-1) \Cov_\mu\left[X(e_1^1) \dotsb X(e_j^1), X(e_1^2) \dotsb X(e_j^2)\right].
\end{equation}
Thus, since $m = \Omega(n)$ and $\Var_\mu\left[X(e_1^1) \dotsb X(e_j^1)\right] \leq 1$, we must have
\begin{equation}
\label{eq:covdoublebound}
	\left| \Cov_\mu\left[X(e_1^1) \dotsb X(e_j^1), X(e_1^2) \dotsb X(e_j^2)\right] \right| \lesssim \fr{1}{n}.
\end{equation}
Now let us consider the sum
\begin{equation}
	S^+(X) = \sum_{i=1}^m X(e_1^i) \dotsb X(e_j^i)
	+ \sum_{i=1}^m X(e_{j+1}^i),
\end{equation}
where $\{ e_\ell^i : 1 \leq i \leq m, 1 \leq \ell \leq j+1 \}$ is another collection of pairwise vertex-disjoint edges.
We can ensure that $m \gtrsim n$ as before, and as before
$S^+(X)$ is $v$-Lipschitz with $\| v \|_\infty = 1$ and $\| v \|_1 \lesssim n$, so
\begin{equation}
	\Var_\mu[S^+(X)] \lesssim n
\end{equation}
as well by Theorem \ref{thm:lipschitzconcentration}.
Now expanding the variance as before and using symmetry, we obtain
\begin{align}
	\Var_\mu[S^+(X)] &= m \Var_\mu\left[X(e_1^1) \dotsb X(e_j^1)\right]
	+ m \Var_\mu\left[ X(e_{j+1}^1) \right] \\
	&\qquad + m(m-1) \Cov_\mu\left[X(e_1^1) \dotsb X(e_j^1), X(e_1^2) \dotsb X(e_j^2)\right] \\
	&\qquad + m(m-1) \Cov_\mu\left[ X(e_{j+1}^1), X(e_{j+1}^2) \right] \\
	&\qquad + m^2 \Cov_\mu\left[ X(e_1^1) \dotsb X(e_j^1), X(e_{j+1}^1) \right].
\end{align}
Now by \eqref{eq:covdoublebound} and Lemma \ref{lem:cov}, all terms on the right-hand side, other than the last one, are $\lesssim n$,
and so we must also have
\begin{equation}
\label{eq:covprod}
	\left| \Cov_\mu\left[ X(e_1^1) \dotsb X(e_j^1), X(e_{j+1}^1) \right] \right| \lesssim \fr{1}{n},
\end{equation}
which is the same as \eqref{eq:covcondition}, finishing the proof in the case where all edges are vertex-disjoint.

For the general case where the edges form a forest but are not all vertex-disjoint, we first need to reorder the edges
so that when thinking of the forest ``growing'' by adding each edge in sequence, each connected component of the forest is
established by a single edge before any components grow, and then the components grow from the leaves only.

More precisely we require that for each $j=1,\dotsc,k-1$, the edge $e_{j+1}$ is a leaf in the forest spanned by
$\{e_1, \dotsc, e_{j+1}\}$.
Let us also ensure that there is some $j_0$ for which $e_{j+1}$ is not isolated in the forest spanned by $\{e_1,\dotsc,e_{j+1}\}$,
whenever $j \geq j_0$, and for which the edges $\{e_1, \dotsc, e_{j+1}\}$ are vertex-disjoint when $j < j_0$.

Then the cases $j < j_0$ follow exactly as above, and we only need to consider the cases $j \geq j_0$.
Here the proof also follows from essentially the same analysis as above,
but we need to construct the sums $S(X)$ and $S^+(X)$ a bit more carefully.

Let $F$ be the forest spanned by $\{e_1, \dotsc, e_j\}$, and let $F^+$ be the forest spanned by $\{e_1, \dotsc, e_{j+1}\}$
Let $u \in F$ be the vertex where $e_{j+1}$ would be attached to form $F^+$, and let $R$ be the set of indices $\ell \in \{1,\dotsc,j+1\}$
for which $e_\ell$ is incident on $u$.

Now for the first sum we will take
\begin{equation}
	S(X) = \sum_{i=1}^m X(e_1^i) \dotsb X(e_j^i),
\end{equation}
where for each fixed $i$ the edges $e_1^i, \dotsc, e_j^i$ span a forest in $K_n$ of the same type as $F$, and all
$e_\ell^i$ with $\ell \in R$ (across all $i$) are incident on a single vertex, with no other pairs of edges $e_\ell^i$
and $e_{\ell'}^{i'}$ sharing a vertex if $i \neq i'$ and $\ell$ or $\ell'$ is not in $R$ (see Figure \ref{fig:trees}, left).
By the same analysis as in the previous case, we can obtain \eqref{eq:covdoublebound}, since each vertex has $n-1$ edges incident
on it in $K_n$ and so we can take $m \gtrsim n$.

\begin{figure}
\begin{center}
	\resizebox{0.4\textwidth}{!}{\tikzfig{figures/trees/alltrees}}
	\hspace{10mm}
	\resizebox{0.4\textwidth}{!}{\tikzfig{figures/trees/treesandedges}}
\end{center}
\caption{The constructions of the sums $S(X)$ (left) and $S^+(X)$ (right) for proof of part (b) of Proposition \ref{prop:Eproduct}, in the
case where the edges span a forest but are not all vertex-disjoint.
Different colors represent different summands in the sums.}
\label{fig:trees}
\end{figure}

As for the second sum, we will take
\begin{equation}
	S^+(X) = \sum_{i=1}^m X(e_1^i) \dotsb X(e_j^i) + \sum_{i=1}^m X(e_{j+1}^i),
\end{equation}
where for each fixed $i$, the edges $e_1^i, \dotsc, e_j^i$ span a forest in $K_n$ of type $F$, and all $e_\ell^i$
with $\ell \in R$ (including $\ell = j+1$) are incident on a single vertex, with no other pairs of edges sharing a vertex
(see Figure \ref{fig:trees}, right).
Again, we can ensure that $m \gtrsim n$, and so the same analysis goes through to derive \eqref{eq:covcondition},
which finishes the proof in the general case.
\qedhere
\end{enumerate}	
\end{proof}

We remark that the proof used in part (b) of Proposition \ref{prop:Eproduct} does not work when the graph spanned
by the edges $e_1, \dotsc, e_k$ contains a cycle.
Indeed, the above strategy relies on the fact that in the construction depicted in the right-hand side of
Figure \ref{fig:trees}, each pair of a partial forest and an edge leads to a forest of the correct type, and
there are $\Omega(n)$ partial forests and $\Omega(n)$ single edges, so there are $\Omega(n^2)$ different choices
for a completed forest.

If instead one tried to remove an edge that was part of a cycle, then for each partial graph, there
would only be \emph{one} possible edge that could complete the graph to the correct type, since both endpoints would
be determined uniquely by the partial graph.
It is therefore an interesting question whether the bound of Proposition \ref{prop:Eproduct} can be extended to all graphs,
or even just a graph with one cycle.
This can be proved using the FKG inequality input under assumption (a), but it seems plausible that the result should
remain true even if there is phase coexistence, i.e.\ $|M_{\bm\beta}| > 1$.

\begin{conjecture}
\label{conj:trianglemoment}
Suppose that the edges $e_1, e_2, e_3$ span a triangle in $K_n$.
Then we still have
\begin{equation}
	\left|
		\E_\mu[X(e_1) X(e_2) X(e_3)]
		- \E_\mu[X(e)]^3
	\right| \lesssim \fr{1}{n},
\end{equation}
where $e$ is an arbitrary edge, even in the phase coexistence regime.
\end{conjecture}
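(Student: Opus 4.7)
The plan is to follow the telescoping strategy of Proposition \ref{prop:Eproduct} adapted to a triangle. Setting $a = \E_\mu[X(e)]$ (which is constant in $e$ by symmetry), a two-step telescope gives
\[
\E_\mu[X(e_1) X(e_2) X(e_3)] - a^3 = \Cov_\mu[X(e_1) X(e_2),\, X(e_3)] + a \cdot \Cov_\mu[X(e_1),\, X(e_2)],
\]
and the second term is $\lesssim \fr{1}{n}$ by Lemma \ref{lem:cov}, so the problem reduces to establishing $|\Cov_\mu[X(e_1) X(e_2),\, X(e_3)]| \lesssim \fr{1}{n}$ whenever $(e_1, e_2, e_3)$ spans a triangle.

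For this I would first attempt a variance-extraction argument in the spirit of Proposition \ref{prop:Eproduct}(b). Let $(e_1^i, e_2^i, e_3^i)_{i=1}^m$ be $m \asymp n$ pairwise vertex-disjoint triangles in $K_n$ and set
\[
S^+(X) = \sum_{i=1}^m X(e_1^i) X(e_2^i) + \sum_{i=1}^m X(e_3^i).
\]
Then $S^+$ is $v$-Lipschitz with $\|v\|_\infty = 1$ and $\|v\|_1 \leq 3m \lesssim n$, so Theorem \ref{thm:lipschitzconcentration} gives $\Var_\mu[S^+] \lesssim n$. Expanding by bilinearity and using symmetry isolates the target covariance $\kappa = \Cov_\mu[X(e_1) X(e_2),\, X(e_3)]$ in the diagonal cross-term $2 m \kappa$. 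The remaining off-diagonal contributions---disjoint cherry with disjoint cherry, disjoint edge with disjoint edge, and disjoint cherry with disjoint edge---can each be shown to be of order $\lesssim \fr{1}{n}$ via auxiliary Lipschitz sums constructed in parallel to the proof of Lemma \ref{lem:cov}, and therefore sum to $O(n)$ in total.

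The main obstacle appears at this point. The diagonal variance contributions $\sum_i \Var_\mu[X(e_1^i) X(e_2^i)]$ and $\sum_i \Var_\mu[X(e_3^i)]$ are each of size $\Theta(m) \asymp n$, already saturating the Lipschitz upper bound $\Var_\mu[S^+] \lesssim n$. Consequently one can only conclude $|\kappa| \lesssim 1$, which is far weaker than the target $\lesssim \fr{1}{n}$. Increasing $m$ beyond $\Theta(n)$ forces the triangles to share vertices, which introduces new correlated contributions to $\Var_\mu[S^+]$ whose magnitudes are not controlled by the current tools, and working with signed linear combinations such as $\sum X(e_1^i) X(e_2^i) \pm \sum X(e_3^i)$ runs into the same obstruction.

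To get around this I would try a Stein-type identity exploiting stationarity of the Glauber dynamics. Conditioning on the configuration restricted to $\binom{[n]}{2} \setminus \{e_3\}$ gives, up to an $e^{-\Omega(n)}$ boundary correction from the cut-distance hard wall,
\[
\E_\mu[X(e_1) X(e_2) X(e_3)] = \E_\mu\left[X(e_1) X(e_2) \cdot \phi(\partial_{e_3} H(X))\right],
\]
and since $\phi(\Psi_{\bm\beta}(p^*)) = p^*$, Taylor expanding $\phi$ around $\Psi_{\bm\beta}(p^*)$ reduces matters to covariances of $X(e_1) X(e_2)$ with corrections of the form $r_{G_i}(X, e_3) - p^*$. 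Theorem \ref{thm:lipschitzconcentration} applied to the relevant local subgraph counts shows that each correction has typical size $O(n^{-1/2})$, so a naive Cauchy--Schwarz bound yields only $O(n^{-1/2})$. The hard part is exhibiting the further cancellation required to reach the $\fr{1}{n}$ scale, which appears to need either a local FKG-type inequality inside the metastable well (the FKG inequality for the unconditioned ERGM is not inherited by the conditional measure in the phase coexistence regime) or a sharper Erd\H{o}s--R\'enyi coupling than the Wasserstein bound of Theorem \ref{thm:wasserstein_informal}, whose forest hypothesis in any case excludes the triangle. Either of these would constitute a substantial new input.
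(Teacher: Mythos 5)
The statement you were asked to prove is labeled as a \emph{conjecture} in the paper, and the paper does not prove it; the discussion immediately preceding it explicitly explains why the argument used for Proposition \ref{prop:Eproduct}(b) breaks down for cycles, and the statement is left open. So there is no ``paper's proof'' against which to compare a completed argument, and any proposal claiming a full proof would, at minimum, be doing something the paper itself did not achieve.

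Your write-up is in fact not a proof but an honest and essentially correct analysis of why the natural approaches fail, and it matches the paper's own diagnosis closely. The paper's stated obstruction is that for a forest one can arrange $\Omega(n)$ partial forests and $\Omega(n)$ extra edges so that every cross pairing spans a forest of the right type, producing $\Omega(n^2)$ copies of the target covariance inside a sum whose variance is controlled at order $n$; but for a triangle, a cherry $\{e_1, e_2\}$ admits exactly one completing edge $e_3$, so only the $m$ diagonal pairings produce the target correlation. Your accounting of the cross term as appearing only $2m$ times, with $\Var_\mu[A]$ and $\Var_\mu[B]$ already saturating the Lipschitz bound $\Var_\mu[S^+] \lesssim n$ so that only $|\kappa| \lesssim 1$ can be extracted, is exactly this obstruction phrased from the other direction. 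Your further observation that pushing $m$ past $\Theta(n)$ forces shared vertices and uncontrolled correlations, and that signed combinations do not help, is also sound.

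The Stein-type idea---using stationarity to replace $X(e_3)$ by $\phi(\partial_{e_3} H(X))$, Taylor-expanding around $\Psi_{\bm\beta}(p^*)$, and trying to bound the covariances of $X(e_1)X(e_2)$ against the local-density corrections $r_{G_i}(X,e_3) - p^*$---is a sensible additional direction not explicitly discussed in the paper, and you correctly flag that a naive Cauchy--Schwarz only yields $O(n^{-1/2})$; the missing cancellation to reach $1/n$ would need some new input (a conditional FKG-type structure in the well or a sharper local coupling). In summary: your proposal does not prove the statement, but it correctly reduces it, correctly identifies the same bottleneck the paper identifies, and is transparent about what remains open, which is the appropriate state of affairs for a conjecture.
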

\subsection{Bounding the 1-Wasserstein distance to the Erd\H{o}s--R\'enyi graph}
\label{sec:applications_wasserstein}

In this section we give an upper bound on the $1$-Wasserstein distance between $\mu$ and the Erd\H{o}s--R\'enyi measure
$\cG(n,p^*)$.
This distance is defined for two probability measures $\mu, \pi$ on
$\{0,1\}^{\binom{[n]}{2}}$ as follows:
\begin{equation}
    W_1(\mu,\pi) \coloneqq \inf_{\substack{X \sim \mu \\ Y \sim \pi}} \sum_{e \in \binom{[n]}{2}} \P[X(e) \neq Y(e)],
\end{equation}
where the infimum is taken over all couplings of $\mu$ and $\pi$, instantiated by random variables $X \sim \mu$
and $Y \sim \pi$ on the same probability space, whose joint distribution is being optimized over.
The following theorem is the analog of the subcritical result \cite[Theorem 3]{ganguly2024sub}, although in
the phase coexistence case we must restrict to ERGMs specified by forests.
This result was informally stated in the introduction as Theorem \ref{thm:wasserstein_informal}.

\begin{theorem}
\label{thm:wasserstein}
Suppose that $|M_{\bm\beta}| = 1$, \emph{or} that each graph $G_0, \dotsc, G_K$ in the specification of the ERGM measure is a forest.
Let $p^* \in U_{\bm\beta}$ and let $\mu$ denote the ERGM measure conditioned on $B_\eta^\square(p^*)$,
for some small enough $\eta > 0$.
Additionally, let $\pi$ denote the Erd\H{o}s--R\'enyi measure $\cG(n,p^*)$.
Then we have
\begin{equation}
    W_1(\mu,\pi) \lesssim n^{3/2} \sqrt{\log n}.
\end{equation}
\end{theorem}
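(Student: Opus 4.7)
The strategy is to adapt the proof of Theorem 3 in \cite{ganguly2024sub} from the subcritical regime, replacing the subcritical concentration and FKG inputs with Theorem \ref{thm:lipschitzconcentration} and Proposition \ref{prop:Eproduct} respectively. The plan proceeds in three phases: construct a coupling, reduce the Wasserstein distance to a one-edge expectation, and bound that expectation.

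First, I will construct an explicit coupling between $X \sim \mu$ and $Y \sim \pi$ by running coupled Markov chains driven by a common source of randomness. At each step, select a uniform edge $e \in \binom{[n]}{2}$ and a uniform $U \in [0,1]$, and set $X_{t+1}(e) = \ind{U \leq \phi(\partial_e H(X_t))}$ and $Y_{t+1}(e) = \ind{U \leq p^*}$. The hard-wall constraint on $X$ at the boundary of $\ball$ can be handled as an error term, since by property \ref{property:lambdaplusstaygood} of Proposition \ref{prop:goodset} the chain remains in $\halfball$ with overwhelming probability, where a single-edge flip cannot exit $\ball$. Starting from $X_0 \sim \mu$ and $Y_0 \sim \pi$, both marginals remain stationary, and after the update at $e$ the two chains agree at $e$ with probability $1 - |\phi(\partial_e H(X_t)) - p^*|$. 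A standard drift computation then gives, with $D_t = \E[\dh(X_t,Y_t)]$ and $N = \binom{n}{2}$,
\begin{equation}
D_{t+1} = \left(1 - \tfrac{1}{N}\right) D_t + \tfrac{1}{N}\sum_e \E\left[|\phi(\partial_e H(X_t)) - p^*|\right],
\end{equation}
so that passing to stationarity and using vertex-symmetry yields $W_1(\mu,\pi) \leq D_\infty = N \cdot \E_\mu[|\phi(\partial_e H(X)) - p^*|]$.

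Second, I need to show $\E_\mu[|\phi(\partial_e H(X)) - p^*|] \lesssim n^{-1/2}\sqrt{\log n}$. Since $\phi$ is $\tfrac{1}{4}$-Lipschitz and $p^* = \phi(\Psi_{\bm\beta}(p^*))$, it suffices to control $\E_\mu[|\partial_e H(X) - \Psi_{\bm\beta}(p^*)|]$. I split this as a fluctuation term plus a mean-gap term. The fluctuation term is controlled by Theorem \ref{thm:lipschitzconcentration}: each normalized subgraph count $r_i(X,e) = N_{G_i}(X,e)/n^{|V_i|-2}$ is $v$-Lipschitz with $\|v\|_\infty = O(n^{-1})$ and $\|v\|_1 = O(1)$, giving Gaussian tails of width $O(n^{-1/2})$; a union bound over the $N$ edges to uniformly control the bad event is the source of the extra $\sqrt{\log n}$ factor. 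The mean-gap term is controlled via Proposition \ref{prop:Eproduct}: expanding $\E_\mu[N_{G_i}(X,e)]$ as a sum over embeddings gives $\E_\mu[\partial_e H(X)] = \Psi_{\bm\beta}(\E_\mu[X(e)]) + O(1/n)$, while the stationarity identity $\E_\mu[X(e)] = \E_\mu[\phi(\partial_e H(X))]$ combined with the strict attractiveness $\phi_{\bm\beta}'(p^*) < 1$ and the a priori closeness $\E_\mu[X(e)] = p^* + o(1)$ (from the cut-distance constraint) pins down $\E_\mu[X(e)] = p^* + o(1)$, so this term is negligible.

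The main obstacle is the application of Proposition \ref{prop:Eproduct}, which is why the theorem's hypotheses take the dichotomous form they do. When $|M_{\bm\beta}| = 1$, the conditioned measure $\mu$ is exponentially close in total variation to the unconditioned ERGM (Theorem \ref{thm:LDP}), where FKG holds, so Proposition \ref{prop:Eproduct}(a) applies. In the phase-coexistence regime, FKG fails for $\mu$ since it represents only one of several coexisting phases, and the forest hypothesis becomes essential: the variance-decomposition argument of Proposition \ref{prop:Eproduct}(b) crucially exploits the fact that each partial forest plus a leaf edge yields $\Omega(n^2)$ distinct completions, which fails when the edges in question span a cycle. Removing the forest hypothesis in the coexistence regime would require establishing Conjecture \ref{conj:trianglemoment} or devising an FKG-free argument for cycle-spanning edge collections, and as noted in the introduction, a more refined approach along the lines of \cite[Theorem 1.13]{reinert2019approximating} would also eliminate the $\sqrt{\log n}$ factor arising from the union bound above.
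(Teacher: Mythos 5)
Your proposal is correct and follows essentially the same strategy as the paper's proof: a synchronous Glauber-style coupling that reduces $W_1(\mu,\pi)$ to the single-edge quantity $\E_\mu\big[|\phi(\partial_e H(X)) - p^*|\big]$, which is then controlled via Theorem \ref{thm:lipschitzconcentration} for the fluctuations of $\partial_e H(X)$, Proposition \ref{prop:Eproduct} for the mean gap $\E_\mu[\partial_e H(X)] - \Psi_{\bm\beta}(\E_\mu[X(e)])$, and the contractivity of $\phi\circ\Psi_{\bm\beta}$ at $p^*$ to pin $\E_\mu[X(e)]$ to within $O(\sqrt{\log n/n})$ of $p^*$. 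The only cosmetic difference is that the paper uses a single systematic fixed-order Gibbs scan (so each edge is resampled exactly once and the per-edge disagreement probability can be read off directly), whereas you run a random-scan coupled chain and pass to the stationary drift limit $D_\infty$; both are valid, and the paper's choice merely sidesteps the need to argue convergence of $D_t$.
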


As mentioned, the proof is essentially the same as that of \cite[Theorem 3]{ganguly2024sub}, and so we only give a high-level
overview, highlighting especially the places which need to be modified for our present setting.

The proof proceeds via the exhibition of a coupling between the ERGM
measure and the Erd\H{o}s--R\'enyi measure.
To construct this coupling, simply sample $X \sim \mu$ and then resample each edge $e$, one by one, in some 
arbitrary fixed order on all edges, with its conditional distribution given the state of the rest of the graph.
After all edges are resampled, we obtain a new state $X'$, with $X' \sim \mu$ still.
Simultaneously, we sample $Z \sim \pi$ by choosing each edge $e$ to be present with probability $p^*$ independently,
but coupled to the choices of resamplings in $X$ such that each Bernoulli random variable at each step is coupled
optimally.
This is the same resampling procedure as in the proof of Proposition \ref{prop:goodset}, although there we considered
$\cG(n,p^*+\delta)$ instead of $\cG(n,p^*)$ as we do here.

A bound on the Wasserstein distance is then obtained by summing up the probabilities that $X(e) \neq Z(e)$ for each
edge $e$, the Wasserstein distance will thus be small if
\begin{equation}
    \left| \E_\mu[X(e)|X(e') \text{ for } e' \neq e] - p^* \right|
\end{equation}
is small with high probability.
Now recall from Section \ref{sec:review_dynamics} that
\begin{equation}
\label{eq:conditional_hardwall}
    \E_\mu[X(e)|X(e') \text{ for } e' \neq e] = \phi(\partial_e H(X))
\end{equation}
with probability at least $1 - e^{-\Omega(n)}$, since with this probability the configuration lies
well enough within $\ball$ that the conditional distribution at each edge is the same as for the full ERGM measure.

Since $\phi$ is a smooth function it thus suffices to show that $\partial_e H(X)$ is concentrated near a value
$q$ for which $\phi(q) = p^*$.
This is where Theorem \ref{thm:lipschitzconcentration} comes in, and to apply it we need to calculate the norms
of the Lipschitz vector of $\partial_e H(x)$.
First, we have
\begin{equation}
    \partial_e H(x) = \sum_{i=0}^K \beta_i \fr{N_{G_i}(x,e)}{n^{|V_i|-2}},
\end{equation}
and as can be easily calculated for any $e' \in \binom{[n]}{2}$ and any $x \in \Omega$, we have
\begin{equation}
    N_{G_i}(x^{+e'},e) - N_{G_i}(x^{-e'},e) \lesssim n^{|V_i|-3},
\end{equation}
since the left-hand side is the number of subgraphs of $x^{+\{e,e'\}}$ isomorphic to $G_i$ which use both $e$ and $e'$.
Moreover, if $e$ and $e'$ do not share a vertex, then the right-hand side above can be improved to $n^{|V_i|-4}$.
This means that in total, $\partial_e H(x)$ is $v$-Lipschitz with $\| v \|_\infty \lesssim \fr{1}{n}$ and
$\| v \|_1 \lesssim 1$.
So Theorem \ref{thm:lipschitzconcentration} gives
\begin{equation}
    \mu\left[\left|\partial_e H(X) - \E_\mu [\partial_e H(X)]\right| > \fr{\lambda}{n} \right]
    \leq 2 \Exp{- \fr{c_2 \lambda^2}{n} + e^{c_3 \lambda - c_4 n}} + e^{- c_5 n}
\end{equation}
for all $\lambda \leq c_1 n$.
In other words, taking $\lambda = \alpha \sqrt{n \log n}$, with probability at least $1 - O(n^{- c \alpha})$, we have
\begin{equation}
\label{eq:w1bound1}
    \left|\partial_e H(X) - \E_\mu[\partial_e H(X)]\right| \leq \alpha \sqrt{\fr{\log n}{n}}.
\end{equation}

Now we need to estimate $\E_\mu[\partial_e H(X)]$.
For this, we first expand the definition of $N_{G_i}(X,e)$ to see that
\begin{equation}
\label{eq:w1nsum}
    \E_\mu[N_{G_i}(X,e)] = \sum_{\substack{\sigma : V_i \to [n] \\ e = \{\sigma_u, \sigma_v\} \text{ for} \\ \text{some } \{u,v\} \in E_i}}
    \E_\mu\left[\prod_{\substack{\{u,v\} \in E_i \\ \{\sigma_u, \sigma_v\} \neq e}} X(\{\sigma_u, \sigma_v\})\right].
\end{equation}
In the case where $|U_{\bm\beta}| = 1$, we may invoke part (a) of Proposition \ref{prop:Eproduct} and conclude that the expectation inside
the sum differs from $\E_\mu[X_e]^{|E_i|-1}$ by $\lesssim \fr{1}{n}$.
Since there are $2 |E_i| n^{|V_i|-2}$ terms in the sum, this allows us to conclude that
\begin{equation}
\label{eq:w1bound2}
    \left| \E_\mu[\partial_e H(X)] - \Psi_{\bm\beta}(\E_\mu[X_e]) \right| \lesssim \fr{1}{n}
\end{equation}
as well, where we remind the reader that
\begin{equation}
    \Psi_{\bm\beta}(p) = \sum_{i=0}^K 2 |E_i| \beta_i p^{|E_i|-1}.
\end{equation}

Let us now consider the phase coexistence regime, where we assume that each $G_i$ is a forest.
There are at most $O(n^{|V_i|-3})$ terms in the sum \eqref{eq:w1nsum} which correspond to maps $\sigma : V_i \to [n]$ such that the
edges $\{\sigma_u, \sigma_v\}$ for $\{u,v\} \in E_i$ \emph{do not} span a forest, since any such embedding must not be injective,
and thus span only $|V_i|-3$ vertices other than the ones in $e$.
As for the terms for which these edges \emph{do} span a forest, we may invoke part (b) of Proposition \ref{prop:Eproduct} to obtain
\eqref{eq:w1bound2} in this case as well.

Now let us apply $\phi$ to all quantities involved in \eqref{eq:w1bound1} and \eqref{eq:w1bound2} and use
the smoothness of this function.
For convenience, write $Y_e = \phi(\partial_e H(X)) = \E_\mu[X(e) | X(e') \text{ for } e' \neq e]$, so that
$\E_\mu[X(e)] = \E_\mu[Y_e]$.
Then by the preceding discussion we see that, if $\alpha$ is chosen large enough, we have
\begin{equation}
    \left| Y_e - \phi(\Psi_{\bm\beta}(\E_\mu[Y_e]))\right| \leq C \sqrt{\fr{\log n}{n}}
\end{equation}
with probability at least $1 - \fr{1}{n^3}$, for some $C > 0$.

Now, since $p^*$ is a fixed point of $p \mapsto \phi(\Psi_{\bm\beta}(p))$, and moreover it is the \emph{only}
fixed point under consideration since we are restricting to $B_\eta$ with $\eta$ small enough that
no other fixed point (which must be a constant distance away from $p^*$) can play a role, we must have
\begin{equation}
\label{eq:w1bound3}
    \left|Y_e - p^*\right| \leq C \sqrt{\fr{\log n}{n}}
\end{equation}
with probability at least $1 - \fr{1}{n^3}$.
The above difference is exactly the probability that $X(e) \neq Z(e)$ under the coupling described above,
and so taking a union bound over the event that \eqref{eq:w1bound3} fails, which happens with probability at most $\fr{1}{n^3}$,
and the event that even if \eqref{eq:w1bound3} holds the two graphs do not agree at $e$, we have
\begin{equation}
    \P[X(e) \neq Z(e)] \leq C \sqrt{\fr{\log n}{n}} + \fr{1}{n^3} + e^{-\Omega(n)},
\end{equation}
the last term being the probability that condition \eqref{eq:conditional_hardwall} fails.
So, summing over all $\binom{n}{2}$ edges we obtain the result of Theorem \ref{thm:wasserstein}.

One interesting consequence of this proof is the following approximation of the marginal at a single edge.
This is analogous to \cite[Lemma 7.1]{ganguly2024sub}.

\begin{proposition}
\label{prop:marginal}  
If $|M_{\bm\beta}| = 1$ or all graphs $G_0, \dotsc, G_K$ in the ERGM specification are forests, then
for any $p^* \in U_{\bm\beta}$ there is some small enough $\eta > 0$ such that for any edge $e \in \binom{[n]}{2}$,
\begin{equation}
    \left|\E_\mu[X(e)] - p^* \right| \lesssim \sqrt{\fr{\log n}{n}}.
\end{equation}
Here $\mu$ denotes the ERGM measure conditioned on $\allball$.
\end{proposition}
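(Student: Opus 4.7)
The plan is to extract this estimate directly from the chain of inequalities already established in the course of proving Theorem \ref{thm:wasserstein}, simply by taking a $\mu$-expectation at the right step. Set
\begin{equation}
Y_e \coloneqq \E_\mu[X(e) \mid X(e') : e' \neq e],
\end{equation}
so that $\E_\mu[X(e)] = \E_\mu[Y_e]$ by the tower property, and recall that outside an $e^{-\Omega(n)}$-probability event (caused by the hard-wall mechanism near the boundary of $\ball$), we have $Y_e = \phi(\partial_e H(X))$. The two key ingredients, both already derived in the proof of Theorem \ref{thm:wasserstein}, are: applying Theorem \ref{thm:lipschitzconcentration} to the observable $\partial_e H(X)$ --- which is $v$-Lipschitz with $\|v\|_\infty \lesssim 1/n$ and $\|v\|_1 \lesssim 1$ --- at deviation scale $\alpha\sqrt{(\log n)/n}$ for a sufficiently large constant $\alpha$, yielding concentration on a $(1-n^{-3})$-probability event; and Proposition \ref{prop:Eproduct}, whose hypotheses precisely match those of Proposition \ref{prop:marginal}, yielding $|\E_\mu[\partial_e H(X)] - \Psi_{\bm\beta}(\E_\mu[Y_e])| \lesssim 1/n$. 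Combining these with the smoothness of $\phi$ produces
\begin{equation}
|Y_e - \phi(\Psi_{\bm\beta}(\E_\mu[Y_e]))| \leq C\sqrt{\fr{\log n}{n}}
\end{equation}
on an event of $\mu$-probability at least $1 - n^{-3} - e^{-\Omega(n)}$.

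Next I would take the $\mu$-expectation of the preceding inequality. Using the trivial bound $|Y_e| \leq 1$ to control the contribution from the complementary low-probability event, the triangle inequality gives
\begin{equation}
|\E_\mu[Y_e] - \phi(\Psi_{\bm\beta}(\E_\mu[Y_e]))| \lesssim \sqrt{\fr{\log n}{n}}.
\end{equation}
By the symmetry of $\mu$ under permutations of $[n]$, the quantity $\E_\mu[Y_e] = \E_\mu[X(e)]$ coincides with the expected edge density of $X \sim \mu$, which lies in $[p^* - \eta, p^* + \eta]$ because $X \in \ball$ almost surely (instantiate the cut-distance definition with $S = T = [0,1]$).

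The only remaining subtlety, and the main point at which we use $p^* \in U_{\bm\beta}$ rather than merely $p^* \in M_{\bm\beta}$, is the conversion of this approximate fixed-point relation into a bound on $|\E_\mu[Y_e] - p^*|$. As discussed in Section \ref{sec:review_ergm}, the condition $L_{\bm\beta}''(p^*) < 0$ is equivalent to $(\phi \circ \Psi_{\bm\beta})'(p^*) < 1$, so the map $p \mapsto p - \phi(\Psi_{\bm\beta}(p))$ has strictly positive derivative at $p^*$. Shrinking $\eta$ if necessary so that this derivative is bounded below by a positive constant on $[p^*-\eta, p^*+\eta]$, the mean value theorem yields a two-sided estimate $|p - \phi(\Psi_{\bm\beta}(p))| \gtrsim |p - p^*|$ throughout that interval, and substituting $p = \E_\mu[Y_e]$ delivers the claim.
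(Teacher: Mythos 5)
Your proof is correct and follows the route the paper has in mind for this proposition, which is stated without its own proof as a ``consequence'' of the argument for Theorem \ref{thm:wasserstein}. The simplest reading of the paper's intention is: take \eqref{eq:w1bound3} (namely $|Y_e - p^*| \lesssim \sqrt{(\log n)/n}$ with probability at least $1 - n^{-3}$), bound the complementary event by $|Y_e - p^*| \leq 1$, and take expectations. Your version steps back one line further, to the bound $|Y_e - \phi(\Psi_{\bm\beta}(\E_\mu[Y_e]))| \lesssim \sqrt{(\log n)/n}$, takes expectations \emph{there}, and then converts the resulting approximate fixed-point relation for the deterministic quantity $\E_\mu[Y_e]$ into the desired estimate via the mean-value theorem and the nondegeneracy condition $(\phi\circ\Psi_{\bm\beta})'(p^*) < 1$. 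This is a real improvement in rigor: the paper's passage from the approximate fixed-point relation to \eqref{eq:w1bound3} is justified only by the informal remark that ``$p^*$ is the only fixed point under consideration,'' whereas your derivative-based two-sided bound $|p - \phi(\Psi_{\bm\beta}(p))| \gtrsim |p - p^*|$ on $[p^*-\eta, p^*+\eta]$ makes that step precise, and simultaneously gives the clean observation that $\E_\mu[Y_e]$ lies in that interval because $X \in \allball$ almost surely. In short, your argument proves the proposition directly and, as a by-product, supplies the missing justification for the final deduction in the paper's proof of Theorem \ref{thm:wasserstein}.
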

\subsection{A central limit theorem for small subcollections of edges}
\label{sec:applications_clt}

In this section we prove a central limit theorem for certain small subcollections of edges,
extending \cite[Theorem 2]{ganguly2024sub} both to the supercritical regime from the subcritical regime,
and also allowing for the consideration of collections of edges around a single vertex, in addition to
collections of edges not sharing a vertex.
Our proof also has the benefit of not relying on the FKG inequality.
This result was informally stated in the introduction as Theorem \ref{thm:clt_informal}.

\begin{theorem}
\label{thm:clt}
Let $p^* \in U_{\bm\beta}$ and let $\mu$ denote the ERGM measure conditioned on $B_\eta^\square(p^*)$,
for some small enough $\eta > 0$.
Suppose that we have a sequence of collections of distinct edges $\{e_i^n\}_{i=1}^m \sse \binom{[n]}{2}$,
with $m = m_n$ satisfying $1 \ll m_n \ll n$ as $n \to \infty$, such that either of the following conditions hold:
\begin{enumerate}[label=(\alph*)]
    \item For each $n$, the edges $e_1^n, \dotsc, e_m^n$ do not share a vertex.
    \item For each $n$, the edges $e_1^n, \dotsc, e_m^n$ are all incident on a single vertex.
\end{enumerate}
Now define
\begin{equation}
    S_n = \fr{X(e_1^n) + \dotsb + X(e_m^n) - \E_\mu[X(e_1^n) + \dotsb + X(e_m^n)]}
    {\sqrt{\Var_\mu[X(e_1^n) + \dotsb + X(e_m^n)]}}.
\end{equation}
Then we have $S_n \dto \mathcal{N}(0,1)$ under $\mu$ as $n \to \infty$.
\end{theorem}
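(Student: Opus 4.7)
I would prove Theorem \ref{thm:clt} by the method of moments, showing that $\E_\mu[S_n^k] \to (k-1)!! \cdot \mathbf{1}_{k \text{ even}}$ for each fixed $k \geq 1$, which matches the moments of a standard Gaussian. The two key analytic ingredients are the covariance bound in Lemma \ref{lem:cov} and the multilinear moment estimate in Proposition \ref{prop:Eproduct}. Observe that in both case (a) and case (b), the collection $\{e_i^n\}_{i=1}^m$ spans a forest in $K_n$ (a matching or a star, respectively), so part (b) of Proposition \ref{prop:Eproduct} applies uniformly, even in the phase coexistence regime where the FKG input is unavailable. Writing $p_n = \E_\mu[X(e)]$, which by the vertex-symmetry of $\mu$ does not depend on $e$, and setting $Y_i = X(e_i^n) - p_n$, one first expands
\begin{equation}
\sigma_n^2 = m \cdot p_n(1-p_n) + \sum_{i \neq j} \Cov_\mu[X(e_i^n), X(e_j^n)];
\end{equation}
Lemma \ref{lem:cov} bounds the covariance sum by $O(m^2/n^2)$ in case (a) and $O(m^2/n)$ in case (b), both of which are $o(m)$ under $m \ll n$. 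Combined with Proposition \ref{prop:marginal}, this gives $\sigma_n^2 = m p^*(1-p^*)(1+o(1))$.

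\textbf{Moment expansion.} For the higher moments, I would expand
\begin{equation}
\E_\mu\left[\Big(\sum_i Y_i\Big)^k\right] = \sum_{\pi} \sum_{(i_1, \dotsc, i_k)} \E_\mu\left[\prod_j Y_{i_j}\right],
\end{equation}
with the outer sum over partitions $\pi$ of $[k]$ and the inner sum over ordered tuples having equality pattern $\pi$. A partition with $r$ blocks of sizes $a_1, \dotsc, a_r$ contributes $\sim m^r$ tuples. For each such tuple, the Bernoulli identity $Y_i^a = \alpha_a Y_i + \E_\mu[Y_i^a]$, where $\alpha_a = (1-p_n)^a - (-p_n)^a$, reduces the computation to products of the $Y_{i(\ell)}$; expanding and invoking Proposition \ref{prop:Eproduct} on the surviving cross terms yields
\begin{equation}
\E_\mu\left[\prod_{\ell=1}^r Y_{i(\ell)}^{a_\ell}\right] = \prod_{\ell=1}^r \E_\mu[Y_{i(\ell)}^{a_\ell}] + O_k(1/n),
\end{equation}
uniformly over distinct indices $i(1), \dotsc, i(r)$. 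Since $\E_\mu[Y_i] = 0$, any partition with a singleton block contributes zero at leading order, and the surviving main term comes from pair partitions, of which there are $(k-1)!!$ when $k$ is even; these produce exactly $(k-1)!! \cdot (m p^*(1-p^*))^{k/2}$. Partitions in which every block has size $\geq 2$ but some block has size $\geq 3$ have $r \leq (k-1)/2$ blocks and are of strictly lower order, so after dividing by $\sigma_n^k$ one recovers the Gaussian limit.

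\textbf{Error control and main obstacle.} The most delicate step is to verify that the $O_k(1/n)$ errors above, summed over all $\sim m^r$ tuples for each partition, remain negligible compared to $\sigma_n^k \sim m^{k/2}$. A crude union bound yields a total error of size $O(m^k/n)$, which is only guaranteed to be $o(m^{k/2})$ under the stronger hypothesis $m \ll n^{2/k}$, a condition which degrades with $k$. Overcoming this, following the strategy of \cite{ganguly2024sub}, requires a refined partition-by-partition analysis that effectively passes from joint moments to joint cumulants of distinct edges, where cancellations produce extra smallness beyond the worst-case bound of Proposition \ref{prop:Eproduct}. This is the main obstacle I anticipate: leveraging the forest structure of hypotheses (a) and (b) together with the telescoping proof of Proposition \ref{prop:Eproduct} to extract the additional decay in $n$ needed for each fixed $k$ under the sole hypothesis $m \ll n$. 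Once established, the resulting moment convergence implies $S_n \dto \mathcal{N}(0,1)$ by the classical method of moments.
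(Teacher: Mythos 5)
Your skeleton is correct — method of moments, variance asymptotics $\sigma_n^2 \sim m p^*(1-p^*)$ via Lemma~\ref{lem:cov}, partition expansion — and you correctly identify the precise obstruction: a uniform $O_k(1/n)$ error per tuple, times the $\Theta(m^r)$ tuples in a partition with $r$ blocks, only controls the $k$th moment under $m \ll n^{2/k}$ because singleton-heavy partitions have $r$ up to $k$. But you stop exactly where the real work begins. The missing piece is a bound whose decay in $n$ improves with the number of \emph{distinct first-power} factors, and obtaining it is the technical heart of the paper's proof. Specifically, the paper proves (Proposition~\ref{prop:64}, the analogue of Proposition~6.4 of \cite{ganguly2024sub}) that for edges that are either vertex-disjoint or all incident on a single vertex,
\begin{equation}
\left| \E_\mu\!\left[ \prod_{j=1}^{m'} \tilde X(e_j)^{a_j} \,\middle|\, X(e_1'),\dotsc,X(e_l') \right] \right| \lesssim n^{-\left(m' - (a_1+\dotsb+a_{m'})/2\right)},
\end{equation}
so the all-first-powers term gets a bound of $n^{-m'/2}$, not $n^{-1}$. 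With this, the all-singleton partition contributes at most $m^k n^{-k/2} = (m/n)^{k/2} m^{k/2} = o(m^{k/2})$ under the stated hypothesis $m \ll n$, closing the gap you flagged.

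The proof of that proposition cannot be recovered from the telescoping argument in Proposition~\ref{prop:Eproduct} alone. It proceeds by induction on $m'$: when some $a_i>1$ one conditions on $X(e_i)$ and applies the inductive hypothesis; in the crucial all-first-powers case one applies the initially Gaussian concentration of Theorem~\ref{thm:lipschitzconcentration} to a maximal matching (case (a)) or maximal star (case (b)) sum, getting $\E_\mu[(\sum_i \tilde X(e_i'))^{m'} \mid \cdots] \lesssim n^{m'/2}$, and then expands this power, controls mixed-power terms by the inductive hypothesis and crude counting, and is left with a sum of $\Omega(n^{m'})$ all-first-power terms. The reason the hypotheses (a) and (b) enter is exactly here: all these all-first-power terms are \emph{equal} by symmetry, since any $m'$-subset of the matching/star has identical graph structure, so dividing the overall $\lesssim n^{m'/2}$ bound by the count yields $n^{-m'/2}$ per term. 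This symmetry step has no counterpart in your proposal, and without it the extra decay in $n$ is unavailable. (Note also that the paper first needs Proposition~\ref{prop:62}, an FKG-free conditional-independence estimate on forest-spanning edges, to enable the conditioning and induction — but that follows readily from Proposition~\ref{prop:Eproduct}(b), which you have correctly identified as applicable in both cases.)
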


We remark that the restriction to a simple subcollection of edges has been removed in a certain very-high temperature
setting, the \emph{Dobrushin uniqueness regime}, in \cite{fang2024normal}.
However, it is unclear if the methods there extend even to the full subcritical regime.

The proof of Theorem \ref{thm:clt} follows that of \cite[Theorem 2]{ganguly2024sub} closely,
so we just give an overview and point out where changes must be made.
We remark that the two cases (a) and (b) are proved identically, since the symmetry considerations we will use later hold for both cases.
The proof is based on the moment method, which requires various estimates on multi-way correlation functions.
First, we need the following quantitative independence result, analogous to \cite[Proposition 6.2]{ganguly2024sub}.

\begin{proposition}
\label{prop:62}
For any fixed positive integer $k$ and $a_1, \dotsc, a_k, b_1, \dotsc, b_k \in \{0,1\}$, the following holds
for any distinct edges $e, e_1, \dotsc, e_k$ which form a forest in $K_n$:
\begin{equation}
    \Big|
        \P_\mu[X(e) = 1 | X(e_j) = a_j \text{ for } 1 \leq j \leq k]
        - \P_\mu[X(e) = 1 | X(e_j) = b_j \text{ for } 1 \leq j \leq k]
    \Big| \lesssim \fr{1}{n}.
\end{equation}
\end{proposition}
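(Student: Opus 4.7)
The plan is to express the conditional probability as the ratio of two joint probabilities and expand each as a linear combination of joint moments of the variables $X(e), X(e_1), \dotsc, X(e_k)$, which can then be controlled using Proposition \ref{prop:Eproduct}(b). First I would write
\begin{equation}
    \ind{X(e_j) = a_j} = (1-a_j) + (2a_j - 1) X(e_j),
\end{equation}
so that
\begin{equation}
    \prod_{j=1}^k \ind{X(e_j) = a_j} = \sum_{S \sse [k]} C_S(\bm{a}) \prod_{j \in S} X(e_j),
\end{equation}
where $C_S(\bm{a}) = \prod_{j \notin S}(1-a_j) \prod_{j \in S}(2a_j - 1)$ satisfies $|C_S(\bm{a})| \leq 1$.

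Taking expectations and applying Proposition \ref{prop:Eproduct}(b) to each subforest $\{e_j : j \in S\}$, I obtain
\begin{equation}
    \P_\mu[X(e_j) = a_j \text{ for all } j] = \sum_S C_S(\bm{a}) \left(p_n^{|S|} + O(1/n)\right) = Q(\bm{a}) + O(1/n),
\end{equation}
where $p_n \coloneqq \E_\mu[X(e)]$ (a quantity independent of $e$ by the vertex-permutation invariance of $\mu$) and $Q(\bm{a}) \coloneqq \prod_j p_n^{a_j}(1-p_n)^{1-a_j}$. Here I used that the inner sum over $S$ collapses to the product $\prod_j [(1-a_j) + (2a_j-1)p_n]$. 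Since $\{e\} \cup \{e_j : j \in S\}$ is also a subforest of the given forest for every $S$, Proposition \ref{prop:Eproduct}(b) similarly yields
\begin{equation}
    \P_\mu[X(e) = 1,\, X(e_j) = a_j \text{ for all } j] = p_n \cdot Q(\bm{a}) + O(1/n).
\end{equation}

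Dividing, the conditional probability becomes
\begin{equation}
    \P_\mu[X(e) = 1 \mid X(e_j) = a_j \text{ for all } j] = \frac{p_n Q(\bm{a}) + O(1/n)}{Q(\bm{a}) + O(1/n)} = p_n + O(1/n),
\end{equation}
and since the same expression (with the same value $p_n$ and the same order $O(1/n)$ error) holds with $\bm{b}$ replacing $\bm{a}$, the conclusion follows by the triangle inequality. The ratio step is uniform in $\bm{a}$ because the $O(1/n)$ error bounds depend only on $k$ and on a lower bound for $Q(\bm{a})$.

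The main obstacle is verifying that $Q(\bm{a})$ is bounded away from zero, uniformly in $n$ and $\bm{a}$: this requires keeping $p_n$ a constant distance away from $\{0,1\}$, after which $Q(\bm{a}) \geq \min(p_n, 1-p_n)^k \gtrsim 1$ since $k$ is fixed. To establish this I would combine the symmetry identity $p_n = \E_\mu[E(X)]/\binom{n}{2}$ with the large deviations principle (Theorem \ref{thm:LDP}): conditioning on $\ball$ forces the edge density to lie within $O(\eta)$ of $p^*$ in expectation, so $p_n$ sits in a small neighborhood of $p^* \in (0,1)$ provided $\eta$ is chosen small enough (which is already part of the standing setup). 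This suffices to make the division step above legitimate and completes the proposal.
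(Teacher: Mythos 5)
Your proof is correct and follows essentially the same strategy as the paper: both reduce the statement to Proposition~\ref{prop:Eproduct}(b) and both must control the conditioning probability (the ``denominator'') by showing it is bounded below, which in both cases comes down to $\E_\mu[X(e)]$ being bounded away from $\{0,1\}$. The one genuine difference in route is this: the paper reduces to bounding $\left|\Cov_\mu\left[X(e), \prod_j Y_j\right]\right|$ where $Y_j \in \{X(e_j), 1-X(e_j)\}$, and then bounds the resulting terms by explicitly invoking a \emph{condition from the proof} of Proposition~\ref{prop:Eproduct}(b), namely \eqref{eq:covcondition}; whereas you expand both joint probabilities into pure moments $\E_\mu\left[\prod_{j\in S}X(e_j)\right]$ via the affine identity $\ind{X(e_j)=a_j} = (1-a_j) + (2a_j-1)X(e_j)$ and then apply the \emph{statement} of Proposition~\ref{prop:Eproduct}(b) as a black box. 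The two are logically equivalent (indeed \eqref{eq:covcondition} follows from the statement of Proposition~\ref{prop:Eproduct}(b) by the same telescoping you implicitly use), but your version is slightly cleaner since it does not reach inside the proof of an already-stated proposition. One small stylistic note: invoking Theorem~\ref{thm:LDP} to pin down $p_n$ near $p^*$ is unnecessary overhead — since $\mu$ is conditioned on $\allball$, the edge density of $X$ is \emph{deterministically} within $O(\eta)$ of $p^*$, and $p_n = \E_\mu[X(e)]$ inherits this by symmetry; no large deviations input is needed. (The paper also cites Theorem~\ref{thm:LDP} at this step, so this is a shared minor inefficiency rather than an error on your part.)
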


The proof of \cite[Proposition 6.2]{ganguly2024sub} relied on the FKG inequality as well as an input from \cite{bhamidi2008mixing}
which was proved there only in the subcritical regime.
Therefore we present here an alternative proof using a similar strategy but avoiding these inputs.

\begin{proof}[Proof of Proposition \ref{prop:62}]
By the triangle inequality, it suffices to show that 
\begin{equation}
\label{eq:62goal}
    \left|
        \P_\mu[X(e)=1 | X(e_j) = a_j \text{ for } 1 \leq j \leq k]
        - \P_\mu[X_j = 1]
    \right| \lesssim \fr{1}{n}.
\end{equation}
Now note that Proposition \ref{prop:Eproduct} combined with Theorem \ref{thm:LDP} implies that
\begin{equation}
\label{eq:productlimit}
    \P_\mu[X(e_j) = a_j \text{ for } 1 \leq j \leq k] \xrightarrow[\;n \to \infty\;]{} (p^*)^{\sum_i a_i} (1-p^*)^{k-\sum_i a_i},
\end{equation}
since we can express the probability here as a sum of expected values of products of edge variables, all of which span a forest in $K_n$,
allowing us to apply Proposition \ref{prop:Eproduct} and then Theorem \ref{thm:LDP} which implies that each 
$\E_\mu[X(e)] \to p^*$ as $n \to \infty$.
Using \eqref{eq:productlimit} to bound the denominator implicit in \eqref{eq:62goal} by a constant above and below,
to prove \eqref{eq:62goal} it thus suffices to show that
\begin{equation}
    \left|
        \P_\mu[X(e) = 1, X(e_j) = a_j \text{ for } 1 \leq j \leq k] -
        \P_\mu[X(e) = 1] \cdot \P_\mu[X(e_j) = a_j \text{ for } 1 \leq j \leq k]
    \right| \lesssim \fr{1}{n}.
\end{equation}
Now the left-hand side can be written as the absolute value of
\begin{equation}
        \Cov_\mu\left[X(e), \prod_{j=1}^k Y_j \right],
\end{equation}
where $Y_j = X(e_j)$ if $a_j = 1$ and $Y_j = 1-X(e_j)$ if $a_j = 0$.
We can expand this covariance using linearity in the second input, and find that it is a sum of at most $2^k$ terms of the form
\begin{equation}
    \Cov_\mu\left[X(e), \prod_{i=1}^\ell X(e_{j_i}) \right].
\end{equation}
But all of these terms are $\lesssim \fr{1}{n}$ in absolute value, by the proof of Proposition \ref{prop:Eproduct},
part (b) (recall that the proof involved showing condition \eqref{eq:covcondition}).
This finishes the proof.
\end{proof}

Using Proposition \ref{prop:62} as well as Theorem \ref{thm:lipschitzconcentration}, we can prove the following estimate
on multi-way correlation functions, analogous to \cite[Propostion 6.4]{ganguly2024sub}.
For ease of notation, define $\tilde{X}(e) = X(e) - \E_\mu[X(e)]$.

\begin{proposition}
\label{prop:64}
For any fixed integers $l, m \geq 0$ and $a_1, \dotsc, a_m \geq 1$ with $a_1 + \dotsb + a_m \leq 2 m$, there
is a constant $C_{l,a_1,\dotsc,a_m}$ such that the following holds for sufficiently large $n$.
For any set of edges $e_1, \dotsc, e_m, e_1', \dotsc, e_l'$ which are either vertex-disjoint or all incident on a single
vertex, we have
\begin{equation}
    \left|
        \E_\mu \left[
            \prod_{j=1}^m \tilde{X}(e_j)^{a_j}
            \,\middle|\,
            X(e_1'), \dotsc, X(e_l')
        \right]
    \right| \leq \fr{C_{l,a_1,\dotsc,a_m}}{n^{m - (a_1 + \dotsb + a_m)/2}}.
\end{equation}
\end{proposition}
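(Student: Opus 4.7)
\textbf{Plan for proof of Proposition \ref{prop:64}.}
The argument extends that of \cite[Proposition 6.4]{ganguly2024sub} to the supercritical setting, with Proposition \ref{prop:62} playing the role of the key weak-dependence input. The hypothesis that $e_1,\dotsc,e_m,e_1',\dotsc,e_l'$ are either vertex-disjoint or all incident on a single vertex ensures that any subcollection of them is a forest in $K_n$, so Proposition \ref{prop:62} is applicable throughout. I induct on $s := |\{j : a_j = 1\}|$, the number of ``light'' indices. The base case $s=0$ is immediate: the constraints $a_j \geq 2$ and $T := \sum_j a_j \leq 2m$ force $a_j = 2$ for all $j$, so the target exponent $m - T/2 = 0$ and the bound follows from $|\tilde{X}(e_j)| \leq 1$.

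For the inductive step with $s \geq 1$, assume $a_1 = 1$ without loss of generality, set $Y := \prod_{j \geq 2}\tilde{X}(e_j)^{a_j}$ and $\mathcal{F} := \sigma(X(e_1'),\dotsc,X(e_l'))$, and write, via the tower property,
\begin{equation}
\E_\mu[\tilde{X}(e_1) Y \mid \mathcal{F}] = \E_\mu[\alpha \cdot Y \mid \mathcal{F}], \qquad \alpha := \E_\mu[\tilde{X}(e_1) \mid \mathcal{F},\,X(e_2),\dotsc,X(e_m)].
\end{equation}
Proposition \ref{prop:62}, combined with the averaging argument used in its proof (comparing to the unconditioned mean by total expectation over the values of the conditioned variables), yields $|\alpha| \leq C/n$ uniformly. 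Decompose $\alpha = \bar\alpha + \eta$, where $\bar\alpha := \E_\mu[\tilde{X}(e_1) \mid \mathcal{F}]$ is deterministic given $\mathcal{F}$ and satisfies $|\bar\alpha| \lesssim 1/n$, and $\eta$ has conditional mean zero on $\mathcal{F}$ with $|\eta| \lesssim 1/n$. The ``main'' term $\bar\alpha \cdot \E_\mu[Y \mid \mathcal{F}]$ is handled by the inductive hypothesis applied to $Y$ (which has $m' = m-1$ factors and $T' = T - 1$, so $m' - T'/2 = m - T/2 - 1/2$), producing a product bound of order $1/n^{m-T/2+1/2}$, better than the target.

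The hard part is controlling the correction $\E_\mu[\eta Y \mid \mathcal{F}]$. A straightforward Cauchy--Schwarz bound fails: it gives only $|\E_\mu[\eta Y \mid \mathcal{F}]| \leq (C/n)\sqrt{\E_\mu[Y^2\mid\mathcal{F}]}$, and since $Y^2$ has no light indices its conditional expectation is $O(1)$, yielding only a $1/n$ factor rather than the required $1/n^{m-T/2}$. To do better, I ``pair off'' another light factor: assuming $s \geq 2$, say $a_2 = 1$, I write $Y = \tilde{X}(e_2) Y'$ and apply the tower property a second time,
\begin{equation}
\E_\mu[\eta \tilde{X}(e_2) Y' \mid \mathcal{F}] = \E_\mu\bigl[\,Y'\cdot \E_\mu[\eta \tilde{X}(e_2) \mid \mathcal{F},X(e_3),\dotsc,X(e_m)]\,\bigm|\,\mathcal{F}\bigr].
\end{equation}
The inner expectation can be controlled by an iterated use of Proposition \ref{prop:62}: since both $\eta$ and $\tilde{X}(e_2)$ are individually $O(1/n)$ functions of $X(e_2)$ (with $\eta$ having zero mean in $X(e_2)$ after averaging, thanks to the mean-zero property inherited from the outer decomposition), the product contributes an extra $1/n$, for a total of $1/n^2$ from the light pair. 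Iterating this ``pair-peeling'' $\lfloor s/2 \rfloor$ times---with the possible leftover light factor handled by a final single-peel giving one more $1/n$---yields a total bound of $C/n^{\lceil s/2 \rceil}$. Since $m - T/2 \leq s/2$ always (with equality in the extremal case where all heavy indices have $a_j = 2$), this proves the claim.

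The main obstacle will be the clean execution of the pair-peeling step, and in particular verifying that the forest structure is preserved at each stage (so that Proposition \ref{prop:62} remains applicable) and that the constants accumulated through the iteration depend only on $l$ and $\vec{a}$, not on $n$. The symmetric structure of the configurations considered (matching or star) ensures the forest condition is automatic throughout.
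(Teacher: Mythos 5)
Your base case and heavy-index peeling are fine (and match the paper's handling of those parts), and the numerology $m - T/2 \le s/2$ is correct. The gap is in the pair-peeling step, and it is fatal.

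You claim that $\gamma := \E_\mu[\eta\,\tilde{X}(e_2)\mid\mathcal{F},X(e_3),\dotsc,X(e_m)]$ is $O(1/n^2)$, on the grounds that ``both $\eta$ and $\tilde{X}(e_2)$ are individually $O(1/n)$ functions of $X(e_2)$.'' This is false for $\tilde{X}(e_2)$: it takes the values $1-\bar p$ and $-\bar p$ with $\bar p = \E_\mu[X(e_2)]$, which are $\Theta(1)$. Expanding $\gamma$ over the two values of $X(e_2)$ and writing $q = \P[X(e_2)=1\mid\mathcal{F},X(e_3),\dotsc]$, $\eta_b = \eta\mid_{X(e_2)=b}$, one finds
\begin{equation}
\gamma = \eta_1\,(q-\bar p) \;+\; (1-q)\,\bar p\,(\eta_1-\eta_0).
\end{equation}
The first term is $O(1/n^2)$ since $|\eta_1|\lesssim 1/n$ and $|q-\bar p|\lesssim 1/n$ by Proposition \ref{prop:62}, but the second term is $\Theta(1)\cdot O(1/n) = O(1/n)$: the fluctuation $\eta_1 - \eta_0$ is of order $1/n$ (that is exactly Proposition \ref{prop:62}), and there is no mechanism to make it $o(1/n)$. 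Equivalently, $\Cov[\eta,\tilde X(e_2)\mid\cdots]$ is bounded by $\sqrt{\Var[\eta]\Var[\tilde X(e_2)]} = O(1/n)\cdot O(1) = O(1/n)$ and is generically of this order. So the pair contributes only $1/n$, not $1/n^2$, and your claimed bound $C/n^{\lceil s/2\rceil}$ does not follow. Moreover, even after extracting the deterministic part of $\gamma$, the random remainder is again of size $1/n$ and depends on $X(e_3),\dotsc,X(e_m)$, so the recursion does not close.

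This is precisely why the paper does not attempt to handle the all-light case ($a_j=1$ for all $j$) by iterated conditioning. The paper's proof uses a genuinely different ingredient for that case: it forms the sum $\sum_{i} \tilde{X}(e_i')$ over a maximal family of $\Theta(n)$ vertex-disjoint (or star) edges containing the $e_j$'s, applies Theorem \ref{thm:lipschitzconcentration} to obtain Gaussian moment control $\E_\mu[(\sum\tilde{X}(e_i'))^m\mid\mathcal{F}]\lesssim n^{m/2}$, then expands the $m$th power, bounds the non-extremal terms by induction and counting, and finally uses the symmetry hypothesis (vertex-disjoint or common-vertex, ensuring all residual $m$-point correlation terms are identical) to extract the $O(n^{-m/2})$ bound for each one. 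Pairwise decorrelation estimates like Proposition \ref{prop:62} alone, no matter how they are composed via conditioning, cannot recover the $n^{-1/2}$-per-light-factor rate; the global Gaussian concentration and the symmetry argument are essential inputs that your proposal omits.
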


\begin{proof}[Proof sketch for Proposition \ref{prop:64}]
As essentially the same proof from \cite[Proposition 6.4]{ganguly2024sub} goes through here, we just give a sketch,
pointing the interested reader to that reference for details.

The proof proceeds by induction on $m$.
For $m=1$ it is immediate from Proposition \ref{prop:62} for $a_1 = 1$, and the case of $a_1 = 2$ follows because
$|\tilde{X}(e)| \leq 1$.
For larger values of $m$, if $a_i > 1$ for any $i = 1, \dotsc, m$, then one can use the tower property and condition
on $X(e_i)$: then the sum of $a_j$ for $j \neq i$ is at most $2(m-1)$, allowing the inductive hypothesis to apply.
We obtain
\begin{equation}
    \left|\E_\mu \left[ \tilde{X}(e_i)^{a_i} \cdot \E_\mu \left[ \prod_{j \neq i} \tilde{X}(e_j)^{a_j} \,\middle|\, X(e_i), X(e_1'), \dotsc, X(e_l') \right] \right] \right|
    \leq \fr{C_{l+1,a_1,\dotsc,a_{i-1},a_{i+1},a_m}}{n^{(m-1)-(a_1+\dotsb+a_{i-1}+a_{i+1}+\dotsb+a_m)/2}},
\end{equation}
since $|\tilde{X}(e_i)| \leq 1$.
Since $a_i \geq 2$, the right-hand side is of the desired form, finishing the proof in this case.

The tricky case is when $a_j = 1$ for all $j = 1, \dotsc, m$, in other words showing that
\begin{equation}
\label{eq:64goal}
    \left| \E_\mu \left[\tilde{X}(e_1) \dotsb \tilde{X}(e_m) \,\middle|\, X(e_1'), \dotsc, X(e_l') \right] \right| \lesssim \fr{1}{n^{m/2}}.
\end{equation}
In this case, we begin by instead considering one of the following two sums, depending on whether
our edges are vertex-disjoint (case (a)), or all incident on a single vertex (case (b)).
\begin{equation}
    \text{Case (a):} \quad \sum_{i=l+1}^{\lfloor n/2 \rfloor} \tilde{X}(e_i'),
    \qquad \qquad \qquad
    \text{Case (b):} \quad \sum_{i=l+1}^{n-1} \tilde{X}(e_i').
\end{equation}
Here, in case (a), we assume that $\{ e_i' \}_{i=1}^{\lfloor n/2 \rfloor}$ is a maximal collection of vertex-disjoint
edges containing our set $\{ e_1, \dotsc, e_m, e_1', \dotsc, e_l' \}$ (so that in particular each $e_j$ appears as
some $e_i'$ for $i > l$).
In case (b), we assume that we instead have a maximal collection of edges incident on a single vertex, with the same
other properties.

In either case, we use the initially Gaussian concentration afforded by Theorem \ref{thm:lipschitzconcentration}
for the sum, which is $v$-Lipschitz with $\| v \|_\infty = 1$ and $\| v \|_1 \lesssim n$ in either case, meaning
that we do in fact get Gaussian concentration at scale $\sqrt{n}$, for tails of depth up to scale $n$.
This allows us to estimate the moments of the sum, after using Proposition \ref{prop:62} to compare the probabilities
with and without conditioning on $X(e_1'), \dotsc, X(e_l')$.
This yields
\begin{equation}
\label{eq:gaussianmoments}
    \left|
        \E_\mu \left[
            \left(
                \tilde{X}(e_{l+1}') + \dotsb + \tilde{X}(e_{\lfloor n/2 \rfloor}')
            \right)^m
            \,\middle|\,
            X(e_1'), \dotsc, X(e_l')
        \right]
    \right| \lesssim n^{m/2},
\end{equation}
where we replace the $\lfloor n/2 \rfloor$ by $n-1$ in case (b).
We can then expand the left-hand side into terms of the form
\begin{equation}
    \E_\mu \left[
        \tilde{X}(e_{j_1})^{c_1} \dotsb \tilde{X}(e_{j_L})^{c_L}
        \,\middle|\,
        X(e_1'), \dotsc, X(e_l')
    \right],
\end{equation}
where $c_1 \geq \dotsb \geq c_L \geq 1$ and $c_1 + \dotsb + c_L = m$,
and consider the terms based on the form of $(c_1, \dotsc, c_L)$.

First, if $c_1 \geq 2$ and $2L \geq m$, then $c_2 + \dotsb + c_L \leq 2(L-1)$, and so we can apply the induction
hypothesis to bound each term, and then count the number of such terms to see that their total sum
is $\lesssim n^{m/2}$.
Next, if $m > 2L$ then the number of terms of this form is bounded by $n^L \leq n^{(m-1)/2}$ and we simply bound
each one by $1$, obtaining another bound of $\lesssim n^{m/2}$ for the overall sum of terms of this type.

Finally, we are left with terms with $c_1 = \dotsb = c_L = 1$, meaning that $L=m$ and these terms are the type
that we wish to bound in \eqref{eq:64goal}.
By \eqref{eq:gaussianmoments} as well as the previous paragraph, the total sum of these terms is again $\lesssim n^{m/2}$.
\emph{Here} is where we use the assumption of vertex-disjointness or all edges being incident on a single vertex.
Either of these assumptions means that all of the terms of the form we wish to bound are \emph{the same}, since there is
only one graph structure that could be spanned by the edges considered.
Thus, since there are $\Omega(n^m)$ such terms, each one has the desired bound, finishing the proof.
\end{proof}

Now, as mentioned earlier, Theorem \ref{thm:clt} is proved using the moment method.
In other words, we must estimate the moments of the sum $\tilde{X}(e_1^n) + \dotsb + \tilde{X}(e_m^n)$,
for $e_1^n, \dotsc, e_m^n$ satisfying the hypothesis of Theorem \ref{thm:clt}.
First, Proposition \ref{prop:64} allows us to extract the leading-order parts of the $k$th moment of the sum,
controlling every term where there are powers of edge variables different from $0$ and $2$,
and we obtain the following result:
\begin{equation}
\label{eq:leading_terms}
    \E_\mu\left[\left(\tilde{X}(e_1^n) + \dotsb + \tilde{X}(e_m^n)\right)^k\right] = o(m^{k/2}) +
    \binom{k}{2,\dotsc,2} \sum_{1 \leq j_1 < \dotsb < j_{k/2} \leq m}
    \E_\mu \left[\prod_{l=1}^{k/2} \tilde{X}(e_{j_l}^n)^2\right].
\end{equation}
We skip the details here as they are identical to those in the proof of \cite[Theorem 2]{ganguly2024sub}.
Notably, this is where we use the condition that $m \ll n$.
Our convention here is that for odd $k$, the multinomial coefficient in \eqref{eq:leading_terms} is zero,
and so for odd $k$ the $k$th moment of the sum is simply $o(m^{k/2})$.

One particular case of \eqref{eq:leading_terms} is the variance of the sum, i.e.\ the $k=2$ case, where we get
\begin{equation}
    \Var_\mu[X(e_1^n) + \dotsb + X(e_m^n)] = o(m) + m \Var_\mu[X(e)]
\end{equation}
by symmetry, where $e$ is an arbitrary edge.
By Theorem \ref{thm:LDP}, we have $\Var_\mu[X(e)] \to p^* (1-p^*)$, and so we have
\begin{equation}
    \Var_\mu[X(e_1^n) + \dotsb + X(e_m^n)] = o(m) + m p^* (1-p^*).
\end{equation}
Now, by symmetry and the fact that there are $\binom{m}{k/2} \sim \fr{m^{k/2}}{(k/2)!}$
terms in the sum, this tells us that
\begin{equation}
    \E_\mu[S_n^k]
    = o(1) + \binom{k}{2,\dotsc,2} \fr{1}{(k/2)!} \cdot
    \fr{\E_\mu\left[\prod_{j=1}^{k/2} \tilde{X}(e_j)^2 \right]}{(p^*(1-p^*))^{k/2}},
\end{equation}
where again the multinomial coefficient is zero for odd $k$, meaning that the odd moments are $o(1)$.
Finally, by Proposition \ref{prop:Eproduct} and Theorem \ref{thm:LDP} again, we have
\begin{equation}
    \E_\mu\left[\prod_{j=1}^{k/2} \tilde{X}(e_j)^2 \right] \to (p^*(1-p^*))^{k/2},
\end{equation}
meaning that the limiting moments of $S_n$ are $0$ when $k$ is odd, and equal to
\begin{equation}
    \binom{k}{2,\dotsc,2} \fr{1}{(k/2)!} = \fr{k!}{2^{k/2} (k/2)!} = (k-1)!!
\end{equation}
when $k$ is even.
These are the moments of $\mathcal{N}(0,1)$, and so we obtain Theorem \ref{thm:clt}.

\bibliography{references}
\bibliographystyle{alpha}

\end{document}